\newcommand{\RR}{\mathbb{R}}
\newcommand{\OO}{\mathcal{O}}
\newcommand{\ZZ}{\mathbb{Z}}
\newcommand{\Hom}{\textnormal{Hom}}
\newcommand{\Ac}{\mathcal{A}}
\newcommand{\Bc}{\mathcal{B}}
\newcommand{\Fc}{\mathcal{F}}
\newcommand{\Tc}{\mathcal{T}}
\newcommand{\Hc}{\mathcal{H}}
\newcommand{\Pc}{\mathcal{P}}
\newcommand{\Coh}{\mathrm{Coh}}
\newcommand{\arinj}{\ar@{^{(}->}}
\newcommand{\arsurj}{\ar@{->>}}
\newcommand{\areq}{\ar@{=}}
\newcommand{\wh}{\widehat}
\newcommand{\ch}{\mathrm{ch}}
\newcommand{\Pic}{\mathrm{Pic}}
\newcommand{\whPhi}{{\wh{\Phi}}}
\newcommand{\wt}{\widetilde}
\newcommand{\bsm}{\begin{smallmatrix}}
\newcommand{\esm}{\end{smallmatrix}}
\newcommand{\Aut}{\mathrm{Aut}}
\newcommand{\Stab}{\mathrm{Stab}}
\newcommand{\Hilb}{\mathrm{Hilb}}
\newtheorem*{rep@theorem}{\rep@title}
\newcommand{\newreptheorem}[2]{%
\newenvironment{rep#1}[1]{%
 \def\rep@title{#2 \ref{##1}}%
 \begin{rep@theorem}}%
 {\end{rep@theorem}}}
\newcommand\reallywidehat[1]{%
\savestack{\tmpbox}{\stretchto{%
  \scaleto{%
    \scalerel*[\widthof{\ensuremath{#1}}]{\kern-.6pt\bigwedge\kern-.6pt}%
    {\rule[-\textheight/2]{1ex}{\textheight}}
  }{\textheight}%
}{0.5ex}}%
\stackon[1pt]{#1}{\tmpbox}%
}
\begin{document}

\title[Stability conditions under autoequivalences]{Geometric stability conditions under autoequivalences and applications: Elliptic Surfaces}

\author[Jason Lo]{Jason Lo}
\address{Department of Mathematics \\
California State University, Northridge\\
18111 Nordhoff Street\\
Northridge CA 91330 \\
USA}
\email{jason.lo@csun.edu}

\author[Cristian Martinez]{Cristian Martinez}
\address{Institute of Mathematics, Statistics and Scientific Computing \\
Universidade Estadual de Campinas\\
Rua Sergio Buarque de Holanda, 651\\
13083-859, Campinas, SP \\
Brazil}
\email{cristian@unicamp.br}

\keywords{Elliptic surfaces, Fourier-Mukai transforms, stability conditions}
\subjclass[2020]{Primary 14D20, 14J27; Secondary: 14F08}

\begin{abstract}
On a Weierstra\ss\ elliptic surface, we describe the action of the relative Fourier-Mukai transform on the geometric chamber of $\Stab(X)$, and in the K3 case we also study the action on one of its boundary compenents. Using new estimates for the Gieseker chamber we prove that Gieseker stability for polarizations on certain Friedman chamber is preserved by the derived dual of the relative Fourier-Mukai transform. As an application of our description of the action, we also prove projectivity for some moduli spaces of Bridgeland semistable objects.
\end{abstract}

\maketitle
\tableofcontents


\section{Introduction}
Ever since the seminal work of Mukai \cite{mukai_1981} on duality for abelian varieties, researchers have been wondering what type of stability is satisfied by the image of a stable sheaf under an autoequivalence of the derived category. In fact, Mukai's ideas inspired several works and generalizations (\cite{FMTes,BMef,YosPI} just to mention a few). One of the main obstacles for the analysis of the stability of an image object relies on the fact that, a priori, the image of a sheaf is not a sheaf. This forced the first authors working on this question to study the stability of the cohomologies of the image objects instead, and to give conditions under which a sheaf was taken to (a shift of) a sheaf by the autoequivalence. This setting changed a couple of decades ago with Bridgeland's introduction of stability conditions on triangulated categories \cite{StabTC} and his family of examples on K3 surfaces \cite{SCK3}. Then, it was clear that the image of a stable sheaf on a surface by any autoequivalence was a complex satisfying certain notion of stability in some abelian subcategory of the derived category.

\subsection{Solving an equation of stability} Given a triangulated category $\Tc$, one can associate the space $\mathrm{Stab}(\Tc)$ of Bridgeland stability conditions which comes with  two commuting group actions: a left action by the autoequivalence group $\mathrm{Aut}(\Tc)$ of the triangulated category, and a right action by $\widetilde{\mathrm{GL}}^+\!(2,\mathbb{R})$, the universal cover of $\mathrm{GL}^+(2,\mathbb{R})$.  In this article, we give a class of solutions to the following general problem:

\begin{prob}\label{prob:solvestabeq}
Given an autoequivalence $\Phi$ of a triangulated category $\Tc$, and a Bridgeland stability condition $\sigma$ on $\Tc$, can we find a Bridgeland stability condition $\sigma'$ and an element $g \in \widetilde{\mathrm{GL}}^+\!(2,\mathbb{R})$ so that the equation
\begin{equation}\label{eq:stabeq0}
  \Phi \cdot \sigma = \sigma' \cdot g
\end{equation}
holds?
\end{prob}

The point here is to construct $\sigma'$ directly (without referencing $\Phi$), so that we have two stability conditions $\sigma, \sigma'$ that can be constructed independently, and such that they satisfy the relation \eqref{eq:stabeq0}.  Since the $\widetilde{\mathrm{GL}}^+\!(2,\mathbb{R})$-action on moduli spaces does not alter semistable objects and merely relabels their phases, having an equation such as  \eqref{eq:stabeq0} means we ``completely understand'' the image of $\sigma$ under $\Phi$.

Suppose $\Tc = D^b(X)$ is the bounded derived category of coherent sheaves on a smooth projective variety $X$, and $\Phi^{\ch}$ denotes the endomorphism on the Chow ring of $X$ induced by $\Phi$.  If  we write $\mathcal{M}(\sigma, v)$ to denote the moduli space of $\sigma$-semistable objects in $D^b(X)$ with Chern character $v$, then an equation of stability conditions of the form \eqref{eq:stabeq0} implies, for any $v$, that we have a set-theoretic bijection between the closed points of the  moduli spaces
\[
  \mathcal{M}_{\sigma}(v) \leftrightarrow \mathcal{M}_{\Phi \cdot \sigma}( \Phi^{\ch}(v))
\]
simply because $\Phi$ is an autoequivalence of categories.  In case $\Phi$ is an equivalence that respects flat families of objects, such as a relative Fourier-Mukai transform on elliptic fibrations, we obtain an isomorphism of moduli spaces
\[
  \mathcal{M}_{\sigma}(v) \cong \mathcal{M}_{\Phi \cdot \sigma}( \Phi^{\ch}(v)).
\]
That is, anytime we have an equation of stability conditions of the form \eqref{eq:stabeq0}, we obtain an isomorphism of moduli spaces for \emph{any} choice of Chern character $v$.  This provides a method for studying moduli spaces on $X$ that takes a different viewpoint from the more traditional approach, where one begins with a \emph{fixed} Chern character $v$, thus fixing some moduli space $\mathcal{M}'(v)$ with respect to some notion of stability, and then ask whether there are transformations one can perform on $\mathcal{M}'(v)$ (such as using autoequivalences on $D^b(X)$) to bring it to a more familiar moduli space.

\subsection{Relations to other problems}  Problem \ref{prob:solvestabeq} is a fundamental problem  in contexts ranging from mirror symmetry, Donaldson-Thomas invariants, to dynamical systems.  We point out some of them to highlight its relevance:


\textbf{K3 and elliptic surfaces.} Mathematically, the stability manifold $\mathrm{Stab}(X)$ of a smooth projective variety $X$ and the various structures that can be derived from it can be considered as  invariants of $X$.  In the case of a K3 surface $X$, Bridgeland conjectures that the action of $\mathrm{Aut}(D^b(X))$ preserves the connected component $\mathrm{Stab}^\dagger (X)$ of $\mathrm{Stab}(X)$ consisting of `geometric' stability conditions \cite[Conjecture 1.2]{SCK3}.  In the case of K3 surfaces of Picard rank 1, this conjecture was proved by Bayer and Bridgeland \cite{bayer2017derived} by showing that the union of images of $\mathrm{Stab}^\dagger (X)$ under the $\mathrm{Aut}(D^b(X))$-action is contractible.  Results such as \eqref{eq:stabeq0} can also help towards this conjecture: When $X$ is a Weierstra{\ss} elliptic surface of nonzero Kodaira dimension, the autoequivalence group has an explicit description by the work of Uehara \cite{uehara2015autoequivalences}, and the analogue of Bridgeland's conjecture can be verified directly using equations of the form \eqref{eq:stabeq0} \cite[Theorem 13.2]{Lo20}.

\textbf{Donaldson-Thomas invariants.}  Motivated by trying to explain symmetries within and constraints on Donaldson-Thomas invariants, and attempting to construct stability conditions on the category of matrix factorisations, Toda proposed to study `Gepner-type' Bridgeland stability conditions, which are stability conditions $\sigma$ on a triangulated category $\Tc$ satisfying
\begin{equation}\label{eq:stabeq2}
  \Phi \cdot \sigma = \sigma \cdot s
\end{equation}
for some $\Phi \in \mathrm{Stab}(\Tc)$ and some complex number $s$ \cite{toda2013gepner}.  Here,  the $\mathbb{C}$-action on $\mathrm{Stab}(\Tc)$ is more restrictive than the $\widetilde{\mathrm{GL}}^+\!(2,\mathbb{R})$-action, and so equation \eqref{eq:stabeq2} is a special case of equation \eqref{eq:stabeq0}.

\textbf{Dynamical systems.}  Recently, a parallel between Teichm\"{u}ller theory of surfaces and Bridgeland stability conditions has been established.  Since a generic element of the mapping class group of a closed orientable surface is a pseudo-Anosov map, it is  natural to ask what the categorical analogue of a pseudo-Anosov map is.  To capture the idea that a pseudo-Anosov map on a Riemann surface streches a foliation while contracting another, Dimitrov, Haiden, Katzarkov and Kontsevich proposed a definition of  pseudo-Anosov equivalences of a triangulated category: An autoequivalence $\Phi$ of a triangulated category $\Tc$ is called pseudo-Anosov if it satisfies an equation of the form
\begin{equation}\label{eq:stabeq3}
  \Phi \cdot \sigma = \sigma \cdot g
\end{equation}
for some $\sigma \in\mathrm{Stab}^\dagger (X)$ and some $g \in \widetilde{\mathrm{GL}}^+\!(2,\mathbb{R})$ where $g$ is a lift of an element $\begin{pmatrix} r & 0 \\ 0 & 1/r \end{pmatrix} \in \mathrm{GL}^+\!(2,\mathbb{R})$ with $|r| \neq 1$ \cite[Definition 2.14]{FAN2021107732}.  Here, the Bridgeland stability condition $\sigma$ plays the role of  measured foliations, while $r, 1/r$ correspond to the factors of stretch/contraction induced by a pseudo-Anosov map. Clearly, equation \eqref{eq:stabeq3} is a special case of \eqref{eq:stabeq0}.

\subsection{Main results and applications} In this article, we give a class of solutions to equation \eqref{eq:stabeq0} on elliptic surfaces.  More concretely, suppose $\pi : X \to Y$ is    a Weierstra{\ss} elliptic surface, and $\Phi \in \mathrm{Aut}(D^b(X))$ is the relative Fourier-Mukai transform with kernel given by 
\begin{equation}\label{kernel}
 \mathcal{P}=\mathcal{I}_{\Delta}\otimes {\pi_1}^*(\mathcal{O}(\Theta))\otimes {\pi_2}^*(\mathcal{O}(\Theta))\otimes {\rho^*\omega}^{-1}
\end{equation}
where $\pi_1,\pi_2\colon X\times_Y X\rightarrow X$ are the projections, $\rho=\pi\circ \pi_1=\pi\circ\pi_2$, and $\omega=R^1\pi_*\mathcal{O}_X$.   That is, the kernel $\mathcal{P}$ is a normalised Poincar\'{e} sheaf that parametrises rank-one, torsion-free sheaves of degree zero on the fibration $\pi$. Let $\Theta$ denote the canonical section of $\pi$, and $f$ the class of a fiber for the fibration. 

\begin{thm}\label{thm:intromain1}
Let $X$ be a Weierstra{\ss} elliptic surface.   For any $\mathbb{R}$-divisor $\bar{B}$ in the span of $\Theta, f$,  there exists $a_0 >0$ such that, for any ample $\mathbb{R}$-divisor  $\bar{\omega} = \Theta + \bar{b}f$ and any $\bar{a}>a_0$, we can find $\mathbb{R}$-divisors $\omega, B$, a constant $a$, and some $g \in \widetilde{\mathrm{GL}}^+\!(2,\mathbb{R})$ such that the following equation holds:
\begin{equation}\label{eq:stabeq5}
  \Phi \cdot \sigma_{\bar{a}, \bar{B},\bar{\omega}} = \sigma_{a, B,\omega} \cdot g.
\end{equation}
\end{thm}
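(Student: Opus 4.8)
The plan is to produce the data $(\omega, B, a, g)$ in three stages: first determine the action of $\Phi$ on Chern characters, then show that $\Phi$ (up to a shift) carries the heart of $\sigma_{\bar a,\bar B,\bar\omega}$ onto a heart of the required type $\mathcal A(a,B,\omega)$, and finally compare the two central charges and lift the comparison to $\widetilde{\mathrm{GL}}^+\!(2,\RR)$. For the first stage I would compute $\Phi^{\ch}$ by Grothendieck--Riemann--Roch applied to the kernel $\mathcal P$ of \eqref{kernel}. Because $\Phi$ is a \emph{relative} transform, $\Phi^{\ch}$ acts on each fibre like the classical Fourier--Mukai involution on an elliptic curve --- it exchanges the rank and the fibre-degree, up to the twists built into $\mathcal P$ --- and in particular it is an invertible real-linear map. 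The only input I need from this computation is the formal statement that the composite $Z_{\bar a,\bar B,\bar\omega}\circ (\Phi^{\ch})^{-1}$ is again of the shape of a geometric central charge: there are an ample $\RR$-divisor $\omega$, an $\RR$-divisor $B$, and a constant $a>0$ with $Z_{\bar a,\bar B,\bar\omega}\circ(\Phi^{\ch})^{-1} = M\circ Z_{a,B,\omega}$ for some $M$ with $\det M > 0$, where ampleness of $\omega$ and positivity of $\det M$ hold precisely once $\bar a$ exceeds an explicit threshold $a_0 = a_0(\bar B,\bar b)$. This is where the hypothesis $\bar a > a_0$ enters on the level of central charges.

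The substantive step is to show that $\Phi$, or $\Phi[1]$, sends the tilted heart $\mathcal A(\bar a,\bar B,\bar\omega)$ to $\mathcal A(a,B,\omega)$ up to an overall shift $[k]$, for the divisors produced above. I would argue fibrewise. By construction $\mathcal A(\bar a,\bar B,\bar\omega)$ is the tilt of $\Coh(X)$ at the torsion pair $(\Tc_{\bar\omega,\bar B},\Fc_{\bar\omega,\bar B})$ defined by twisted slope stability $\mu_{\bar\omega,\bar B}$; as $\bar a\to\infty$ this slope is dominated by the fibre-degree, so for $\bar a$ large the torsion pair is governed by fibrewise behaviour. On each fibre $\Phi$ restricts to a shift of the elliptic-curve transform, which takes sheaves of positive fibre-slope to $\mathrm{WIT}_0$ objects and sheaves of negative fibre-slope to $\mathrm{WIT}_1$ objects; feeding this into the standard spectral-sequence estimate bounding $\Hc^i(\Phi(E))$ in terms of the fibrewise $\mathrm{WIT}$-type of $E$, I would show that every object of $\Tc_{\bar\omega,\bar B}$ and every object of $\Fc_{\bar\omega,\bar B}[1]$ is carried by $\Phi$ into the extension-closure of $\Tc_{\omega,B}$ and $\Fc_{\omega,B}[1]$, and conversely; hence $\Phi(\mathcal A(\bar a,\bar B,\bar\omega)) = \mathcal A(a,B,\omega)[k]$. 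The quantitative point here --- exactly how large $\bar a$ must be so that $\mu_{\bar\omega,\bar B}$-semistable sheaves split cleanly into the two $\mathrm{WIT}$ classes --- is what really pins down $a_0$, which I would enlarge if necessary so as also to absorb the positivity requirements from the first stage.

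With $\Phi(\mathcal A(\bar a,\bar B,\bar\omega)) = \mathcal A(a,B,\omega)[k]$ in hand, the stability function on $\Phi(\mathcal A(\bar a,\bar B,\bar\omega))$ induced from $Z_{\bar a,\bar B,\bar\omega}$ is, after undoing the shift $[k]$, exactly the function $M\circ Z_{a,B,\omega}$ from the first stage. Since $\det M > 0$ and the shift $[k]$ is realised by the element of $\widetilde{\mathrm{GL}}^+\!(2,\RR)$ covering rotation by $k\pi$, the pair consisting of this shift and $M$ lifts to a unique $g\in\widetilde{\mathrm{GL}}^+\!(2,\RR)$; by construction $\Phi\cdot\sigma_{\bar a,\bar B,\bar\omega}$ and $\sigma_{a,B,\omega}\cdot g$ then have the same heart and the same central charge, hence coincide, which is \eqref{eq:stabeq5}. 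The support property for $\sigma_{a,B,\omega}$ is the usual fact for geometric stability conditions on surfaces, and in any case it is transported from $\sigma_{\bar a,\bar B,\bar\omega}$ along the equivalence $\Phi$, so no separate verification is needed.

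I expect the fibrewise heart analysis of the second stage to be the main obstacle: it requires controlling the cohomology sheaves of $\Phi(E)$ uniformly as $E$ ranges over the whole tilted heart rather than merely over sheaves, and extracting a clean threshold $a_0$ that simultaneously matches the positivity constraints of the first stage is delicate. An alternative route would be to deform --- connect $\sigma_{\bar a,\bar B,\bar\omega}$ by a path to a large-volume (polynomial or weak) stability condition whose $\Phi$-image is transparent, and propagate \eqref{eq:stabeq5} along the path using continuity of both sides --- but the endpoint analysis still reduces to the same fibrewise $\mathrm{WIT}$ bookkeeping.
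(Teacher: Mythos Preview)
Your Stage~2 contains a genuine gap, and is in fact inconsistent with Stage~1. First, the heart $\Coh^{\bar B,\bar\omega}(X)$ depends only on $\bar B,\bar\omega$ --- the parameter $\bar a$ enters only the real part of the central charge --- so the sentence ``as $\bar a\to\infty$ this slope is dominated by the fibre-degree'' is empty: nothing in the input torsion pair moves with $\bar a$. Second, the output divisors $B,\omega$ produced by Stage~1 \emph{do} depend on $\bar a$ (explicitly $b=\bar a+e+\bar p^{\,2}(\bar b-\tfrac{e}{2})$), so the hearts $\Coh^{B,\omega}(X)$ form a genuinely varying family in $\bar a$, whereas $\Phi(\Coh^{\bar B,\bar\omega}(X))$ is fixed; hence the identity $\Phi(\Coh^{\bar B,\bar\omega}(X))=\Coh^{B,\omega}(X)[k]$ cannot hold across a range of $\bar a$. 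Concretely, the matrix relating the central charges is $T=\left(\begin{smallmatrix}-p & 1+p\bar p\\ -1 & \bar p\end{smallmatrix}\right)$, whose $(2,1)$ entry is always $-1$; thus $T$ never maps the positive real ray to a real ray, any lift $g=(T,f)$ has $f(0)\notin\ZZ$, and the heart of $\sigma_{a,B,\omega}\cdot g$ is a genuine non-integer tilt of $\Coh^{B,\omega}(X)$ rather than a shift. Stage~3 therefore cannot assemble the pieces as written: a lift of a nontrivial $M$ does not act on $\sigma_{a,B,\omega}$ by merely shifting the heart.

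The paper bypasses the heart comparison entirely. After solving the central-charge equation (Appendix~\ref{computations}), it only shows that $\Phi\cdot\sigma_{\bar a,\bar B,\bar\omega}$ is \emph{geometric}: every $\OO_x$ is stable, equivalently every $\Phi^{-1}\OO_x\cong\whPhi\OO_x[1]$ is $\sigma_{\bar a,\bar B,\bar\omega}$-stable. But $\whPhi\OO_x$ is a rank-one torsion-free sheaf on a fibre, hence a $1$-dimensional Gieseker-stable sheaf of fixed Chern character $(0,f,0)$, and the Gieseker-chamber estimate of Lemma~\ref{one dim real estimates} supplies an explicit threshold $\bar a_0$ beyond which all such sheaves are Bridgeland-stable. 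Once $\Phi\cdot\sigma_{\bar a,\bar B,\bar\omega}$ is known to be geometric, Lemma~\ref{lem:MSLem6-20} (a geometric stability condition with skyscrapers of phase $1$ and prescribed $\Im Z$ has a uniquely determined heart) pins down the heart of $\Phi\cdot\sigma_{\bar a,\bar B,\bar\omega}\cdot g^{-1}$ as $\Coh^{B,\omega}(X)$ automatically, for an appropriate lift $g$ of $T$. Thus the ``main obstacle'' you anticipate --- uniform control of $\Hc^i(\Phi E)$ over the whole tilted heart --- is replaced by checking Bridgeland stability for a single Chern character.
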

The key point here is that $a, B, \omega, g$ can  be computed explicitly in terms of $\bar{\omega},\bar{B}$, and $\bar{a}$.  That is, given the Bridgeland stability condition $\sigma_{\bar{a}, \bar{B}, \bar{\omega}}$, we know \emph{exactly which} stability condition $\Phi \cdot \sigma_{\bar{a}, \bar{B}, \bar{\omega}}$ is - up to the action of the element $g$, it is precisely $\sigma_{a, B, \omega}$, and we can explicitly write down $g, a, B, \omega$.  In other words, we understand exactly how the autoequivalence $\Phi$ acts on the subset of the stability manifold $\Stab (X)$ consisting of stability conditions of the form $\sigma_{\bar{a}, \bar{B}, \bar{\omega}}$.

In the case where $\bar{\omega}$ is assumed to be an $\mathbb{R}$-multiple of an integral ample divisor and $\bar{B}$ is assumed to be an integral multiple of $f$, equation \eqref{eq:stabeq5} was established in \cite[Theorem 11.7]{Lo20}. In this article, the integrality assumptons are replaced with  an estimate for the Bridgeland stability of  1-dimensional Gieseker stable sheaves on smooth projective surfaces (Lemma \ref{one dim real estimates}).

Armed with Theorem \ref{thm:intromain1}, we explore some consequences related to preservation of stability by $\Phi$, obtaining rational maps, birational maps, and in some cases isomorphisms among different moduli spaces of sheaves. Preservation of slope stability by the autoequivalence $\Phi$, for instance, has been studied under some numerical constrains by several authors \cite{FMTes,yoshioka_1999,Ruiprez2002StableSO,Yoshioka2000ModuliSO,Jardim2000AFA} for polarizations in the so called Friedman chamber, which we describe below:

Given an elliptic surface $\pi\colon X\rightarrow Y$ with generic fiber $f$, a Chern character $v$ with coprime rank and fiber degree, and a polarization $L$, Friedman \cite{FriedmanRank2} proved that there exists $b_0>0$ such that the set of slope semistable torsion-free sheaves with Chern character $v$ and with respect to polarizations of the form $L+bf$ is constant for $b>b_0$. Moreover, stability and semistability are equivalent for all such values of $b$. In the particular case of a Weierstra\ss\ elliptic surface, we can study stability for polarizations of the form $\Theta+bf$ for $b\gg 0$. The corresponding moduli space will be denoted by $M_f(v)$.

The novelty of our approach comes after interpreting slope stability in the realm of Bridgeland stability conditions. Then, instead of analyzing the cohomologies of the complex $\Phi(E)[1]$ when $E$ is a slope stable sheaf, we look for the chamber of the stability manifold $\Stab(X)$ that corresponds to the Gieseker chamber by the autoequivalence $\Phi[1]$. Our analysis can be described as follows:
\begin{itemize}
\item[\textbf{Step 1}.] We start by fixing a $\mathbb{Q}$-divisor $\bar{B}$ and a polarization $\bar{\omega}$, and consider certain path $\{\sigma_{\bar{a},\bar{B},\bar{\omega}}\}_{\bar{a}>0}$ of stability conditions in $\Stab(X)$. Then, for a Chern character $v$, we give precise estimates for $\bar{a}$ so that this path enters the Gieseker chamber, i.e., for which values of $\bar{a}$ one has that the only $\sigma_{\bar{a},\bar{B},\bar{\omega}}$-semistable objects of Chern character $v$ are precisely the ($\bar{B}$-twisted) Gieseker semistable sheaves with respect to $\bar{\omega}$. 
\item[\textbf{Step 2}.] The next step consists in proving that when $\bar{B}=\bar{p}\Theta+\bar{q}f$ and $\bar{\omega}=\Theta+\bar{b}f$ then $\Phi[1]\cdot \sigma_{\bar{a},\bar{B},\bar{\omega}}$-stability is equivalent to $\sigma_{a,B,\omega}$-stability for some $B=p\Theta+qf$, $\omega=\Theta+bf$, and $a>0$. In particular this holds for large values of $\bar{a}$, when $\sigma_{\bar{a},\bar{B},\bar{\omega}}$ lies in the Gieseker chamber.
\item[\textbf{Step 3}.] Ideally, one would like to impose conditions on $\bar{B},\bar{\omega}$ and $\bar{a}$ to ensure that the path $\{\sigma_{a,B,\omega}\}_{a>0}$ enters the Gieseker chamber for $\Phi(v)[1]$ at the same time that the path $\{\sigma_{\bar{a},\bar{B},\bar{\omega}}\}_{\bar{a}>0}$ enters the Gieseker chamber for $v$, providing the perfect set up to compare the Gieseker stability of a sheaf and its relative Fourier-Mukai transform. However, this can be only be achieved for particular values of $v$, two of which we study in detail: when the fiber degree is zero and $\bar{B}$ is a multiple of the fiber class (Theorem \ref{zerofiberdegree}), and when $v$ is the Chern character of a line bundle and the Picard number is two (Corollary \ref{linebundles}); in both cases we prove that $\Phi(E)$ is the derived dual of a Gieseker semistable sheaf.
\item[\textbf{Step 4}.] In the particular case when $\bar{B}$ is a multiple of the fiber, the ray $\{\sigma_{\bar{a},\bar{B},\bar{\omega}}\}_{\bar{a}>0}$ enters the Gieseker chamber for $v$ at the same time that $\omega$ enters the Friedman chamber for $\Phi(v)^{\vee}$ and vice-versa. However, since the Gieseker chambers do not exactly correspond to each other before and after the relative Fourier-Mukai transform, the autoequivalence $\Phi^{\vee}$ induces only a rational map, obtained from the well-behaved Bridgeland wall-crossing, between the  corresponding Gieseker moduli spaces. We can then use a similar division algorithm to the one  proposed by Bernardara and Hein in \cite{bernardara:hal-00958218} in the case of elliptic K3 surfaces to show that if the rank and fiber degree are coprime then any twisted Gieseker moduli space is birational to a moduli space of rank 1 torsion-free sheaves. 
\end{itemize}
Our computations and intuition of what the image of the stability condition $\sigma_{\bar{a},\bar{B},\bar{\omega}}$ by the autoequivalence $\Phi[1]$ was, have been greatly developed and inspired by earlier work of the authors together with Liu \cite{LLM}, where the image of slope stability for torsion-free sheaves was identified with a limit stability condition. There, the limit stability of the image of 1-dimensional Gieseker semistable sheaves was also studied by a rigurous analysis of the walls for Bridgeland stability conditions. 

Finally, we would like to remark that our approach avoids completely having to verify if a sheaf is $\Phi$-$\text{WIT}_i$. Instead we use the derived dual functor to analyze the stability of the complex $\Phi(E)[1]$ as a whole. 

The organization of the paper is as follows. In Section 2, we recall some basic definitions regarding Weierstra\ss\ elliptic surfaces and stability conditions. In Section 3, we introduce the type of stability conditions we will consider and prove Proposition \ref{isoUpsilon}, which establishes an isomorphism between various Bridgeland moduli spaces induced by an autoequivalence that preserves geometricity. In Section 4, we give specific bounds for the Gieseker chamber. In Section 5, we restrict to the case of Weierstra\ss\ elliptic surfaces and prove Theorem \ref{thm:intromain1}; give a class of examples of projective Bridgeland moduli spaces; and prove Theorem \ref{newisobyPhi}, Corollary \ref{linebundles} and Theorem \ref{zerofiberdegree}, which are our main results about preservation of stability. In Section 6, we discuss the division algorithm. Finally, in Section 7, for the case of Weiertra\ss\ elliptic K3 surfaces we analyze the image by $\Phi$ of some of the stability conditions on the boundary of the geometric chamber described by Tramel and Xia in \cite{tramel2017bridgeland}, and establish Corollary \ref{projectivityTXcomponent}, proving that such stability conditions have projective moduli.

\subsection*{Acknowledgments} The last part of this work was completed while the authors attended the AIM SQuaRE Research Program ``Moduli of sheaves on surfaces via Bridgeland stability'' in  August 2022. The authors would like to thank the American Institute of Mathematics for its support and excellent working conditions. The authors are also grateful to Marcos Jardim, Wei-Ping Li, Zhenbo Qin, and Ziyu Zhang for several insightful discussions on the topic of this article. CM would like to express his gratitude to California State University, Northridge for hosting him during several stages of this project. JL is partially supported by NSF grant DMS-2100906. CM is supported by the FAPESP grant number 2020/06938-4, which is part of the FAPESP Thematic Project 2018/21391-1. 

\section{Preliminaries}

\paragraph For any smooth projective variety $X$, we will always write $D^b(X)$ to denote $D^b(\Coh (X))$, the bounded derived category of coherent sheaves on $X$. 

\paragraph[Weierstra{\ss} elliptic surface] By a Weierstra{\ss} elliptic surface, we mean a flat morphism $\pi : X \to Y$ between smooth projective varieties $X, Y$  over $\mathbb{C}$ such that 
\begin{itemize}
    \item The fibers of $\pi$ are Gorenstein curves of arithmetic genus 1 and geometrically integral.
    \item There exists a section $s : Y \to X$ such that its image $\Theta$ does not intersect any singular point of any singular fiber.
\end{itemize}
Under these conditions, the generic fiber of $\pi$ is a smooth elliptic curve, and the singular fibers are either nodal or cuspidal.  The definition we use here follows that in \cite[Definition 6.10]{FMNT}.  For brevity, we sometimes refer to $X$ as the Weierstra\ss\  elliptic surface. 

We usually write $f$ to denote the divisor class of a fiber of $\pi$ in $\mathrm{NS}(X)$, and we set $e = -\Theta^2$.

\paragraph[Autoequivalences] Let $\pi : X \to Y$ be a Weierstra{\ss} elliptic surface.  Then we always have a pair of relative Fourier-Mukai transforms   $\Phi, \whPhi : D^b(X) \to D^b(X)$ satisfying
\[
  \whPhi \Phi [1] \cong \mathrm{id}_{D^b(X)} \cong \Phi \whPhi [1].
 \]
The autoequivalence $\Phi$ has the normalized relative Poincar\'{e} sheaf \eqref{kernel} as its kernel, and for every skyscraper sheaf $\OO_x$ supported at a closed point $x \in X$, the transform $\Phi (\OO_x)$ is a rank-one, torsion-free sheaf on the fiber of $\pi$ containing $x$.  The functor $\whPhi$ has a similar interpretation.  Details of the construction of the functors $\Phi$ and $\whPhi$ can be found in \cite[6.2.3]{FMNT}.

The automorphisms of the lattice  $\widetilde{\mathrm{NS}}(X):=\mathbb{Z}\oplus\mathrm{NS}(X)\oplus \frac{1}{2}\mathbb{Z}$,  induced by $\Phi$ and $\whPhi$ via the Chern character, can be described as follows \cite[6.2.6]{FMNT}:  For any $E \in D^b(X)$, if we write
\begin{equation}\label{eq:chE}
  \ch_0(E)=n, \text{\quad} f\ch_1(E)=d, \text{\quad}\Theta \ch_1(E)=c, \text{\quad} \ch_2(E)=s,
\end{equation}
then 
\begin{align*}
    \ch_0 (\Phi E) &= d \\
    \ch_1 (\Phi E) &\equiv -c_1(E) + (d-n)\Theta +(c+\tfrac{e}{2}d+s)f \\
    \ch_2(\Phi E) &= -(c+ed-\tfrac{e}{2}n).
 \end{align*}
In particular, we have
\[
f\ch_1(\Phi E)=-n, \text{\quad} \Theta \ch_1(\Phi E)=s-\tfrac{e}{2}d+en.
\]
Also, 
\begin{align*}
    \ch_0 (\whPhi E) &= d \\
    \ch_1 (\whPhi E) &\equiv \ch_1(E)  -(d+n)\Theta + (-c-\tfrac{e}{2}d+s)f \\
    \ch_2 (\whPhi E) &= -(c+ed + \tfrac{e}{2}n).
\end{align*}

\paragraph[Slope stability] Given a smooth projective surface $X$, for any $\mathbb{R}$-divisor $B$ on $X$ and any coherent sheaf $E$ on $X$, we define the $B$-twisted Chern character vector by 
\[
  \ch^B(E) := e^{-B}\ch(E)=(\ch_0^B(E),\ch_1^B(E),\ch_2^B(E))\in \widetilde{\mathrm{NS}}(X)\otimes \mathbb{R}. 
\]
More precisely, we have 
\begin{align*}
    \ch_0^B(E) &= \ch_0(E), \\
    \ch_1^B(E) &= \ch_1(E)-B\ch_0(E), \\
    \ch_2^B(E) &= \ch_2(E)-B\ch_1(E)+\frac{B^2}{2}\ch_0(E).
\end{align*}

Additionally, if $\omega$ is an ample class, then we can define a slope function $\mu_{B,\omega}$ on nonzero coherent sheaves via
\[
  \mu_{B,\omega} (E) = \begin{cases} \frac{\omega\ch_1^B(E)}{\ch_0(E)} &\text{ if } \ch_0(E) \neq 0, \\
  +\infty &\text{ otherwise.}
  \end{cases}
\]

A nonzero coherent sheaf $E$ on $X$ is then called $\mu_{B,\omega}$-semistable (resp.\ $\mu_{B,\omega}$-stable) if
\[
  \mu_{B,\omega}(F) \leq (\text{resp.\ $<$})\,\, \mu_{B,\omega}(E)
\]
for every nonzero proper subsheaf $F\hookrightarrow E$. In particular, a $\mu_{B,\omega}$-semistable sheaf of nonzero rank is torsion-free.  When $B=0$, we usually drop the subscript $B$ in $\mu_{B,\omega}$ and simply write $\mu_\omega$.

When $E$ is a sheaf of nonzero rank, we have $\mu_{B,\omega}(E)=\mu_\omega (E)-B\omega$, meaning $\mu_{B,\omega}$-semistability is equivalent to $\mu_\omega$-stability for $E$.

An important property of $\mu_{B,\omega}$-semistable sheaves is that they satisfy the Bogomolov inequality 
$$
\Delta(E):=\ch_1(E)^2-2\ch_0(E)\ch_2(E)\geq 0.
$$
\paragraph[Bridgeland stability conditions] For \label{para:Bristabconintro} the purpose of this article, a Bridgeland stability condition on $D^b(X)$ is a pair $\sigma = (Z,\Ac)$ where:
\begin{itemize}
\item $\Ac$ is the heart of a bounded t-structure on $D^b(X)$.
\item $Z : K(\Ac) \to \mathbb{C}$ is a group homomorphism from the Grothendieck group of $D^b(X)$  to the additive group of complex numbers, which factors through a surjective map 
$$
v\colon K(\Ac)\twoheadrightarrow \Gamma.
$$
to a finite rank lattice $\Gamma$. Here we are using the identification of Grothendieck groups $K(D^b(X))=K(\Ac)$.  We refer to $Z$ as the central charge of $\sigma$.
\item $Z$ and $\Ac$ satisfy the following conditions:
\begin{itemize}
    \item[(i)](Positivity) For every nonzero object $E$ in $\Ac$ we can write $Z(E)=m(E)\exp(\pi i\phi(E))$ for some $m(E)>0$ and $\phi(E)\in (0,1]$. We refer to $\phi(E)$ as the phase of $E$. We say that $E\in \Ac$ is $Z$-semistable (resp. $Z$-stable) if for any nonzero  proper subobject $A\hookrightarrow E$ in $\Ac$ we have
    $$
    \phi(A)\leq\ (\text{resp.}\ <)\  \phi(E).
    $$
    \item[(ii)](HN filtration) The function $Z$ satisfies the Harder-Narasimhan (HN) property on $\Ac$, i.e., for any nonzero object $E \in \Ac$ there exists a unique filtration in $\Ac$
\[
0 = E_0 \subseteq E_1 \subseteq \cdots \subseteq E_m=E,
\]
with factors $F_i:=E_i/E_{i-1}$ that are $Z$-semistable of strictly decreasing phases:
\[
 \phi (F_1) > \phi (F_2) > \cdots > \phi (F_m).
\]
    \item[(iii)](Support property) There exists a quadratic form $Q\colon \Gamma\otimes \mathbb{R}\rightarrow \mathbb{R}$ such that $\ker Z\subset \Gamma\otimes \mathbb{R}$ is negative definite with respect to $Q$, and $Q(v(E))\geq 0$ for any $Z$-semistable object $E\in D^b(X)$ (see Remark \ref{remRealphase} below). 
\end{itemize}
\end{itemize}
\begin{rem}\label{remRealphase}
Given a stability condition $\sigma = (Z,\Ac)$ on $D^b(X)$, we say an object $E \in D^b(X)$ is $\sigma$-semistable (resp.\ $\sigma$-stable) if it is of the form $E'[j]$ for some $Z$-semistable (resp.\ $Z$-stable) object $E'$ in $\Ac$ and some integer $j$.  If an object $E \in D^b(X)$ is of the form $E = F[m]$ for some $m \in \Ac$, then we define the phase of $E$ via $\phi (E)=\phi (F)+m$.
\end{rem}
\begin{rem}\label{slicing}
For any $\phi\in\mathbb{R}$, we denote by $\mathcal{P}(\phi)\subset D^b(X)$ the full subcategory generated via extension closure by $\sigma$-semistable objects of phase $\phi$. Notice that by the HN property $$
\mathcal{P}(0,1]:=\langle E\colon E\in\mathcal{P}(\phi),\ \phi\in(0,1]\rangle=\mathcal{A}.
$$
The collection of subcategories $\mathcal{P}:=\{\mathcal{P}(\phi)\}_{\phi\in\RR}$ form what is called a slicing of $D^b(X)$.
\end{rem}

\paragraph[Group actions] The set $\Stab^{\Gamma}(X)$ of stability conditions (as defined above) on a smooth projective variety $X$ has the structure of complex manifold \cite{StabTC}, whose local charts are given by the forgetful maps $\Stab^{\Gamma}(X)\rightarrow \Hom(\Gamma,\mathbb{C})$,  which send a stability condition to its central charge. 

The stability manifold comes equipped with two natural commuting group actions \cite[Lemma 8.2]{StabTC}:

\begin{itemize}
    \item A left action of the group of exact autoequivalences $\Aut(D^b(X))$ defined by
    $$
    \Upsilon\cdot (Z,\Ac)=(Z',\Upsilon(\Ac)),\ \text{where}\ Z'(E)=Z(\Upsilon^{-1}E).
    $$
    \item A right action of the universal cover $\widetilde{\mathrm{GL}}^+(2,\RR)$ of $\mathrm{GL}^+(2,\RR)$, the group of $2\times 2$ matrices with positive determinant. Using the description
    \begin{multline*}
    \widetilde{\mathrm{GL}}^+(2,\RR)= \\
    \{(T,f)\colon T\in \mathrm{GL}^+(2,\RR),\ f\colon \RR\rightarrow \RR\ \text{increasing},\ f(\phi+1)=f(\phi)+1,\ f|_{\RR/2\ZZ}=T|_{(\RR^2\setminus\{0\})/\RR_{>0}}\},
    \end{multline*}
    this action is given by:
    $$
     (Z,\mathcal{P})\cdot (T,f)=(Z',\mathcal{P}'),\text{where}\ Z'(E)=T^{-1}Z(E),\ \mathcal{P}'(\phi)=\mathcal{P}(f(\phi)).
    $$
    Here we are defining the action using  slicings, which as explained in Remark \ref{slicing} determines the heart of the corresponding t-structure. 
\end{itemize}

\section{Stability conditions on surfaces} 
Suppose $X$ is a smooth projective surface, and $\omega, B$ are $\mathbb{R}$-divisors on $X$ where $\omega$ is ample.  Then we can define a Bridgeland stability condition $\sigma_{B,\omega}\in \Stab^{\Gamma}(X)$, where $\Gamma=\widetilde{\mathrm{NS}}(X)$ and $v$ is the Chern character map (where $\Gamma, v$ are as in \ref{para:Bristabconintro}), as follows.  First, we define the following full subcategories of $\Coh (X)$:
\begin{align*}
  \Tc_{B,\omega} &= \{ E \in \Coh (X)\colon \mu_{B,\omega}(Q)>0\ \text{for all sheaf quotients}\ E\twoheadrightarrow Q \}, \\
   \Fc_{B,\omega} &= \{ E \in \Coh (X) \colon \mu_{B,\omega}(A)\leq 0\ \text{for all subsheaves}\ A\hookrightarrow E\}.
\end{align*}
These subcategories form a torsion pair and so the extension closure in $D^b(X)$
\[
  \Coh^{B,\omega}(X) = \langle \Fc_{B,\omega}[1], \Tc_{B,\omega} \rangle,
\]
is the heart of a t-structure; in particular, it is an abelian category.  The central charge 
\[
Z_{B,\omega}(E) = -\int_X e^{-(B+i\omega)}\ch(E) = -\ch_2^B(E) + \frac{\omega^2}{2}\ch_0^B(E) + i \omega\ch_1^B(E)
\]
can then be paired with $\Coh^{B,\omega}(X)$ to form a Bridgeland stability condition $\sigma_{B,\omega} = (Z_{\omega,B}, \Coh^{B,\omega}(X))$ (see \cite{ABL} and also  \cite{SCK3} for the positivity and HN properties, and \cite[Theorem 3.23]{Toda2012StabilityCA} for the support property).
\begin{rem}
As customary, we will omit $\Gamma$ from the notation when $\Gamma=\widetilde{\mathrm{NS}}(X)$ and $v$ is the Chern character map.
\end{rem}

\begin{rem}
The stability conditions $\sigma_{B,\omega}$ are examples of \emph{geometric} stability conditions, i.e., stability conditions for which all the skyscraper sheaves $\mathcal{O}_x$ are stable of the same phase.
\end{rem}

Let us now consider the slightly more general central charges 
\begin{equation}\label{geometriccentralcharge}
Z_{a,B,\omega}=-\ch_2^B+a\ch_0^B+i\omega\ch_1^B,
\end{equation}
where $a \in \RR$.

\begin{lem}\label{lem:ZaBom}
For any $a>0$ and $\RR$-divisors $B,\omega$ where $\omega$ is ample, the pair $(Z_{a,B,\omega}, \Coh^{B,\omega}(X))$ is a geometric Bridgeland stability condition.
\end{lem}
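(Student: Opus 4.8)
The plan is to realize $(Z_{a,B,\omega},\Coh^{B,\omega}(X))$ as a point on the $\widetilde{\mathrm{GL}}^+(2,\RR)$-orbit of one of the stability conditions $\sigma_{B,\omega'}$ already constructed above, and then invoke the fact that this orbit lies entirely inside $\Stab^{\Gamma}(X)$. Since $\omega$ is an ample $\RR$-divisor on a surface we have $\omega^2>0$, so for $a>0$ we may set $\lambda:=\sqrt{2a/\omega^2}>0$ and work with the ample $\RR$-divisor $\lambda\omega$.

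First I would record that rescaling $\omega$ does not change the heart: on sheaves of positive rank $\mu_{B,\lambda\omega}=\lambda\,\mu_{B,\omega}$ (and both are $+\infty$ on torsion sheaves), so the sign of the slope, and therefore the torsion pair $(\Tc_{B,\omega},\Fc_{B,\omega})$, is unaffected and $\Coh^{B,\lambda\omega}(X)=\Coh^{B,\omega}(X)$. By the references cited in the construction of $\sigma_{B,\omega}$ above (together with the Remark identifying such stability conditions as geometric), $\sigma_{B,\lambda\omega}=(Z_{B,\lambda\omega},\Coh^{B,\omega}(X))$ is then a geometric Bridgeland stability condition in $\Stab^{\Gamma}(X)$.

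Next I would compare the two central charges. Since $(\lambda\omega)^2/2=a$, the charges $Z_{B,\lambda\omega}$ and $Z_{a,B,\omega}$ have equal real parts, while $\Im Z_{B,\lambda\omega}=\lambda\,\Im Z_{a,B,\omega}$. Under the identification $\mathbb{C}\cong\RR^2$ by (real part, imaginary part), this says $Z_{a,B,\omega}=T^{-1}\circ Z_{B,\lambda\omega}$ with $T=\left(\begin{smallmatrix}1&0\\0&\lambda\end{smallmatrix}\right)\in\mathrm{GL}^+(2,\RR)$. As $T$ is diagonal with positive entries it fixes the positive real axis and maps the upper half-plane to itself, so it admits a unique lift $g=(T,f)\in\widetilde{\mathrm{GL}}^+(2,\RR)$ with $f(0)=0$; then $f(n)=n$ for all $n\in\ZZ$ and, $f$ being increasing, $f$ restricts to a bijection of $(0,1]$. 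The right action then gives $\sigma_{B,\lambda\omega}\cdot g=(Z_{a,B,\omega},\mathcal{P}')$ with $\mathcal{P}'(0,1]=\mathcal{P}(f(0,1])=\mathcal{P}(0,1]=\Coh^{B,\omega}(X)$; that is, the heart of $\sigma_{B,\lambda\omega}\cdot g$ is precisely $\Coh^{B,\omega}(X)$.

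Hence $(Z_{a,B,\omega},\Coh^{B,\omega}(X))=\sigma_{B,\lambda\omega}\cdot g$ belongs to $\Stab^{\Gamma}(X)$, which is what we want (the support property transports along the action with the same quadratic form, since $\ker Z$ and the class of semistable objects are unchanged). Geometricity is inherited as well: the $\widetilde{\mathrm{GL}}^+(2,\RR)$-action carries stable objects to stable objects and relabels all phases by the same function, so the skyscrapers $\OO_x$, being $\sigma_{B,\lambda\omega}$-stable of one common phase, remain $(\sigma_{B,\lambda\omega}\cdot g)$-stable of one common phase. The only mildly delicate points — and the closest thing to an obstacle — are choosing the correct branch of the lift $f$ (the one fixing $0$, so that the heart is genuinely preserved rather than shifted) and the observation that scaling $\omega$ leaves the torsion pair untouched; everything else is formal. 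A direct alternative would be to verify positivity, the Harder--Narasimhan property and the support property for $Z_{a,B,\omega}$ by hand, but that merely re-runs the Bogomolov-type estimates already absorbed into the cited construction of $\sigma_{B,\omega}$, so the orbit argument is preferable.
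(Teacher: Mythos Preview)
Your proof is correct and follows essentially the same approach as the paper: rescale $\omega$ by $\lambda=\sqrt{2a/\omega^2}$ (the paper's $t$) so that $Z_{B,\lambda\omega}$ and $Z_{a,B,\omega}$ differ by the diagonal matrix $\left(\begin{smallmatrix}1&0\\0&\lambda\end{smallmatrix}\right)$, lift this to an element of $\widetilde{\mathrm{GL}}^+(2,\RR)$ whose increasing function fixes the integers, and conclude that the heart $\Coh^{B,\omega}(X)=\Coh^{B,\lambda\omega}(X)$ is preserved while geometricity is inherited along the orbit. The only cosmetic difference is that you act by $g$ where the paper acts by $g^{-1}$ (with the inverse matrix), and you spell out the preservation of the support property and of geometricity a bit more explicitly than the paper does.
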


\begin{proof}
Let us fix $a>0, \omega, B$ as in the statement of the lemma.  Let $t>0$ satisfy   $a = \tfrac{(t\omega)^2}{2}$ so that we have
\[
 \begin{pmatrix} 1 & 0 \\ 0 & \tfrac{1}{t} \end{pmatrix} Z_{B,t\omega} = Z_{a,B,\omega}.
\]
Now  let $g$  denote the lift $\left(\begin{pmatrix} 1 & 0 \\ 0 & \tfrac{1}{t} \end{pmatrix}, f\right)$ of $\begin{pmatrix} 1 & 0 \\ 0 & \tfrac{1}{t} \end{pmatrix} $ in $\wt{\mathrm{GL}}^+\!(2,\RR)$ such that $f$ fixes every element in $\tfrac{1}{2}\ZZ$, and let $\Pc$ denote the slicing for the Bridgeland stability $(Z_{B,t\omega},\Coh^{B,t\omega}(X))$ so that $\Pc(0,1] = \Coh^{B,t\omega}(X)$. Then $(Z_{B,t\omega}, \Coh^{B,t\omega}(X))\cdot g^{-1}$ is the Bridgeland stability $(Z',\Pc')$ where $Z'=Z_{a,B,\omega}$ and 
\[
\Pc' (0,1] = \Pc(f(0),f(1)]=\Pc (0,1]=\Coh^{B,t\omega}(X)=\Coh^{B,\omega}(X).
\]
Hence $(Z_{a,B,\omega},\Coh^{B,\omega}(X))$ is a Bridgeland stability condition.  Since the $\wt{\mathrm{GL}}^+\!(2,\RR)$-action on the stability manifold preserves geometric stability conditions, we are done.
\end{proof}

\begin{rem}
By \cite[Theorem 7.25 and Example 7.27]{Alper2018ExistenceOM}, it follows that if $B$ and $\omega$ are $\mathbb{Q}$-divisors, $a\in\mathbb{R}$, and $v$ is the Chern character of a $\sigma_{a,B,\omega}$-semistable object in $\Coh^{B,\omega}(X)$, then there exists a proper good moduli space $\mathcal{M}_{a,B,\omega}(v)$, which is an algebraic space over $\mathbb{C}$. See also \cite[Theorem 21.24]{StabFam} for a complete proof in the relative setting. 
\end{rem}

\begin{prop}\label{isoUpsilon}
Let $\Upsilon\in \Aut(D^b(X))$ and suppose that $\sigma=(Z,\mathcal{A})$ is a stability condition such that 
\begin{equation}
Z_{a,B,\omega}(\Upsilon(\_))=T Z(\_)
\end{equation}
for some $a \in \mathbb{R}_{>0}$, $\mathbb{R}$-divisors $\omega, B$ where $\omega$ is ample, and $T\in \mathrm{GL}^+(2,\mathbb{R})$. If $\Upsilon\cdot \sigma$ is a geometric stability condition, then 
\begin{equation}\label{eq:stabeq-1}
\Upsilon \cdot \sigma = \sigma_{a,B,\omega}\cdot g
\end{equation}
for some $g \in \widetilde{\mathrm{GL}}^+\!(2,\mathbb{R})$.  Furthermore,  there exists an integer $j$  depending on $\sigma, \Upsilon, B, \omega, T$, such that the autoequivalence $\Upsilon [j]$ induces an isomorphism
$$
\mathcal{M}_{\sigma}(v)\cong \mathcal{M}_{a,B,\omega}((-1)^j \Upsilon(v)).
$$
of good moduli spaces.
\end{prop}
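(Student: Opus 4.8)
The plan is to deduce the equation \eqref{eq:stabeq-1} from the hypothesis $Z_{a,B,\omega}(\Upsilon(\_)) = TZ(\_)$ by exhibiting a suitable lift $g$ of an appropriate element of $\mathrm{GL}^+(2,\mathbb{R})$, and then to upgrade the resulting bijection of semistable objects to an isomorphism of good moduli spaces. First I would record what the central-charge identity forces: applying the autoequivalence action, $\Upsilon\cdot\sigma = (Z\circ\Upsilon^{-1}, \Upsilon(\mathcal{A}))$, and the hypothesis says $Z\circ\Upsilon^{-1} = T^{-1}Z_{a,B,\omega}$ on $K(\mathcal{A})$. So $\Upsilon\cdot\sigma$ and $\sigma_{a,B,\omega}$ have central charges differing by the linear map $T^{-1}\in\mathrm{GL}^+(2,\mathbb{R})$. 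Since $\Upsilon\cdot\sigma$ is assumed geometric, and $\sigma_{a,B,\omega}$ is geometric by Lemma \ref{lem:ZaBom}, both have all skyscrapers $\mathcal{O}_x$ stable of a common phase; I would use this together with the fact that two stability conditions whose central charges differ by an element of $\mathrm{GL}^+(2,\mathbb{R})$ and which share the same set of semistable objects lie in the same $\widetilde{\mathrm{GL}}^+(2,\mathbb{R})$-orbit. Concretely, one knows (this is the standard deformation/rigidity argument for geometric stability conditions on surfaces, as in the K3 case) that a geometric stability condition is determined by its central charge together with the choice of heart realizing $\mathcal{O}_x$ in a fixed phase; hence $\Upsilon\cdot\sigma$ must equal $\sigma_{a,B,\omega}\cdot g$ where $g$ is the lift of $T^{-1}$ normalized so that $f$ carries the phase of $\mathcal{O}_x$ under $\sigma_{a,B,\omega}$ to its phase under $\Upsilon\cdot\sigma$. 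This fixes $g$ uniquely and proves \eqref{eq:stabeq-1}.

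For the moduli-space statement, the point is that the $\widetilde{\mathrm{GL}}^+(2,\mathbb{R})$-action does not change which objects are semistable, only relabels phases: an object $E$ is $\Upsilon\cdot\sigma$-semistable if and only if it is $\sigma_{a,B,\omega}$-semistable. Unwinding the autoequivalence action, $E$ is $\Upsilon\cdot\sigma$-semistable iff $\Upsilon^{-1}(E)$ is $\sigma$-semistable. Therefore $\Upsilon$ gives a bijection between $\sigma$-semistable objects of class $v$ and $\sigma_{a,B,\omega}$-semistable objects of class $\Upsilon^{\mathrm{ch}}(v)$. The only subtlety is the shift: a $\sigma$-semistable object lives in $\mathcal{A}[k]$ for some $k$, and after applying $\Upsilon$ and matching phases via $g$ one lands in $\Coh^{B,\omega}(X)[k']$ for some $k'$; the parity $j := k'-k$ is constant (it is determined by where $g$ sends the relevant phase interval, equivalently by the sign bookkeeping in the identity $Z_{a,B,\omega}\circ\Upsilon = TZ$), and this is exactly the integer $j$ in the statement. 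So $\Upsilon[j]$ sends $\mathcal{M}_\sigma(v)$ to $\mathcal{M}_{a,B,\omega}((-1)^j\Upsilon(v))$ bijectively on closed points, the sign $(-1)^j$ accounting for the shift's effect on the Chern character.

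To promote this bijection to an isomorphism of good moduli spaces (in the sense of the Alper–et al. remark preceding the proposition), I would invoke that $\Upsilon[j]$, being an exact autoequivalence of $D^b(X)$ of Fourier–Mukai type, respects flat families: for any scheme $S$, it induces an equivalence between the groupoid of $S$-flat families of $\sigma$-semistable objects of class $v$ and the groupoid of $S$-flat families of $\sigma_{a,B,\omega}$-semistable objects of class $(-1)^j\Upsilon(v)$. This gives an isomorphism of the corresponding moduli stacks, and since a good moduli space is unique up to canonical isomorphism, the induced map on good moduli spaces $\mathcal{M}_\sigma(v)\cong\mathcal{M}_{a,B,\omega}((-1)^j\Upsilon(v))$ is an isomorphism. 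The main obstacle I anticipate is the first part — rigorously pinning down that two geometric stability conditions with $\mathrm{GL}^+(2,\mathbb{R})$-related central charges and a common distinguished phase for skyscrapers must differ by the $\widetilde{\mathrm{GL}}^+(2,\mathbb{R})$-action — since this requires knowing that the heart is forced, and one must be careful about whether this uses only the listed axioms or some structural input about geometric stability conditions on surfaces (e.g. that $\mathcal{O}_x$ being stable of phase near $1$ pins down $\Coh^{B,\omega}(X)$ up to shift). The remaining steps are essentially formal consequences of the definitions of the two group actions and of good moduli spaces.
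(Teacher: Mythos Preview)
Your proposal is correct and follows essentially the same route as the paper. The ``main obstacle'' you anticipate---that the heart is forced once the central charge and the phase of skyscrapers are fixed---is exactly what the paper isolates as Lemma~\ref{lem:MSLem6-20} (Macr\`i--Schmidt, Lemma~6.20): a geometric stability condition whose skyscrapers have phase $1$ and whose central charge has imaginary part $\omega\ch_1^B$ must have heart $\Coh^{B,\omega}(X)$; the paper simply chooses the lift $g$ of $T$ so that $\Upsilon\cdot\sigma\cdot g^{-1}$ has skyscrapers of phase $1$ and then invokes that lemma, rather than appealing to a more general rigidity principle. (One small bookkeeping point: with the paper's convention $(Z,\Pc)\cdot(T,f)=(T^{-1}Z,\Pc')$, the element $g$ in \eqref{eq:stabeq-1} is a lift of $T$, not of $T^{-1}$.) Your argument for the moduli isomorphism via flat families and uniqueness of good moduli spaces is more explicit than the paper's one-line appeal to $\Upsilon$ respecting $S$-equivalence classes, but the content is the same.
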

\begin{proof}
By assumption, we have $Z_{a,B,\omega}(\_)= TZ(\Upsilon^{-1}(\_))$.  Since $(Z_{a,B,\omega},\Coh^{B,\omega}(X))$ is a geometric Bridgeland stability condition by Lemma \ref{lem:ZaBom}, there exists a lift $g=(T,f) \in \wt{\mathrm{GL}}^+\!(2,\RR)$ of $T$ such that $\Upsilon \cdot \sigma \cdot g^{-1}$, which has $TZ\Upsilon^{-1}$ as the central charge, has the property that all skyscraper sheaves $\OO_x$ are stable of phase 1 with respect to it.  Writing  $\Upsilon \cdot \sigma \cdot g^{-1}=(Z',\Pc')$, we have $Z'=Z_{a,B,\omega}$ while $\Pc'(0,1]$ coincides with $\Coh^{B,\omega}(X)$ by Lemma \ref{lem:MSLem6-20} below.  That is, we have $\Upsilon \cdot \sigma \cdot g^{-1} = \sigma_{a, B, \omega}$, giving us --.

Let $\phi$ denote the phase function associated to the Bridgeland stability codition $\sigma$.  For an object $E \in D^b(X)$, we now have
\begin{align*}
&E \in \Pc (0,1] \text{ and } \ch(E) = v \\
  &\Leftrightarrow \Upsilon (E) \text{ is $(\Upsilon\cdot \sigma)$-semistable of phase $\phi_\sigma (E)$, and } \ch(\Upsilon (E))=\Upsilon (v) \\
  &\Leftrightarrow \Upsilon (E) \text{ is  $(\Upsilon\cdot \sigma\cdot g^{-1})$-semistable of phase $f(\phi_\sigma (E))$, and } \ch(\Upsilon (E))=\Upsilon (v) \\
  &\Leftrightarrow \Upsilon (E)[j] \text{ is $(Z_{a,\omega,B},\Coh^{B,\omega}(X))$-semistable of phase $f(\phi_\sigma (E))+j$, and }   \ch(\Upsilon (E)[j])=(-1)^j\Upsilon (v)
\end{align*}
where $j$ is the unique integer such that $f(\phi_\sigma (E))+j \in (0,1]$.  Since by definition, the action of $\Upsilon$ respects S-equivalence classes, then the isomorphism of moduli spaces follows.
\end{proof}

\begin{rem}
From the proof of Proposition \ref{isoUpsilon}, we see that the choice of the integer $j$ ensures $\Upsilon (E) [j]$ lies in the heart $\Coh^{B,\omega}(X)$.  Since every object in the heart $\Coh^{B,\omega}(X)$ satisfies $\omega \ch_1^B \geq 0$, the sign $(-1)^j$ coincides with the sign of  $\omega \ch_1^B(\Upsilon (E))$.
\end{rem}

The following lemma tell us that the imaginary part of the central charge of a geometric stability condition completely determines its heart. This was originally stated for the central charges $Z_{B,\omega}$, but the proof goes through without change under slightly more general hypotheses:

\begin{lem}\cite[Lemma 6.20]{MSlec}\label{lem:MSLem6-20}
Let $(Z,\Ac)$ be a geometric stability condition such that all skyscraper sheaves have phase 1.  If  $\Im Z = \omega \ch_1^B$ for some ample divisor $\omega$ and $\RR$-divisor $B$, then $\Ac = \Coh^{B,\omega}(X)$.
\end{lem}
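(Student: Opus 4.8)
The plan is to show $\Ac=\Coh^{B,\omega}(X)$ in two stages: first that $\Ac$, like $\Coh^{B,\omega}(X)$, is a tilt of $\Coh(X)$ at some torsion pair $(\Tc,\Fc)$, and then that the hypothesis $\Im Z=\omega\ch_1^B$ together with positivity forces $(\Tc,\Fc)=(\Tc_{B,\omega},\Fc_{B,\omega})$. This is essentially the argument of \cite[Lemma 6.20]{MSlec}; I only need to check that it uses the central charge solely through its imaginary part, which will be visible at each step. (A short alternative, once one inclusion $\Ac\subseteq\Coh^{B,\omega}(X)$ is in hand, is the standard fact that two nested hearts of bounded $t$-structures coincide.)

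\smallskip
\noindent\emph{Step 1: $\Ac$ is a tilt.} The input is geometricity. Each $\OO_x$ lies in $\Ac$ and is $\sigma$-stable of phase $1$, so it is a simple object of $\Ac$ and $\Hom(E,\OO_x[k])=0=\Hom(\OO_x,E[k])$ for every $E\in\Ac$ and $k<0$. Feeding the extreme standard cohomology sheaves $\mathcal H^{q_0}(E),\mathcal H^{p_0}(E)$ of an object $E\in\Ac$ into the hypercohomology spectral sequences for $\mathrm{Ext}^\bullet(E,\OO_x)$ and $\mathrm{Ext}^\bullet(\OO_x,E)$, and using Serre duality on the surface (which makes $\mathrm{Ext}^2(\OO_x,\mathcal H^{p_0}(E))\neq0$ for general $x$ in the support) together with the stability of $\OO_x$, one obtains $\mathcal H^i(E)=0$ for $i\notin\{-1,0\}$; none of this sees $\Re Z$. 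Then $\Tc:=\Coh(X)\cap\Ac$ and $\Fc:=\{F\in\Coh(X):F[1]\in\Ac\}$ form a torsion pair in $\Coh(X)$ with $\Ac=\langle\Fc[1],\Tc\rangle$.

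\smallskip
\noindent\emph{Step 2: identifying the torsion pair.} Here $\Im Z=\omega\ch_1^B$ enters through: for $0\neq E\in\Ac$, positivity gives $\omega\ch_1^B(E)=\Im Z(E)\geq0$, with equality forcing $Z(E)\in\RR_{<0}$ (phase $1$), and dually $\omega\ch_1^B(F)\leq0$ whenever $F[1]\in\Ac$. Comparing the $\Tc$-$\Fc$ decomposition $0\to T\to E\to F\to0$ of a sheaf $E$ with its $\mu_{B,\omega}$-Harder--Narasimhan filtration, and using that (i) $\OO_x\in\Tc$, so a nonzero $0$-dimensional sheaf is not in $\Fc$, (ii) $\Im Z$ is negative on the shift of a pure $1$-dimensional sheaf, so such a sheaf is not in $\Fc$, and (iii) a $\mu_{B,\omega}$-semistable torsion-free sheaf of slope $0$ is not in $\Tc$, one obtains $\Tc_{B,\omega}\subseteq\Tc$ and $\Fc_{B,\omega}\subseteq\Fc$; since both pairs are torsion pairs in $\Coh(X)$ these inclusions are equalities, giving $\Ac=\langle\Fc_{B,\omega}[1],\Tc_{B,\omega}\rangle=\Coh^{B,\omega}(X)$. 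For (iii): if such a $Q\neq0$ lay in $\Tc$, then --- since by Step 1 any positive-rank object of $\Ac$ has nonzero $\mathcal H^0$ --- one could split off a simple quotient $\OO_{x_n}$ at each stage to build $Q=Q_0\twoheadrightarrow\OO_{x_1}$, $Q_1:=\ker$, $Q_1\twoheadrightarrow\OO_{x_2},\dots$ inside $\Ac$ with $\Im Z(Q_n)=0$ for all $n$ but $\Re Z(Q_n)=\Re Z(Q)+n\,\lvert\Re Z(\OO_x)\rvert\to+\infty$, contradicting $\Re Z<0$ for phase-$1$ objects of $\Ac$ once $n\gg0$.

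\smallskip
\noindent\emph{Main obstacle.} Step 1 is the hard part: naive $\Hom$-vanishing against the $\OO_x$ only confines the standard cohomology of $E\in\Ac$ to three consecutive degrees, and squeezing it down to $\{-1,0\}$ requires the sharper input of Serre duality on $X$ and of the \emph{stability} (not just membership in $\Ac$) of the skyscrapers, carried out as in \cite{MSlec}. Step 2 is then essentially bookkeeping, the one genuinely new point being the slope-zero chain argument.
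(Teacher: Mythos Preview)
The paper does not supply its own proof of this lemma; it merely cites \cite[Lemma~6.20]{MSlec} and remarks that ``the proof goes through without change under slightly more general hypotheses,'' namely when only $\Im Z$ is assumed to have the standard form. Your proposal is precisely a reconstruction of that cited argument, and it is correct in outline and in essentially all details, so there is nothing to compare against.

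Two small points worth tightening. First, in Step~2 you state the inclusions as $\Tc_{B,\omega}\subseteq\Tc$ and $\Fc_{B,\omega}\subseteq\Fc$, but your ingredients (i)--(iii) actually establish the \emph{opposite} inclusions $\Fc\subseteq\Fc_{B,\omega}$ and $\Tc\subseteq\Tc_{B,\omega}$: (i) and (ii) show every $F\in\Fc$ is torsion-free, the positivity $\Im Z(F[1])\geq0$ bounds its maximal slope, and (iii) rules out a slope-zero semistable \emph{quotient} of anything in $\Tc$. Since both are torsion pairs either direction suffices, but the write-up should match the argument. Second, the parenthetical ``since by Step~1 any positive-rank object of $\Ac$ has nonzero $\mathcal H^0$'' in your justification of (iii) is beside the point: what you actually use to build the chain $Q\twoheadrightarrow\OO_{x_1}$, $Q_1\twoheadrightarrow\OO_{x_2},\dots$ is simply that a positive-rank coherent sheaf is locally free at a general point and hence surjects onto any $\OO_x$ in $\Coh(X)$; the kernel is then automatically a sheaf lying in $\Ac\cap\Coh(X)=\Tc$, and the $\Re Z$ contradiction follows as you wrote.
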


\section{Estimating the Gieseker chamber}

\begin{defn}
Let $B$ be a $\mathbb{Q}$-divisor. A torsion-free sheaf $E$ is called $B$-twisted Gieseker semistable with respect to an ample class $\omega$ if and only if
\begin{itemize}
    \item[(i)] $E$ is $\mu_\omega$-semistable, and
    \item[(ii)] For every subsheaf $A\hookrightarrow E$ such that $\mu_\omega(A)=\mu_{\omega}(E)$ we have

    $$
    \frac{\chi(A\otimes L )}{\ch_0(A)} \leq \frac{\chi(E\otimes L)}{\ch_0(E)},
    $$
    where $L=-B+\frac{K_X}{2}$ and $\chi(\_\otimes L)$ is computed using the Hirzebruch-Riemann-Roch formula.
\end{itemize}




Likewise, a pure 1-dimensional sheaf $\mathcal{E}$ is called $B$-twisted Gieseker semistable with respect to $\omega$ if and only if for every subsheaf $\mathcal{F}\hookrightarrow \mathcal{E}$ we have
$$
\frac{\chi(\mathcal{F}\otimes L)}{\omega c_1(\mathcal{F})}\leq \frac{\chi(\mathcal{E}\otimes L)}{\omega c_1(\mathcal{E})}.
$$
\end{defn}

\begin{rem}
The notion of twisted Gieseker semistability was introduced by Matsuki and Wentworth in \cite{MW}, where they proved the existence of projective moduli spaces parametrizing S-equivalence classes of $B$-twisted Gieseker semistable sheaves, of a given Chern character $v$ with respect to a given polarization $\omega$. We will denote these spaces by $M_{B,{\omega}}(v)$. Notice that with our notation $M_{{K_X/2},{\omega}}(v)$ is the usual Gieseker moduli space.
\end{rem}

\begin{rem}\label{largevolumelimit} As proven by Bridgeland \cite[Prop.\  14.2]{SCK3} and Lo-Qin \cite[Theorems 1.1, 1.2(i)]{LQ}, the only objects with non-negative rank and positive degree (with respect to some polarization $\omega$ and $\mathbb{R}$-divisor $B$) that remain $\sigma_{a,B,\omega}$-semistable for large values of the parameter $a$ are precisely the $B$-twisted Gieseker semistable sheaves with respect to $\omega$. Together with the estimates of Lemma \ref{estimates a-mini-walls} below, this can be rephrased as follows: Given a Chern character $v$ satisfying $\ch_0(v)>0$, $\omega\ch_1^B(v)>0$ and $\Delta(v)\geq 0$, then for $a\gg 0$
$$
\mathcal{M}_{a,B,
\omega}(v)\cong M_{B,\omega}(v).
$$
\end{rem}

We can also prove a negative rank version of Remark \ref{largevolumelimit}.  Let us define  $E^D=R\mathcal{H}om(E,\mathcal{O})[1]$ for every $E\in D^b(X)$.

\begin{prop}\label{duality}
Let $v$ be a Chern character satisfying $\ch_0(v)<0$, $\omega\ch_1^B(v)>0$, and $\Delta(v)\geq 0$. Then there exists $a\gg 0$ such that
$$
\mathcal{M}_{a,B,\omega}(v)\cong M_{-B,\omega}(v^D).
$$
\end{prop}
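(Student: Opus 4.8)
The plan is to reduce to the positive-rank statement of Remark \ref{largevolumelimit} by applying the derived dual functor $(\_)^D = R\HHom(\_,\OO)[1]$ and tracking how it interacts with the stability conditions $\sigma_{a,B,\omega}$. First I would record the effect of $(\_)^D$ on Chern characters: for $v$ with $\ch_0(v) < 0$, $\omega \ch_1^B(v) > 0$, $\Delta(v)\geq 0$, the dual character $v^D$ satisfies $\ch_0(v^D) = -\ch_0(v) > 0$, $\ch_1^B(v^D) = -\ch_1^{-B}(v^D)$ lines up so that $\omega\ch_1^{-B}(v^D) > 0$, and $\Delta(v^D) = \Delta(v) \geq 0$; this is the routine computation that puts $v^D$ in the regime covered by Remark \ref{largevolumelimit} with the divisor $B$ replaced by $-B$. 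Here I would note that $K_X/2$ may enter: since $E^D$ involves $R\HHom$, the natural twisting divisor on the dual side is $-B$ once one remembers the definition $L = -B + K_X/2$ used in the twisted Gieseker stability — the Serre-duality shift is exactly what converts $-B+K_X/2$ into the correct polarization datum on the dual side, so the target moduli space is $M_{-B,\omega}(v^D)$ as claimed.

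The main step is to show that $(\_)^D$ carries $\sigma_{a,B,\omega}$-semistable objects of character $v$ to $\sigma_{a',-B,\omega}$-semistable objects of character $v^D$ for suitable $a'$, up to shift. I would argue this via the general fact that $R\HHom(\_,\OO)$ is an anti-autoequivalence of $D^b(X)$ that sends the central charge $Z_{a,B,\omega}$ to (a $\mathrm{GL}^+(2,\RR)$-transform of) $\overline{Z_{a,-B,\omega}}$, i.e. complex conjugation composed with a real linear map — duality reverses the order of phases, and conjugation reverses it back, so semistability is preserved. Concretely: the heart $\Coh^{B,\omega}(X)^D$ obtained by dualizing is again a heart of a bounded t-structure, and by a lemma of the type ``dualizing a geometric stability condition is geometric'' (skyscrapers $\OO_x$ satisfy $\OO_x^D = \OO_x[\dim X - 1]$ up to shift, hence remain stable of a fixed phase), one can invoke Lemma \ref{lem:MSLem6-20} to identify $\Coh^{B,\omega}(X)^D[k]$ with $\Coh^{-B,\omega}(X)$ for the appropriate shift $k$, after matching imaginary parts of central charges. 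Then Proposition \ref{isoUpsilon} applies with $\Upsilon = (\_)^D$ (or rather $R\HHom(\_,\OO)$, suitably shifted so that it is a genuine autoequivalence after composing with the duality on $D^b(X)$, which is standard), delivering the isomorphism of good moduli spaces once we know both sides are nonempty in the large-$a$ regime.

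The cleanest route is probably: (1) fix $v$ as in the hypotheses and pick $a_0$ large enough (using Lemma \ref{estimates a-mini-walls}, which the excerpt references) that for $a > a_0$ the $\sigma_{a,B,\omega}$-semistable objects of character $v$ are exactly the shifts by $[1]$ of torsion-free sheaves; this is the negative-rank mirror of Remark \ref{largevolumelimit} and follows because an object of negative rank in $\Coh^{B,\omega}(X)$ with positive degree must, in the large-volume limit, be $F[1]$ with $F$ a $\mu_\omega$-semistable torsion-free sheaf; (2) apply $(\_)^D$: since $F[1]^D = R\HHom(F,\OO)[2] = F^{\vee}[2]$ and $F$ torsion-free means $F^\vee$ is a torsion-free sheaf (reflexive, in fact) with the dual Chern character, we land on honest sheaves of character $v^D$; (3) check that $(\_)^D$ takes the Gieseker/Bridgeland-stable locus isomorphically onto the $(-B)$-twisted Gieseker-stable locus by a phase-reversal argument plus the fact that $(\_)^{DD} \cong \mathrm{id}$ on the relevant subcategory, so the correspondence is bijective and respects S-equivalence; (4) conclude via Remark \ref{largevolumelimit} applied to $v^D$ and $-B$. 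The main obstacle I anticipate is step (3): verifying that derived duality interchanges the two twisted Gieseker stabilities exactly — in particular handling the $K_X/2$ in the definition of $L$ and confirming that no spurious wall is crossed, i.e. that ``large $a$ for $v$'' really does correspond to ``large $a$ for $v^D$'' under the dual central charge. This should follow from the explicit formula for $Z_{a,B,\omega}(E^D)$ in terms of $Z_{a,-B,\omega}(E)$, but that computation — and the associated bookkeeping of which shift lands an object in the heart — is where the real work lies.
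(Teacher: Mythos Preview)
Your overall strategy---reduce to the positive-rank case of Remark \ref{largevolumelimit} by applying the derived dual $(\_)^D$---is exactly what the paper does. The paper's proof, however, is much shorter: it simply cites \cite[Theorem 4.1]{martinez2017duality}, which gives directly that for objects of phase in $(0,1)$ the duality functor induces an isomorphism $\mathcal{M}_{a,B,\omega}(v)\cong \mathcal{M}_{a,-B,\omega}(v^D)$ for \emph{every} $a$. One then checks the numerical conditions $\ch_0(v^D)>0$, $\omega\ch_1^{-B}(v^D)>0$, $\Delta(v^D)\geq 0$ (exactly as you do) and applies Remark \ref{largevolumelimit} on the dual side. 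The $K_X/2$ worry is a red herring: the functor $E^D=R\HHom(E,\OO)[1]$ involves no canonical twist, and the Chern-character computation $\ch_i^{-B}(E^D)=(-1)^{i+1}\ch_i^B(E)$ shows immediately that the $B$-field flips sign.

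Your ``cleanest route'' has two genuine problems. Step (1) asserts that for $a\gg 0$ the $\sigma_{a,B,\omega}$-semistable objects of negative rank are shifts $F[1]$ of $\mu$-semistable torsion-free sheaves---but this is precisely the content of the proposition you are proving, so invoking it is circular. Step (2) is also not quite right: for $E=F[1]$ with $F$ torsion-free, $E^D=R\HHom(F,\OO)$, which on a surface has $\mathcal{H}^0=F^\vee$ and $\mathcal{H}^1=\EExt^1(F,\OO)$; the latter is a nonzero $0$-dimensional sheaf whenever $F$ is not locally free, so $E^D$ need not be a sheaf. Your first approach (prove that $(\_)^D$ carries $\sigma_{a,B,\omega}$ to $\sigma_{a,-B,\omega}$ up to a $\wt{\mathrm{GL}}^+(2,\RR)$-action) is the correct one, and is what \cite{martinez2017duality} establishes; note however that Proposition \ref{isoUpsilon} is stated for autoequivalences, whereas $(\_)^D$ is an anti-autoequivalence, so you cannot invoke it verbatim---the cited reference handles this by working with the contravariant functor directly.
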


\begin{proof}
Since $\omega\ch_1^B(v)>0$ then every object $E\in \mathcal{B}_{\omega,B}$ with $\ch(E)=v$ has phase in $(0,1)$. Thus, by \cite[Theorem 4.1]{martinez2017duality} the duality functor induces an isomorphism 
$$
\mathcal{M}_{a,B,\omega}(v)\cong \mathcal{M}_{a,-B,\omega}(v^D).
$$
Now, notice that  $\ch_0(v^D)=-\ch_0(v)>0$, $\omega\ch_1^{-B}(v^D)=\omega\ch_1^B(v)>0$, and $\Delta(v^D)=\Delta(v)\geq 0$. Therefore, Remark \ref{largevolumelimit} implies that for $a\gg 0$ we have an isomorphism $\mathcal{M}_{a,-B,\omega}(v^D)\cong M_{-B,\omega}(v^D)$,
and so $\mathcal{M}_{a,B,\omega}(v)\cong M_{-B,\omega}(v^D)$.
\end{proof}

\begin{rem}\label{rem:BGineq}
Given $\RR$-divisors $B, \omega$ on $X$, the corresponding discriminant $\Delta_{B,\omega}$ of an object $E \in D^b(E)$ is
\[
  \Delta_{B,\omega}(E) := (\omega \ch_1^B(E))^2 - 2\omega^2(\ch_0^B(E))(\ch_2^B(E)).
\]
When $\omega$ is ample, every $\sigma_{B,\omega}$-semistable object $E$ satisfies the Bogomolov-Gieseker inequality
\[
  \Delta_{B,\omega}(E) \geq 0
\]
(see \cite[Theorem 6.13]{MSlec}).  From the proof of Lemma \ref{lem:ZaBom}, we know that a Bridgeland stability of the form $(Z_{a,B,\omega}, \Coh^{B,\omega}(X))$ differs from a Bridgeland stability of the form $\sigma_{B', \omega'}$ by a $\wt{\mathrm{GL}}^+\!(2,\RR)$-action, which does not change the set of semistable objects.  As a result, any $(Z_{a,B,\omega}, \Coh^{B,\omega}(X))$-semistable object also satisfies the Bogomolov-Gieseker inequality.
\end{rem}
\begin{lem}\label{equalslope}
If $E$ is a $B$-twisted $\omega$-semistable sheaf of positive rank and $A\hookrightarrow E$ is a subobject of $E$ in $\Coh^{B,\omega}(X)$ such that $\mu_{B,\omega}(A)=\mu_{B,\omega}(E)$ and $\phi_{a,B,\omega}(A)=\phi_{a,B,\omega}(E)$, then $A$ is a subsheaf of $E$ that makes $E$ a properly $B$-twisted $\omega$-semistable sheaf. In particular, $A$ can never destabilize $E$ for any value of $a$.  
\end{lem}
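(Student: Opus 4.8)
The plan is to establish the two assertions in turn: first that the subobject $A\hookrightarrow E$ in $\Coh^{B,\omega}(X)$ is genuinely a subsheaf of $E$, and then that the equality of phases forces $A$ to realize equality in condition (ii) of the definition of $B$-twisted Gieseker semistability with respect to $\omega$. Throughout, $A$ is taken to be a nonzero proper subobject.

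\emph{Step 1: $A$ is a proper subsheaf of $E$.} Since $E$ is a sheaf lying in $\Coh^{B,\omega}(X)$, it belongs to $\Tc_{B,\omega}$, so every quotient of $E$ — and hence $E$ itself — has positive $\mu_{B,\omega}$; in particular $\omega\ch_1^B(E)>0$. Combining $\mu_{B,\omega}(A)=\mu_{B,\omega}(E)>0$ with the inequality $\omega\ch_1^B(A)=\Im Z_{a,B,\omega}(A)\geq 0$ (valid on $\Coh^{B,\omega}(X)$) forces $\ch_0(A)>0$ and $\omega\ch_1^B(A)>0$; thus both $A$ and $E$ have phase in $(0,1)$ for every $a>0$. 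I would then pass $0\to A\to E\to Q\to 0$ through the long exact sequence of sheaf cohomology for the standard t-structure. Since $\mathcal{H}^{-1}(E)=0$, this gives $\mathcal{H}^{-1}(A)=0$, so $A$ is a sheaf, together with an exact sequence of sheaves $0\to K\to A\to E$ in which $K:=\mathcal{H}^{-1}(Q)\in\Fc_{B,\omega}$. The claim is $K=0$. If not, then $K$ is a nonzero torsion-free sheaf with $\ch_0(K)>0$ and $\omega\ch_1^B(K)\leq 0$, and the sheaf image $A':=A/K\hookrightarrow E$ is nonzero (otherwise $A=K\in\Fc_{B,\omega}$, contradicting $0\neq A\in\Tc_{B,\omega}$), hence of positive rank with $\ch_0(A')<\ch_0(A)$ and $\omega\ch_1^B(A')\geq\omega\ch_1^B(A)>0$. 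Then $\mu_{B,\omega}(A')>\mu_{B,\omega}(A)=\mu_{B,\omega}(E)$, contradicting the $\mu_{B,\omega}$-semistability of $E$ (which holds because $E$ is $B$-twisted $\omega$-semistable of positive rank). Therefore $K=0$, the map $A\to E$ is an injection of sheaves, and since $Q=E/A$ is then a nonzero sheaf, $A$ is a proper subsheaf of $E$.

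\emph{Step 2: the phase condition does not depend on $a$.} Because both $A$ and $E$ have phase in $(0,1)$, the equality $\phi_{a,B,\omega}(A)=\phi_{a,B,\omega}(E)$ is equivalent to the collinearity relation
\[
\Re Z_{a,B,\omega}(A)\cdot\omega\ch_1^B(E)-\omega\ch_1^B(A)\cdot\Re Z_{a,B,\omega}(E)=0.
\]
In this relation the coefficient of $a$ equals $\ch_0(A)\,\omega\ch_1^B(E)-\ch_0(E)\,\omega\ch_1^B(A)=\ch_0(A)\ch_0(E)\big(\mu_{B,\omega}(E)-\mu_{B,\omega}(A)\big)$, which vanishes by hypothesis. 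Hence, granted $\mu_{B,\omega}(A)=\mu_{B,\omega}(E)$, the equality $\phi_{a,B,\omega}(A)=\phi_{a,B,\omega}(E)$ holds for one value of $a>0$ if and only if it holds for every $a>0$, if and only if $\ch_2^B(A)\,\omega\ch_1^B(E)=\ch_2^B(E)\,\omega\ch_1^B(A)$; dividing by the positive numbers $\omega\ch_1^B(A),\omega\ch_1^B(E)$ and using the slope equality, this is the same as $\ch_2^B(A)/\ch_0(A)=\ch_2^B(E)/\ch_0(E)$. In particular $\phi_{a,B,\omega}(A)=\phi_{a,B,\omega}(E)$ for all $a$, so $A$ can never have phase strictly larger than that of $E$, and therefore can never destabilize $E$; this is the final assertion.

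\emph{Step 3: conclusion.} A Hirzebruch-Riemann-Roch computation with $L=-B+K_X/2$ shows that the degree-one component of $e^{K_X/2}\,\mathrm{td}(X)$ vanishes, so that $\chi(G\otimes L)=\ch_2^B(G)+\big(\chi(\OO_X)-\tfrac{1}{8}K_X^2\big)\ch_0(G)$ for every coherent sheaf $G$. Consequently $\chi(A\otimes L)/\ch_0(A)=\chi(E\otimes L)/\ch_0(E)$ is equivalent to $\ch_2^B(A)/\ch_0(A)=\ch_2^B(E)/\ch_0(E)$, which was established in Step 2. Since also $\mu_\omega(A)=\mu_\omega(E)$ (equivalent to $\mu_{B,\omega}(A)=\mu_{B,\omega}(E)$), the proper subsheaf $A\hookrightarrow E$ has the same $\mu_\omega$-slope as $E$ and realizes equality in condition (ii); hence $E$ is not $B$-twisted $\omega$-Gieseker stable, and since it is $B$-twisted $\omega$-semistable by hypothesis, it is properly $B$-twisted $\omega$-semistable, with $A$ the subsheaf witnessing this. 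I expect Step 1 to be the main obstacle: the delicate point is ruling out $\mathcal{H}^{-1}(Q)\neq 0$, and the crucial input is the slope inequality that $\mathcal{H}^{-1}(Q)\in\Fc_{B,\omega}$ forces against the $\mu_{B,\omega}$-semistability of $E$.
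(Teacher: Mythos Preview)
Your proof is correct and follows essentially the same approach as the paper's: both use the long exact sequence to see that $A$ is a sheaf, then rule out $\mathcal H^{-1}(Q)\neq 0$ by observing that the sheaf image $A/\mathcal H^{-1}(Q)\hookrightarrow E$ would violate the $\mu_{B,\omega}$-semistability of $E$, and finally use the equal-slope hypothesis to make the phase equation independent of $a$ and deduce $\ch_2^B(A)/\ch_0(A)=\ch_2^B(E)/\ch_0(E)$. Your Step~3 is slightly more explicit than the paper, which simply asserts the last equality means $E$ is properly $B$-twisted $\omega$-semistable; your Hirzebruch--Riemann--Roch computation showing that $e^{K_X/2}\,\mathrm{td}(X)$ has vanishing degree-one part, hence $\chi(-\otimes L)/\ch_0$ differs from $\ch_2^B/\ch_0$ by a constant, makes the link to condition~(ii) of the definition transparent.
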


\begin{proof}
Consider the short exact sequence $0\rightarrow A\rightarrow E\rightarrow B\rightarrow 0$ in $\Coh^{B,\omega}(X)$. This sequence induces the long exact sequence in $\Coh(X)$:
$$
0\rightarrow \mathcal{H}^{-1}(A)\rightarrow 0\rightarrow \mathcal{H}^{-1}(B)\rightarrow \mathcal{H}^0(A)\rightarrow E\rightarrow \mathcal{H}^0(B)\rightarrow 0.
$$
In particular, $\mathcal{H}^{-1}(A)=0$ and so $A$ is quasi-isomorphic to the sheaf $\mathcal{H}^0(A)$. The sheaf quotient $L:= A/\mathcal{H}^{-1}(B)$ fits into a diagram
$$
\xymatrix{
0\ar[r] &\mathcal{H}^{-1}(B)\ar[r] & A\ar[rr]\ar@{->>}[rd] & & E\ar[r] &\mathcal{H}^0(B)\ar[r]& 0.\\
& & & L\ar@{^{(}->}[ur]& & &
}
$$
If $\mathcal{H}^{-1}(B)$ is nonzero, then $\mu_{B,\omega}(\mathcal{H}^{-1}(B))\leq 0<\mu_{B,\omega}(A)$ because $\mathcal{H}^{-1}(B)\in \mathcal{F}_{B,\omega}$ and $A\in\mathcal{T}_{B,\omega}$. Thus, 
$$
\mu_{B,\omega}(E)=\mu_{B,\omega}(A)<\mu_{B,\omega}(L),
$$
contradicting the semistability of $E$. Therefore $\mathcal{H}^{-1}(B)=0$ and $A$ is a subsheaf of $E$. On the other hand, since $\phi_{a,B,\omega}(A)=\phi_{a,B,\omega}(E)$ we have that
\begin{equation}\label{forRemarkonGenericity}
\frac{\displaystyle\frac{\ch_2^{B}(A)}{\ch_0(A)}-a}{\mu_{B,\omega}(A)}=\frac{\displaystyle\frac{\ch_2^{B}(E)}{\ch_0(E)}-a}{\mu_{B,\omega}(E)},
\end{equation}
which is independent of $a$ because $\mu_{B,\omega}(A)=\mu_{B,\omega}(E)$. Therefore,  $\displaystyle \frac{\ch_2^{B}(A)}{\ch_0(A)}=\frac{\ch_2^{B}(E)}{\ch_0(E)}$ and $E$ is properly $B$-twisted $\omega$-Gieseker semistable.
\end{proof}
\begin{rem}\label{genericity}
Suppose that $E\in \Coh^{B,\omega}(X)$ is a properly $B$-twisted $\omega$-semistable sheaf of Chern character $v$. Then there is a subsheaf $A\hookrightarrow E$ such that
\begin{equation}\label{properlyGieseker}
\mu_{B,\omega}(A)=\mu_{B,\omega}(E)\ \ \text{and}\ \ \frac{\ch_2^B(A)}{\ch_0(A)}=\frac{\ch_2^B(E)}{\ch_0(E)},
\end{equation}
in which case the equality in equation \eqref{forRemarkonGenericity} is satisfied. Since $E$ is slope semistable then so is every subsheaf of the same slope, thus $A$ is a $\sigma_{a,B,\omega}$-destabilizing subobject of $E$ for all $a>0$. In other words, the 1-parameter family of stability conditions $\{\sigma_{a,B,
\omega}\}_{a>0}$ is contained on a Bridgeland wall. Conversely, if $A\hookrightarrow E$ is a subobject of $E$ in $\Coh^{B,\omega}(X)$ such that $\phi_{a,B,\omega}(A)=\phi_{a,B,\omega}(E)$ for all $a>a_0$ then Equation \eqref{forRemarkonGenericity} implies that the equalities in \eqref{properlyGieseker} hold. Moreover, such $A$ is a sheaf and the quotient $T=E/A\in \Coh^{B,\omega}(X)$ is  $\sigma_{a,B,\omega}$-semistable for all $a\gg 0$ (where $E$ is semistable) and so it must be a sheaf as well because otherwise
$$
\lim_{a\rightarrow +\infty}\frac{\ch_2^B(\mathcal{H}^{-1}(T)[1])-a\ch_0(\mathcal{H}^{-1}(T)[1])}{\omega\ch_1^B(\mathcal{H}^{-1}(T)[1])}=+\infty.
$$
Therefore, $A$ is a subsheaf destabilizing $E$ with respect to $B$-twisted $\omega$-Gieseker semistability. Thus, $\{\sigma_{a,B,\omega}\}_{a>a_0}$ is contained in a Bridgeland chamber for $v$ if and only if there are not properly $B$-twisted $\omega$-semistable sheaves of Chern character $v$.    
\end{rem}

\begin{lem}\label{estimates a-mini-walls}
Let $B,\omega\in \mathrm{NS}(X)_{\mathbb{Q}}$ with $\omega$ ample. Let $E$ be a $B$-twisted $\omega$-Gieseker semistable sheaf with $\ch_0(E)>0$ and $\omega\ch_1^B(E)> 0$.  Then $E$ is $\sigma_{a,B,\omega}$-semistable for 
$$
a> \frac{n}{2}\mu_{B,\omega}(E)\Delta_{B,\omega}(E),
$$
where $n\in \mathbb{N}$ is chosen so that $n (\omega\ch_1(C)) \omega^2\in\mathbb{Z}$ for all $C\in D^b(X)$.  When this inequality is attained, the moduli space of  $B$-twisted $\omega$-Gieseker semistable sheaves coincides with the moduli space of   $\sigma_{a, B, \omega}$-semistable objects.
\end{lem}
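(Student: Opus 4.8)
The plan is to prove the first assertion by contraposition: I will show that if $E$ admits a $\sigma_{a,B,\omega}$-destabilizing subobject, then necessarily $a<\tfrac n2\mu_{B,\omega}(E)\Delta_{B,\omega}(E)$. The content beyond the classical large-volume fact (Remark \ref{largevolumelimit}) is the explicit constant, and the work lies in extracting it. First I would note that $E$ lies in the heart: being $\mu_\omega$-semistable with $\mu_{B,\omega}(E)>0$, every sheaf quotient of $E$ has positive (or infinite) slope, so $E\in\Tc_{B,\omega}\subseteq\Coh^{B,\omega}(X)$, and since $\Im Z_{a,B,\omega}(E)=\omega\ch_1^B(E)>0$ its phase lies in $(0,1)$. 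Assume $E$ is $\sigma_{a,B,\omega}$-unstable and let $A$ be its maximal destabilizing subobject (first HN factor), so $A$ is $\sigma_{a,B,\omega}$-semistable with $\phi_{a,B,\omega}(A)>\phi_{a,B,\omega}(E)$. Reading cohomology sheaves off $0\to A\to E\to E/A\to 0$ and using $\mathcal H^{-1}(E)=0$ shows $A$ is a sheaf in $\Tc_{B,\omega}$; with $A'=\mathrm{im}(A\to E)$ and $K=\ker(A\to E)$ one gets $0\to K\to A\to A'\to 0$, $K\in\Fc_{B,\omega}$, and $A'\hookrightarrow E$ a genuine subsheaf of positive rank (otherwise $A'=0$ and $A=K\in\Fc_{B,\omega}\cap\Tc_{B,\omega}=0$); in particular $\rho:=\ch_0(A)>0$.

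Next I would extract the numerical constraints. From $\mu_\omega$-semistability of $E$ one has $\mu_{B,\omega}(A')\le\mu_{B,\omega}(E)$ and $\ch_0(A')\le\ch_0(E)$, while $\omega\ch_1^B(K)\le 0$ since $K\in\Fc_{B,\omega}$; chaining these gives $\delta:=\omega\ch_1^B(A)\le\omega\ch_1^B(A')\le\mu_{B,\omega}(E)\,\ch_0(A')\le\omega\ch_1^B(E)=:d$ together with $\mu_{B,\omega}(A)\le\mu_{B,\omega}(E)$. If $\mu_{B,\omega}(A)=\mu_{B,\omega}(E)$, all the intermediate inequalities become equalities, so $K=0$ and $A=A'\hookrightarrow E$ is a subsheaf of the same slope; the $B$-twisted Gieseker semistability of $E$ — which Riemann--Roch rewrites as $\ch_2^B(A)/\ch_0(A)\le\ch_2^B(E)/\ch_0(E)$ — then contradicts $\phi_{a,B,\omega}(A)>\phi_{a,B,\omega}(E)$, since in the equal-slope situation this phase comparison is independent of $a$; this is exactly Lemma \ref{equalslope}. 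So I may assume $\mu_{B,\omega}(A)<\mu_{B,\omega}(E)$.

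Now write $r=\ch_0(E)$, $c=\ch_2^B(E)$, $\gamma=\ch_2^B(A)$. With both phases in $(0,1)$ and positive imaginary parts, $\phi_{a,B,\omega}(A)>\phi_{a,B,\omega}(E)$ rearranges to $a(d\rho-\delta r)<d\gamma-\delta c$, and $d\rho-\delta r=r\big(\tfrac dr\rho-\delta\big)>0$ by the strict slope inequality, so $a<(d\gamma-\delta c)/(d\rho-\delta r)$. Since $A$ is $\sigma_{a,B,\omega}$-semistable, the Bogomolov--Gieseker inequality of Remark \ref{rem:BGineq} gives $\gamma\le\delta^2/(2\omega^2\rho)$; substituting this and using $\Delta_{B,\omega}(E)=d^2-2\omega^2rc$ to eliminate $c$, the right-hand side collapses to $-\tfrac{d\delta}{2\omega^2\rho r}+\tfrac{\delta\,\Delta_{B,\omega}(E)}{2\omega^2r^2W}$ with $W:=\tfrac dr\rho-\delta>0$ and $\Delta_{B,\omega}(E)\ge\omega^2\Delta(E)\ge 0$ (Hodge index plus Bogomolov for $E$); dropping the negative first summand yields $a<\delta\,\Delta_{B,\omega}(E)/(2\omega^2r^2W)$. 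The decisive point is that the $B$-terms cancel in $rW=d\rho-\delta r=\rho(\omega\ch_1(E))-r(\omega\ch_1(A))$, so $n\omega^2(rW)$ is a positive integer by the choice of $n$ and hence $W\ge 1/(n\omega^2r)$; combined with $\delta\le d$ this gives $a<\delta\,\Delta_{B,\omega}(E)/(2\omega^2r^2W)\le\tfrac n2\tfrac dr\Delta_{B,\omega}(E)=\tfrac n2\mu_{B,\omega}(E)\Delta_{B,\omega}(E)$, the desired contradiction. I expect this last estimate to be the main obstacle: it requires the lattice bound $rW\in\tfrac1{n\omega^2}\mathbb Z_{>0}$, the a priori bound $\delta\le d$ coming from $\mu_\omega$-semistability, and the Bogomolov--Gieseker bound on $\ch_2^B(A)$ all simultaneously, and the algebra has to telescope exactly onto the stated constant — each ingredient being easy but their interaction being where the argument actually happens.

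Finally, for the moduli-space assertion: all $B$-twisted $\omega$-Gieseker semistable sheaves of a fixed class $v$ share the same $\mu_{B,\omega}$ and $\Delta_{B,\omega}$, so by the first part they are all $\sigma_{a,B,\omega}$-semistable once $a>\tfrac n2\mu_{B,\omega}(v)\Delta_{B,\omega}(v)$; the reverse inclusion follows from the large-volume results of Bridgeland and Lo--Qin recalled in Remark \ref{largevolumelimit}, together with the observation that no $a$-wall for $v$ can lie above the bound (by the same estimate applied at a putative wall, using the Bogomolov--Gieseker inequality on the destabilizer), so the two descriptions agree for all such $a$ and the good moduli spaces $\mathcal M_{a,B,\omega}(v)$ and $M_{B,\omega}(v)$ coincide.
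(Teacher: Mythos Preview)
Your proof is correct and follows essentially the same approach as the paper's: both extract the bound by combining the Bogomolov--Gieseker inequality $\Delta_{B,\omega}(A)\ge 0$ for the destabilizer, the lattice bound $n\omega^2\bigl(\ch_0(A)\,\omega\ch_1(E)-\ch_0(E)\,\omega\ch_1(A)\bigr)\in\mathbb Z_{>0}$ (noting the $B$-terms cancel), and the a priori inequality $\omega\ch_1^B(A)\le\omega\ch_1^B(E)$. Two small presentational differences: you take $A$ to be the first HN factor, which makes the application of $\Delta_{B,\omega}(A)\ge 0$ legitimate without further comment (the paper works at a wall, where $A$ is semistable for the same reason), and you derive $\mu_{B,\omega}(A)\le\mu_{B,\omega}(E)$ via the image $A'=\mathrm{im}(A\to E)$ rather than via the first $\mu$-HN factor of $A$ as the paper does; both routes are fine.
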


\begin{proof}
We only need to find a bound $a_0$, depending only on the Chern character of $E$, to ensure that $E$ is not $\sigma_{a,B,\omega}$-unstable for any $a>a_0$.

First, suppose that $E$ is $\mu_\omega$-semistable with $\mu_{B,\omega}(E)>0$, then as in the proof of Lemma \ref{equalslope}, the long exact sequence of cohomology for a destabilizing sequence $0\rightarrow A\rightarrow E\rightarrow Q\rightarrow 0$ in $\Coh^{B,\omega}(X)$ gives
$$
0\rightarrow H^{-1}(Q)\rightarrow A\rightarrow E\rightarrow H^0(Q)\rightarrow 0.
$$
Since $E$ is a torsion-free sheaf, this shows that $A$ is a sheaf of positive rank  and, moreover, $\mu_{B,\omega}(A)\leq \mu_{B,\omega}(E)$. Indeed, if $\mu_{B,\omega}(A)>\mu_{B,\omega}(E)$ and $A_1\hookrightarrow A$ is the first Harder-Narasimhan factor of $A$, then $\mu_{B,\omega}(A_1)>\mu_{B,\omega}(E)$ and so $A_1$ is a subsheaf of $H^{-1}(Q)$. This is a contradiction unless $H^{-1}(Q)=0$ since $H^{-1}(Q)\in \mathcal{F}_{B,\omega}$.

Since by Lemma \ref{equalslope}, subobjects with $\mu_{B,\omega}(A)=\mu_{B,\omega}(E)$ can never make $E$ unstable, we can further assume that $\mu_{B,\omega}(A)<\mu_{B,\omega}(E)$.

Notice that since $\omega\ch_1^B(E)>0$ and $Q$ is a destabilizing quotient of $E$ we must have $\omega\ch_1^B(Q)>0$ and so 
\begin{equation*}
0<\omega\ch_1^B(A)<\omega\ch_1^B(E)\ \ \text{and hence}\ \ \mu_{B,\omega}(A)<\omega\ch_1^B(E).
\end{equation*}

Now, the wall equation 
\[
 - \frac{\Re Z_{a,B,\omega}(A)}{\Im Z_{a,B,\omega}(A)}=- \frac{\Re Z_{a,B,\omega}(E)}{\Im Z_{a,B,\omega}(E)}
\]
becomes
$$
a(\mu_{B,\omega}(E)-\mu_{B,\omega}(A))=\frac{\ch_2^B(A)}{\ch_0(A)}\mu_{B,\omega}(E)-\frac{\ch_2^B(E)}{\ch_0(E)}\mu_{B,\omega}(A).
$$
Since we have the Bogomolov-Gieseker inequality $\Delta_{B,\omega}(A)\geq 0$ from Remark \ref{rem:BGineq}, we obtain
\begin{equation}\label{eq:c1}
a(\mu_{B,\omega}(E)-\mu_{B,\omega}(A))\leq \frac{1}{2\omega^2}\mu_{B,\omega}(A)^2\mu_{B,\omega}(E)-\frac{\ch_2^B(E)}{\ch_0(E)}\mu_{B,\omega}(A).
\end{equation}
Then 
\begin{align}
  n(\mu_{B,\omega}(E)-\mu_{B,\omega}(A)) &= n \left( \frac{\omega \ch_1^B(E)}{\ch_0(E)}-\frac{\omega \ch_1^B(A)}{\ch_0(A)}\right) \notag\\
  &= \frac{n}{\ch_0(E)\ch_0(A)}( \omega \ch_1(E)\ch_0(A) - \omega \ch_1(A)\ch_0(E)) \notag\\
  & \geq \frac{1}{\ch_0(E)\ch_0(A) \omega^2} \label{eq:c2}
\end{align}
where the last inequality follows from  our assumption on $n$, namely $n \omega \ch_1(M) \in \tfrac{1}{\omega^2}\mathbb{Z}$ for any $M \in D^b(X)$.

Combining \eqref{eq:c1} and \eqref{eq:c2}, we obtain
\begin{align*}
a&\leq \frac{n}{2}\frac{(\omega\ch_1^B(A))^2\cdot \omega\ch_1^B(E)}{\ch_0(A)}-n\omega^2\ch_2^B(E)\cdot \omega\ch_1^B(A)\\
&= \frac{n}{2} \mu_{B,\omega}(A)\cdot  \omega \ch_1^B(A)\cdot  \omega \ch_1^B (E) - n \omega^2\ch_2^B(E)\cdot \omega\ch_1^B(A) \\
&\leq \frac{n}{2}\mu_{B,\omega}(E) \cdot  \omega \ch_1^B(A)\cdot  \omega \ch_1^B (E) - n \omega^2\ch_2^B(E)\cdot \omega\ch_1^B(A)\\
&= \frac{n}{2} \omega \ch_1^B(A) \left( \frac{(\omega\ch_1^B(E))^2 - 2\omega^2 \ch_2^B(E)\ch_0(E)}{\ch_0(E)}\right)\\
&< \frac{n}{2} \omega\ch_1^B(E)  \left( \frac{(\omega\ch_1^B(E))^2 - 2\omega^2 \ch_2^B(E)\ch_0(E)}{\ch_0(E)}\right) \\
&= \frac{n}{2} \mu_{B,\omega}(E)\Delta_{B,\omega}(E)
\end{align*}
where, in the last inequality, we use the fact that $\Delta_{B,\omega}(E) \geq 0$ because $E$ is a $B$-twisted Gieseker semistable sheaf.  Thus, such a destabilizing sequence does not exist when
$$
a>\frac{n}{2}\mu_{B,\omega}(E)\Delta_{B,\omega}(E).
$$
\end{proof}

\begin{lem}\label{one dim real estimates}
 Let $B,\omega\in\mathrm{NS}(X)_{\mathbb{R}}$ with $\omega$ ample. If $\mathcal{E}$ is a 1-dimensional $B$-twisted Gieseker semistable sheaf, then $\mathcal{E}$ is $\sigma_{a,B,\omega}$-semistable for
$$
a>\max\left\{\frac{(\omega c_1(\mathcal{E}))^2}{2\omega^2},\ \frac{(\omega c_1(\mathcal{E}))^2}{2\omega^2}-\ch_2^B(\mathcal{E})\right\}.
$$
\end{lem}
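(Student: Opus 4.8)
The plan is to run the argument of Lemma~\ref{estimates a-mini-walls} in the torsion setting, where $\ch_0=0$ and the role of the slope is played by the imaginary part of the central charge. First I would record that a pure $1$-dimensional sheaf has $\ch_0(\mathcal{E})=0$, hence $\mathcal{E}\in\mathcal{T}_{B,\omega}\subseteq\Coh^{B,\omega}(X)$ and $Z_{a,B,\omega}(\mathcal{E})=-\ch_2^B(\mathcal{E})+i\,\omega c_1(\mathcal{E})$, whose phase lies in $(0,1)$ and is \emph{independent} of $a$ (note $\omega c_1(\mathcal{E})>0$ since $\omega$ is ample and $c_1(\mathcal{E})$ is a nonzero effective class). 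Next I would translate $B$-twisted Gieseker semistability into a phase inequality: a Hirzebruch--Riemann--Roch computation with $L=-B+\tfrac{K_X}{2}$ gives $\chi(\mathcal{F}\otimes L)=\ch_2^B(\mathcal{F})$ for every $1$-dimensional sheaf $\mathcal{F}$, so the defining inequality of $B$-twisted Gieseker semistability of $\mathcal{E}$ reads $\ch_2^B(\mathcal{F})/(\omega c_1(\mathcal{F}))\le \ch_2^B(\mathcal{E})/(\omega c_1(\mathcal{E}))$ for all subsheaves $0\ne\mathcal{F}\hookrightarrow\mathcal{E}$, which is exactly $\phi_{a,B,\omega}(\mathcal{F})\le\phi_{a,B,\omega}(\mathcal{E})$.

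With this in hand I would argue by contradiction: if $\mathcal{E}$ is not $\sigma_{a,B,\omega}$-semistable, let $A\hookrightarrow\mathcal{E}$ be its first Harder--Narasimhan factor, so $A$ is $\sigma_{a,B,\omega}$-semistable, $\phi_{a,B,\omega}(A)>\phi_{a,B,\omega}(\mathcal{E})$, and $\Delta_{B,\omega}(A)\ge 0$ by Remark~\ref{rem:BGineq}. Taking cohomology sheaves of $0\to A\to\mathcal{E}\to Q\to 0$ in $\Coh^{B,\omega}(X)$, the injection $\mathcal{H}^{-1}(A)\hookrightarrow\mathcal{H}^{-1}(\mathcal{E})=0$ shows $A$ is a sheaf, and there is an exact sequence of sheaves $0\to\mathcal{H}^{-1}(Q)\to A\to\mathcal{E}\to\mathcal{H}^0(Q)\to 0$ with $\mathcal{H}^{-1}(Q)\in\mathcal{F}_{B,\omega}$ and $\mathcal{H}^0(Q)$ torsion; in particular $\ch_0(A)=\ch_0(\mathcal{H}^{-1}(Q))\ge 0$. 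If $\ch_0(A)=0$ then $\mathcal{H}^{-1}(Q)=0$ (being torsion-free of rank $0$), so $A$ is a genuine subsheaf of $\mathcal{E}$; purity of $\mathcal{E}$ rules out $\dim A=0$, so $A$ is pure $1$-dimensional and the first paragraph gives $\phi_{a,B,\omega}(A)\le\phi_{a,B,\omega}(\mathcal{E})$, a contradiction valid for every $a>0$.

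It remains to treat $r:=\ch_0(A)>0$. If $\omega\ch_1^B(A)=0$, then $\Delta_{B,\omega}(A)\ge 0$ forces $\ch_2^B(A)\le 0$, hence $\Re Z_{a,B,\omega}(A)=ar-\ch_2^B(A)>0$ with vanishing imaginary part, contradicting the positivity axiom, so this does not occur for $a>0$. If $\omega\ch_1^B(A)>0$, I would first bound the imaginary part of $Z(A)$: from $\mathcal{H}^{-1}(Q)\in\mathcal{F}_{B,\omega}$ one gets $\omega\ch_1^B(\mathcal{H}^{-1}(Q))\le 0$, and since the image $A/\mathcal{H}^{-1}(Q)$ is a subsheaf of the $1$-dimensional sheaf $\mathcal{E}$ we get $0\le\omega\,c_1\!\big(A/\mathcal{H}^{-1}(Q)\big)\le\omega c_1(\mathcal{E})$; adding these yields $0<\omega\ch_1^B(A)\le\omega c_1(\mathcal{E})$. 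Now rewrite $\phi_{a,B,\omega}(A)>\phi_{a,B,\omega}(\mathcal{E})$, exactly as in Lemma~\ref{estimates a-mini-walls}, as $ar<\ch_2^B(A)-\frac{\ch_2^B(\mathcal{E})}{\omega c_1(\mathcal{E})}\,\omega\ch_1^B(A)$, substitute $\ch_2^B(A)\le (\omega\ch_1^B(A))^2/(2\omega^2 r)$ coming from $\Delta_{B,\omega}(A)\ge 0$, and use $r\ge 1$ together with $0<\omega\ch_1^B(A)\le\omega c_1(\mathcal{E})$; this collapses to either $a<0$ or $a<\frac{(\omega c_1(\mathcal{E}))^2}{2\omega^2}-\ch_2^B(\mathcal{E})$. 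Combining with the rank-zero case, no destabilizer survives once $a>\max\big\{\tfrac{(\omega c_1(\mathcal{E}))^2}{2\omega^2},\ \tfrac{(\omega c_1(\mathcal{E}))^2}{2\omega^2}-\ch_2^B(\mathcal{E})\big\}$ — the first term, being positive, absorbs the requirement $a>0$ — which is the claimed bound.

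The step I expect to be the crux is the bookkeeping in the positive-rank case: extracting the clean a priori bound $\omega\ch_1^B(A)\le\omega c_1(\mathcal{E})$ on the imaginary part of the destabilizer from the cohomology sequence and the torsion pair, and then chaining it with $\Delta_{B,\omega}(A)\ge 0$ and $r\ge 1$ without mishandling a sign. It is also worth highlighting \emph{why} $\mathbb{R}$-coefficients suffice here, whereas Lemma~\ref{estimates a-mini-walls} needed $\mathbb{Q}$-divisors and the integrality factor $n$: in the $1$-dimensional case the quantity $\omega\ch_1^B(A)$ occurs in the numerator and is bounded \emph{above} by $\omega c_1(\mathcal{E})$, so there is no small positive denominator — and hence no arithmetic gap — to control.
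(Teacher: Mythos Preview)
Your proof is correct and follows essentially the same approach as the paper's: reduce the rank-zero subobject case to the $B$-twisted Gieseker inequality (after observing that such an $A$ is an honest subsheaf), and in the positive-rank case combine the Bogomolov--Gieseker inequality $\Delta_{B,\omega}(A)\geq 0$ with the bound $0<\mu_{B,\omega}(A)\leq\omega c_1(\mathcal{E})$ to control $a$ via the resulting quadratic in $\mu_{B,\omega}(A)$. The paper argues directly with a wall-producing subobject and obtains $0<\mu_{B,\omega}(A)<\omega c_1(\mathcal{E})$ from $\omega\ch_1^B(Q)>0$, whereas you take the first HN factor and extract the same bound from the cohomology sequence; both lead to the same parabola estimate, and your extra treatment of the (vacuous) case $\omega\ch_1^B(A)=0$ does no harm.
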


\begin{proof}
Suppose that $A\hookrightarrow \mathcal{E}$ is a destabilizing subobject in $\Coh^{B,\omega}(X)$ producing a wall.  If $A$ has rank zero, then it is a subsheaf of $\mathcal{E}$, in which case the twisted Gieseker semistability of $\mathcal{E}$ prevents $A$ from being a destabilizing subobject in $\Coh^{B,\omega}(X)$.  Hence  $A$ is a sheaf of positive rank and so
$$
0<\mu_{B,\omega}(A)<\omega c_1(\mathcal{E}).
$$
The wall equation becomes
$$
\frac{\ch_2^B(A)-a\ch_0(A)}{\omega\ch_1^B(A)}=\frac{\ch_2^B(\mathcal{E})}{\omega c_1(\mathcal{E})},
$$
which gives
\begin{align*}
    a&=\frac{\ch_2^B(A)}{\ch_0(A)}-\mu_{B,\omega}(A)\frac{\ch_2^B(\mathcal{E})}{\omega c_1(\mathcal{E})}\\
    &\leq \frac{1}{2\omega^2}\left(\frac{\omega\ch_1^B(A)}{\ch_0(A)}\right)^2-\mu_{B,\omega}(A)\frac{\ch_2^B(\mathcal{E})}{\omega c_1(\mathcal{E})}\ \ \ \ \ \  (\text{because}\ \ \Delta_{B,\omega}(A)\geq 0)\\
    &=\frac{1}{2\omega^2}(\mu_{B,\omega}(A))^2-\mu_{B,\omega}(A)\frac{\ch_2^B(\mathcal{E})}{\omega c_1(\mathcal{E})}\\
    &\leq \max\left\{\frac{(\omega c_1(\mathcal{E}))^2}{2\omega^2},\ \frac{(\omega c_1(\mathcal{E}))^2}{2\omega^2}-\ch_2^B(\mathcal{E})\right\}.
\end{align*}
\end{proof}


From Remark \ref{largevolumelimit}, we know that a $\sigma_{a,B,\omega}$-semistable object for $a \gg 0$ is necessarily a $B$-twisted Gieseker semistable sheaf with respect to $\omega$. Along  with Lemmas \ref{estimates a-mini-walls} and \ref{one dim real estimates}, we now have:

\begin{cor}\label{GiesekerChamber} Let $v$ be a Chern character with $\ch_0(v) \neq 0$,  $\Delta(v)\geq 0$, and let $B,\omega\in \mathrm{NS}(X)_{\mathbb{Q}}$ be classes with $\omega$ ample and satisfying $\omega \ch_1^{B}(v)\neq 0$. Then there exists $a_0>0$ such that for all $a>a_0$, the moduli space of $\sigma_{a,B,\omega}$-semistable objects of Chern character $v$ is isomorphic to a moduli space of (twisted) Gieseker semistable sheaves. 
\end{cor}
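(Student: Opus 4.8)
The plan is to assemble the corollary from two results already in hand --- Remark~\ref{largevolumelimit}, which settles the positive-rank case, and Proposition~\ref{duality}, which settles the negative-rank case --- after a short sign reduction. Since $\ch_0(v)\neq 0$ and $\omega\ch_1^B(v)\neq 0$, there are four sign patterns for the pair $(\ch_0(v),\ \omega\ch_1^B(v))$, and I would first reduce the two patterns with $\omega\ch_1^B(v)<0$ to the two with $\omega\ch_1^B(v)>0$ by means of the shift $[1]$. The point is that $[1]$ is an autoequivalence of $D^b(X)$ that preserves $\sigma_{a,B,\omega}$-semistability and S-equivalence classes --- shifting an object changes neither, and the shift does not involve the parameter $a$ --- while sending an object of Chern character $v$ to one of Chern character $-v$; it therefore induces an isomorphism of good moduli spaces $\mathcal{M}_{a,B,\omega}(v)\cong\mathcal{M}_{a,B,\omega}(-v)$. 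Since $\ch_0(-v)=-\ch_0(v)$, $\omega\ch_1^B(-v)=-\omega\ch_1^B(v)$ and $\Delta(-v)=\Delta(v)$, after possibly replacing $v$ by $-v$ we may assume $\omega\ch_1^B(v)>0$.

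With $\omega\ch_1^B(v)>0$ there are two cases. If $\ch_0(v)>0$, then $v$ satisfies the hypotheses of Remark~\ref{largevolumelimit} (namely $\ch_0(v)>0$, $\omega\ch_1^B(v)>0$, $\Delta(v)\geq 0$, and $B,\omega\in\mathrm{NS}(X)_{\mathbb{Q}}$), so that remark --- which combines the large volume limit of Bridgeland and Lo-Qin with the explicit bound of Lemma~\ref{estimates a-mini-walls} --- produces $a_0>0$ such that $\mathcal{M}_{a,B,\omega}(v)\cong M_{B,\omega}(v)$ for every $a>a_0$, a moduli space of (twisted) Gieseker semistable sheaves. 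The one-dimensional estimate of Lemma~\ref{one dim real estimates} does not enter here because $\ch_0(v)\neq 0$; it is the ingredient that would be needed in the rank-zero case. If instead $\ch_0(v)<0$, then $v$ satisfies the hypotheses of Proposition~\ref{duality} ($\ch_0(v)<0$, $\omega\ch_1^B(v)>0$, $\Delta(v)\geq 0$), which yields $a_0>0$ such that $\mathcal{M}_{a,B,\omega}(v)\cong M_{-B,\omega}(v^D)$ for every $a>a_0$; since $v^D$ has rank $-\ch_0(v)>0$, this is again a moduli space of (twisted) Gieseker semistable sheaves. Tracing the reduction back through the four sign patterns, $\mathcal{M}_{a,B,\omega}(v)$ is isomorphic to $M_{B,\omega}(v)$, $M_{-B,\omega}(v^D)$, $M_{-B,\omega}((-v)^D)$, or $M_{B,\omega}(-v)$ respectively, with $a_0$ the bound from the applicable case; this is exactly the statement of the corollary.

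The corollary is thus essentially a bookkeeping consequence of what precedes it, and I do not anticipate a genuine obstruction: the substance resides in the large volume limit theorems of Bridgeland and Lo-Qin and in the wall estimates of Lemmas~\ref{estimates a-mini-walls} and~\ref{one dim real estimates}. The one place that I would flag as needing care is that the moduli problem of $\sigma_{a,B,\omega}$-semistable objects genuinely contains negative-rank complexes --- shifts of torsion-free sheaves --- so when $\ch_0(v)<0$ one cannot recognise $\mathcal{M}_{a,B,\omega}(v)$ as an honest sheaf moduli space by the naive large-volume argument; this really requires the derived-dual isomorphism of Proposition~\ref{duality}, which itself rests on the derived dual functor restricting to an anti-equivalence between the relevant hearts (cf.~\cite{martinez2017duality}). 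Relatedly, when $\omega\ch_1^B(v)<0$ the phrase ``moduli space of $\sigma_{a,B,\omega}$-semistable objects of Chern character $v$'' is to be read as the moduli of shifts $E'[1]$ with $E'\in\Coh^{B,\omega}(X)$, which is precisely what the reduction by $[1]$ in the first step makes rigorous.
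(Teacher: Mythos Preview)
Your proposal is correct and follows essentially the same approach as the paper's proof: a sign reduction via shift followed by invoking Remark~\ref{largevolumelimit} (through Lemma~\ref{estimates a-mini-walls}) in the positive-rank case and Proposition~\ref{duality} in the negative-rank case. The only cosmetic difference is that the paper organizes the four sign patterns as ``signs of $\ch_0(v)$ and $\omega\ch_1^B(v)$ agree'' versus ``disagree'' and then shifts, whereas you first normalize to $\omega\ch_1^B(v)>0$; the paper also records the explicit value $a_0=\pm\tfrac{n}{2}\mu_{B,\omega}(v)\Delta_{B,\omega}(v)$ in each case, which your existence argument does not need.
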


\begin{proof}
If the signs of $\ch_0(v)$ and $\omega\ch_1^B(v)$ are equal then, up to a shift, we can assume that $\ch_0(v)>0$ and by Lemma \ref{estimates a-mini-walls} we know that the moduli space $\mathcal{M}_{a,B,\omega}(v)$ is isomorphic to $M_{B,\omega}(v)$ for all $a>a_0$, where 
$$
a_0=\frac{n}{2}\mu_{B,\omega}(v)\Delta_{B,\omega}(v).
$$
If the signs of $\ch_0(v)$ and $\omega\ch_1^B(v)$ are different then, up to a shift, we can assume that $\ch_0(v)<0$ and by Proposition \ref{duality} we know that the moduli space $\mathcal{M}_{a,B,\omega}(v)$ is isomorphic to $M_{-B,\omega}(v^D)$ for all $a>a_0$,  where
$$
a_0=-\frac{n}{2}\mu_{B,\omega}(v)\Delta_{B,\omega}(v).
$$
\end{proof}

Even though the bounds obtained in Lemma \ref{estimates a-mini-walls} and \ref{one dim real estimates} are easy to write, they can be improved a lot in special cases. For instance, in the case of Weierstra\ss\ elliptic surfaces and polarizations of the form $\omega=\Theta+bf$, if $f\ch_1^B(v)=0$ we can obtain a bound that is independent of $\omega$. The precise result is the following.

\begin{lem}\label{estimates_0fiberdegree}
Let $X$ be a Weierstra\ss\ elliptic surface and let $v$ be a Chern character with $\ch_0(v)> 0$,  $f\ch_1(v)\geq 0$ and $\Delta(v)\geq 0$. Let $\lambda \in \mathbb{Q}, b \in \mathbb{Q}_{>0}$ be such that $$\omega\ch_1^B(v)>0$$ holds if we put $\omega=\Theta+bf$ and $B=\mu_f(v)\Theta+\lambda f$ where $\mu_f(v)=f\ch_1(v)/\ch_0(v)$. Then there exists $n\in\mathbb{N}$ such that for
$$
a>\max\left\{\frac{n(\Theta\ch_1^B(v))^3}{2\ch_0(v)\omega^2}, \frac{n(\Theta\ch_1^B(v))^3}{2\ch_0(v)\omega^2}-n\ch_2^B(v)\Theta\ch_1^B(v)\right\},
$$
the only $\sigma_{a,B,\omega}$-semistable objects of Chern character $v$ are precisely the $B$-twisted Gieseker semistable sheaves with respect to $\omega$. In particular, for $\omega$ integral this bound can be chosen independently of $\omega$.
\end{lem}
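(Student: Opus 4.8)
The plan is to mirror the proof of Lemma~\ref{estimates a-mini-walls}, exploiting the special shape of $B$ to move the factor $\omega^2$ from the numerator of the wall bound into its denominator. Since $\Theta\cdot f=1$ and $f^2=0$, the divisor $B=\mu_f(v)\Theta+\lambda f$ satisfies $B\cdot f=\mu_f(v)$, so
\[
  f\ch_1^B(v)=f\ch_1(v)-\ch_0(v)(B\cdot f)=f\ch_1(v)-\ch_0(v)\mu_f(v)=0 .
\]
Hence $\omega\ch_1^B(v)=\Theta\ch_1^B(v)+b\,f\ch_1^B(v)=\Theta\ch_1^B(v)=:\delta$, which is independent of $b$ and is $>0$ by hypothesis. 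Since also $\ch_0(v)>0$ and $\Delta(v)\ge 0$, Remark~\ref{largevolumelimit} shows that for $a\gg 0$ the only $\sigma_{a,B,\omega}$-semistable objects of class $v$ are the $B$-twisted $\omega$-Gieseker semistable sheaves; it therefore suffices to exhibit an explicit $a_0$, of the form claimed, above which no numerical wall for $v$ is crossed.

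\textbf{Reduction to a subsheaf.} Let $0\to A\to E\to Q\to 0$ in $\Coh^{B,\omega}(X)$ be a destabilizing sequence at such a wall, with $E$ a $B$-twisted $\omega$-Gieseker semistable sheaf of class $v$. Exactly as in Lemma~\ref{estimates a-mini-walls} — the long exact cohomology sequence (using that $E$ is torsion-free), Lemma~\ref{equalslope} to discard the equal-slope case, and passing to a Harder--Narasimhan factor if necessary — we may assume $A$ is a subsheaf of $E$ with
\[
  \ch_0(A)\ge 1,\qquad 0<\omega\ch_1^B(A)<\omega\ch_1^B(E)=\delta,\qquad \mu_{B,\omega}(A)<\mu_{B,\omega}(E)=\tfrac{\delta}{\ch_0(v)},
\]
and $\Delta_{B,\omega}(A)\ge 0$ by Remark~\ref{rem:BGineq}. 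Finally choose $n\in\mathbb N$ so that $n(\omega\cdot D)\in\mathbb Z$ for every $D\in\mathrm{NS}(X)$ — the denominator of $b$ works since $\omega=\Theta+bf$, and $n=1$ works when $b\in\mathbb Z$.

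\textbf{The estimate.} Writing out $\phi_{a,B,\omega}(A)=\phi_{a,B,\omega}(E)$ and solving for $a$ gives
\[
  a=\frac{\ch_2^B(A)\,\delta-\ch_2^B(E)\,\omega\ch_1^B(A)}{\ch_0(A)\,\delta-\ch_0(E)\,\omega\ch_1^B(A)},
\]
whose denominator equals $\omega\cdot\bigl(\ch_0(A)\ch_1(E)-\ch_0(E)\ch_1(A)\bigr)$, a strictly positive element of $\tfrac1n\mathbb Z$, hence $\ge\tfrac1n$. If the numerator is $\le 0$ then $a\le 0$; otherwise $a\le n\bigl(\ch_2^B(A)\,\delta-\ch_2^B(E)\,\omega\ch_1^B(A)\bigr)$. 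From $\Delta_{B,\omega}(A)\ge 0$ and $\mu_{B,\omega}(A)<\delta/\ch_0(v)$ we get $\ch_2^B(A)\le \tfrac{\mu_{B,\omega}(A)\,\omega\ch_1^B(A)}{2\omega^2}<\tfrac{\delta\,\omega\ch_1^B(A)}{2\omega^2\ch_0(v)}$, so
\[
  a< n\,\omega\ch_1^B(A)\Bigl(\tfrac{\delta^2}{2\omega^2\ch_0(v)}-\ch_2^B(E)\Bigr).
\]
The parenthesized factor equals $\Delta_{B,\omega}(E)/(2\omega^2\ch_0(v))$, which is $\ge 0$ since $E$ is $\mu_\omega$-semistable; using $\omega\ch_1^B(A)<\delta$ we conclude
\[
  a< \frac{n\delta^3}{2\omega^2\ch_0(v)}-n\delta\,\ch_2^B(E)
  \ \le\ \max\Bigl\{\tfrac{n\delta^3}{2\omega^2\ch_0(v)},\ \tfrac{n\delta^3}{2\omega^2\ch_0(v)}-n\delta\,\ch_2^B(v)\Bigr\},
\]
and the degenerate bound $a\le 0$ is also dominated by this maximum. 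Hence no wall for $v$ survives once $a$ exceeds the stated maximum, which together with Remark~\ref{largevolumelimit} proves the claim. When $\omega$ is integral, take $n=1$; since $\delta=\Theta\ch_1^B(v)$ and $\ch_2^B(v)$ do not involve $b$ while $\omega^2\ge 1$, the bound is dominated by $\max\{\delta^3/(2\ch_0(v)),\ \delta^3/(2\ch_0(v))-\delta\,\ch_2^B(v)\}$, independent of $\omega$.

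\textbf{Expected main obstacle.} The analysis is mostly bookkeeping given Lemma~\ref{estimates a-mini-walls}; the substantive points are (i) observing that $B=\mu_f(v)\Theta+\lambda f$ forces $f\ch_1^B(v)=0$, so the degree window $(0,\delta)$ available to a destabilizing subsheaf is independent of $b$ — this is exactly what sends $\omega^2$ into the denominator — and (ii) tracking signs so that the chain of inequalities remains valid whatever the sign of $\ch_2^B(v)$ is (hence the maximum of two terms), where the inequality $\Delta_{B,\omega}(E)\ge 0$ for the $\mu_\omega$-semistable sheaf $E$ is precisely what keeps the relevant factor nonnegative.
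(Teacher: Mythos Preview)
Your proof is correct and follows essentially the same route as the paper's: reduce to a destabilizing subsheaf $A$ as in Lemma~\ref{estimates a-mini-walls}, use $f\ch_1^B(v)=0$ to get the $b$-independent window $0<\omega\ch_1^B(A)<\Theta\ch_1^B(v)$, bound the slope gap from below via integrality, and apply $\Delta_{B,\omega}(A)\ge 0$. The only cosmetic differences are your choice of $n$ (you require $n(\omega\cdot D)\in\mathbb Z$ rather than the paper's $n(\omega\ch_1(C))\omega^2\in\mathbb Z$) and your use of $\Delta_{B,\omega}(E)\ge 0$ to keep the final factor nonnegative, whereas the paper instead handles that step by splitting on the sign of $\ch_2^B(v)$.
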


For a Chern character $v=(v_0, v_1, v_2)$ with $v_0 \neq 0$ and $B$ of the form $p\Theta + qf$ for some $p, q \in \mathbb{R}$, note that $f\ch_1^B(v)=0$ if and only if $fB=\mu_f(v)$.  The choice of $B$ in Lemma \ref{estimates_0fiberdegree} therefore  ensures that we have $f\ch_1^B(v)=0$.

\begin{proof}
Suppose $E$ is a $B$-twisted Gieseker semistable sheaf with respect to $\omega$.  As in the proof of Lemma \ref{estimates a-mini-walls},  if $A\hookrightarrow E$ is a $\sigma_{a,B,\omega}$-destabilizing object of a sheaf $E$ with $\ch(E)=v$, then $A\in\Coh(X)$ and 
\[
0<\omega\ch_1^B(A)<\omega\ch_1^B(E)=\Theta\ch_1^B(v),
\]
where the last equality holds because $f\ch_1^B(v)=0$ given our assumption on $B$; moreover, as in the proof of Lemma \ref{estimates a-mini-walls}, we can assume
\[
 \mu_{B,\omega}(A)<\mu_{B,\omega}(E).
\]
The wall equation can be written as
$$
a(\mu_{B,\omega}(A)-\mu_{B,\omega}(E))=\frac{\ch_2^B(v)}{\ch_0(v)}\mu_{B,\omega}(A)-\frac{\ch_2^B(A)}{\ch_0(A)}\mu_{B,\omega}(v).
$$
Since $\omega$ is a rational class,  there exists $n\in\mathbb{N}$ such that
$$
n(\omega\ch_1(C)) {\omega^2}\in\mathbb{Z},\ \ \text{for all}\ \ C\in D^b(X).
$$
In particular, if $\omega$ is integral we can just take $n=1$. Thus
$$
na(\mu_{B,\omega}(A)-\mu_{B,\omega}(E))\leq \frac{-a}{\ch_0(A)\ch_0(v)}.
$$
Therefore
$$
\frac{a}{\ch_0(A)\ch_0(v)}\leq n\left[\frac{\ch_2^B(A)}{\ch_0(A)}\mu_{B,\omega}(v)-\frac{\ch_2^B(v)}{\ch_0(v)}\mu_{B,\omega}(A)\right].
$$
Using that at the wall $A$ is also Bridgeland semistable we know that $\Delta_{B,\omega}(A)\geq 0$ and we get
\begin{align*}
a&\leq \frac{n(\omega\ch_1^B(A))^2}{2\ch_0(A)\omega^2}\Theta\ch_1^B(v)-n\ch_2^B(v)\omega\ch_1^B(A)\\
&< \frac{n(\Theta\ch_1^B(v))^3}{2\ch_0(v)\omega^2}-n\ch_2^B(v)\omega\ch_1^B(A)\\
&\leq \max\left\{\frac{n(\Theta\ch_1^B(v))^3}{2\ch_0(v)\omega^2},\frac{n(\Theta\ch_1^B(v))^3}{2\ch_0(v)\omega^2}-n\ch_2^B(v)\Theta\ch_1^B(v)\right\}
\end{align*}
where the last inequality follows by considering the sign of $\ch_2^B(v)$.  
Therefore, for $a$ greater than the given bound, no $B$-twisted Gieseker semistable sheaf with respect to $\omega$ can be destabilized.  Since we know from Remark \ref{largevolumelimit}  that the moduli space of $\sigma_{a, B, \omega}$-semistable objects coincides with the moduli space of $B$-twisted Gieseker semistable sheaves with respect to $\omega$ for $a \gg 0$, it follows that the two moduli spaces coincide when $a$ is greater than the bound above.
\end{proof}

\section{Preserving twisted Gieseker stability}\label{preservation:gieseker}
From now on, $X$ will denote a Weierstra\ss\  elliptic surface with canonical section $\Theta$ and fiber $f$. The aim of this section is to use Proposition \ref{isoUpsilon} to relate different Bridgeland moduli spaces over $X$  via the relative Fourier-Mukai transform $\Phi$. For this purpose we will analyze the image of the submanifold $\Stab^{\Gamma}(X)\subset \Stab(X)$ under the action of $\Phi$, where $\Gamma=\mathbb{Z}\oplus\mathbb{Z}\oplus\mathbb{Z}\oplus\frac{1}{2}\mathbb{Z}$ and $v$ assigns every object in $E\in D^b(X)$ its Chern character table, defined by
$$
v(E)=\begin{array}{|c|c|}\hline \ch_0(E) & f\ch_1(E)\\ \hline \Theta\ch_1(E)& \ch_2(E)\\ \hline \end{array}.
$$
More precisely, set 
\[
\bar{B}=\bar{p}\Theta+\bar{q}f, \text{\quad} \bar{\omega}=\Theta+\bar{b}f,
\]
and $\bar{a}$ such that $Z_{\bar{a},\bar{B},\bar{\omega}}$ is the central charge of a stability condition. We want to find 
\[
B=p\Theta+qf, \text{\quad} \omega=\Theta+bf,
\]
and $a$ such that
\begin{equation}\label{eq:mainZaBwT}
Z_{a,B,\omega}(\Phi(\_))=T Z_{\bar{a},\bar{B},\bar{\omega}}(\_)
\end{equation}
for some $T\in \mathrm{GL}^+(2,\mathbb{R})$. Solving this central charge equation together with Lemma \ref{lem:MSLem6-20} will give the  equation of Bridgeland stability conditions
\begin{equation}\label{eq:stabeq-2}
\Phi \cdot \sigma_{\bar{a}, \bar{B}, \bar{\omega}} = \sigma_{a, B, \omega}\cdot g 
\end{equation}
for some $g \in \widetilde{\mathrm{GL}}^+\!(2,\mathbb{R})$.  That is, we will obtain a  class of examples of Proposition \ref{isoUpsilon}, different from those already obtained in  \cite[Theorem 11.7]{Lo20} which deals with Bridgeland stability conditions of the form $\sigma_{B,\omega}$ where $B$ is a multiple of the fiber class.



We can now solve for the unbarred parameters $a, b, p, q$ in terms of the barred parameters $\bar{a}, \bar{b}, \bar{p}, \bar{q}$ in  equation \eqref{eq:mainZaBwT}  (see Appendix \ref{computations}).  Assuming $\bar{a}>0$ and $\bar{b} \geq e$, we obtain  $b>e$ and hence $2b-e>0$, and

\begin{align}\label{usual:coordinates}
T&=\begin{pmatrix}-p & 1+p\bar{p}\\ -1 & \bar{p}\end{pmatrix}\notag\\
b&=(\bar{a}+e)+\bar{p}^2\left(\bar{b}-\frac{e}{2}\right)\notag\\
p&=\frac{-\bar{p}(2\bar{b}-e)}{2b-e}\\
q&=\bar{q}+\frac{e}{2}(p-\bar{p}-1)\notag\\
a&=(\bar{b}-e)-p^2\left(b-\frac{e}{2}\right).\notag
\end{align}

In what follows, it will be convenient  to rewrite equations \eqref{usual:coordinates} using the  coordinates 
\[
\bar{V}:= \bar{\omega}^2=2\bar{b}-e, \text{\quad} V:=\omega^2=2b-e,
\]
which we call \emph{volume coordinates}, and the  coordinates
\[
\bar{U}=2\bar{a}+e, \text{\quad} U=2a+e,
\]
which we call \emph{volume-like\footnote{We refer to this coordinates as volume-like coordinates because in the standard form \cite{ABL} for geometric stability conditions we have $2a=\omega^2$.} coordinates.}  
Note that $V=2b-e>0$ under our assumptions on $\bar{a}, \bar{b}$.  We can now easily rewrite  \eqref{usual:coordinates} as
\begin{align}
    V &= \bar{U}+\bar{p}^2\bar{V} \label{eq:V}\\
    p &= -\bar{p}\frac{\bar{V}}{V} \label{eq:p}\\
    q&=\bar{q}+\frac{e}{2}(p-\bar{p}-1)\label{eq:q}\\
    U &= \bar{V}-p^2V \label{eq:Ut1}.
\end{align}


In fact, from \eqref{eq:Ut1}, \eqref{eq:V}, \eqref{eq:p}, we have
\begin{align*}
  UV &= \bar{V}V-p^2V^2, \\
  \bar{U}\bar{V} &= (V-\bar{p}^2\bar{V})\bar{V} = V\bar{V} - \bar{p}^2 \bar{V}^2, \\
  p^2V^2 &= \bar{p}^2\bar{V}^2,
\end{align*}
respectively. Putting these three equations together, we obtain
\begin{equation}\label{eq:U}
UV = \bar{U}\bar{V}.
\end{equation}
If we further assume $\bar{V}>0$, then a set of solutions for the equation
$$
T' Z_{a,B,\omega}(\_)= Z_{\bar{a},\bar{B},\bar{\omega}}(\whPhi(\_)[1])
$$
is given by 
\begin{align}
T'&=T^{-1}=\begin{pmatrix}\bar{p} & -1-p\bar{p}\\ 1 & -{p}\end{pmatrix}\notag\\
    \bar{V}&={U}+{p}^2{V}\label{eq:barV}\\
    \bar{p}&=-{p}\frac{{V}}{\bar{V}}\label{eq:barp}\\
    \bar{q}&={q}+\frac{e}{2}(\bar{p}-{p}+1)\label{eq:barq}\\
    \bar{U}&=V-\bar{p}^2\bar{V}\label{eq:barU}
\end{align}
which further exemplifies the symmetry of our solutions.

\begin{prop}\label{prop:anaughtbound}
Given $\bar{\omega}=\Theta+\bar{b}f$ ample ($\bar{b}>e$), and $\bar{B}=\bar{p}\Theta+\bar{q}f$, there exists $\bar{a}_0>0$ such that, for all $\bar{a}>\bar{a}_0$, the stability condition $\Phi\cdot \sigma_{\bar{a},\bar{B},\bar{\omega}}$ is  geometric.
\end{prop}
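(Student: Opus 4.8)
The plan is to verify the definition of geometricity directly. By definition, $\Phi\cdot\sigma_{\bar a,\bar B,\bar\omega}$ is geometric precisely when every skyscraper sheaf $\OO_x$ is stable of a common phase with respect to it, and since $\Phi$ is an autoequivalence this holds if and only if every object $\Phi^{-1}(\OO_x)=\whPhi(\OO_x)[1]$ --- equivalently, every sheaf $\whPhi(\OO_x)$ --- is $\sigma_{\bar a,\bar B,\bar\omega}$-stable of a common phase. So the parameter computation \eqref{usual:coordinates} plays no role here: the whole argument takes place on the ``barred'' side, and all that is needed is a bound $\bar a_0>0$, uniform in $x$, past which each $\whPhi(\OO_x)$ is $\sigma_{\bar a,\bar B,\bar\omega}$-stable.

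First I would pin down these objects. By the description of $\whPhi$ in Section~2, $\whPhi(\OO_x)$ is a rank-one torsion-free sheaf on the (integral) fiber of $\pi$ through $x$, and a short computation with the Chern-character formulas of Section~2 gives $\ch(\whPhi(\OO_x))=(0,f,0)$, whence $Z_{\bar a,\bar B,\bar\omega}(\whPhi(\OO_x))=\bar p+i$, independently of $x$ and of $\bar a$. As $\whPhi(\OO_x)$ is a torsion sheaf it lies in $\Coh^{\bar B,\bar\omega}(X)$, so the ``common phase'' condition is automatic and only stability must be proved. Next I would observe that \emph{every} pure one-dimensional sheaf of Chern character $(0,f,0)$ is $\bar B$-twisted $\bar\omega$-Gieseker stable: its support is a single integral fiber, so a proper subsheaf either has first Chern class $f$, in which case the quotient is a nonzero zero-dimensional sheaf and the reduced Hilbert polynomial drops strictly, or is zero-dimensional, which is impossible by purity. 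In particular $\whPhi(\OO_x)$ is $\bar B$-twisted $\bar\omega$-Gieseker stable and there is no properly $\bar B$-twisted $\bar\omega$-Gieseker semistable sheaf of this Chern character. Lemma~\ref{one dim real estimates} then makes $\whPhi(\OO_x)$ $\sigma_{\bar a,\bar B,\bar\omega}$-semistable whenever $\bar a>\max\bigl\{\tfrac{1}{2\bar\omega^{2}},\,\tfrac{1}{2\bar\omega^{2}}+\bar p\bigr\}$ (using $\bar\omega\cdot f=1$ and $\ch_2^{\bar B}(\whPhi(\OO_x))=-\bar p$); crucially this bound depends only on $\bar\omega$ and $\bar B$, which is exactly why a single $\bar a_0$ suffices.

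The remaining --- and, I expect, only delicate --- step is to upgrade semistability to stability, i.e.\ to exclude a proper subobject $A\hookrightarrow\whPhi(\OO_x)$ in $\Coh^{\bar B,\bar\omega}(X)$ of equal phase once $\bar a$ is large. The cohomology long exact sequence of $0\to A\to\whPhi(\OO_x)\to Q\to 0$ forces $\Hc^{-1}(A)=0$, so $A$ is a sheaf fitting in $0\to\Hc^{-1}(Q)\to A\to A'\to 0$ with $\Hc^{-1}(Q)\in\Fc_{\bar B,\bar\omega}$ and $A'\subseteq\whPhi(\OO_x)$ a subsheaf. If $\ch_0(A)=0$, then $\Hc^{-1}(Q)=0$ and $A$ is a genuine subsheaf; it cannot be zero-dimensional (phase $1$, impossible inside a semistable object of phase $<1$), and if it is a proper finite-colength subsheaf then $\Re Z_{\bar a,\bar B,\bar\omega}(A)>\Re Z_{\bar a,\bar B,\bar\omega}(\whPhi(\OO_x))$ while the imaginary parts agree, so $A$ has strictly smaller phase --- this case is precisely the strict Gieseker stability established above. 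If $\ch_0(A)>0$, then $\Hc^{-1}(Q)$ has the same positive rank and satisfies $\mu_{\bar B,\bar\omega}(\Hc^{-1}(Q))\le 0$; combining the equal-phase relation $Z_{\bar a,\bar B,\bar\omega}(A)=t(\bar p+i)$ with $0<t\le 1$, the Bogomolov--Gieseker inequality $\Delta_{\bar B,\bar\omega}(A)\ge 0$ from Remark~\ref{rem:BGineq} (valid since $A$ is then $\sigma_{\bar a,\bar B,\bar\omega}$-semistable), and the constraints $\ch_2(A')\le 0$ and $\bar\omega\cdot c_1(A')=1$, one derives a contradiction; conceptually this is the assertion that no Bridgeland wall for the class $(0,f,0)$ meets the ray $\{\sigma_{\bar a,\bar B,\bar\omega}\}_{\bar a\gg 0}$ --- the one-dimensional analogue of Remark~\ref{genericity} --- so that semistability and stability coincide there. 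Enlarging $\bar a_0$ if necessary to absorb this last analysis, every $\whPhi(\OO_x)[1]$ is $\sigma_{\bar a,\bar B,\bar\omega}$-stable of the common phase computed above, hence $\Phi\cdot\sigma_{\bar a,\bar B,\bar\omega}$ is geometric. The positive-rank subcase of the stability upgrade is the step I expect to require the most care.
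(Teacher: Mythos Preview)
Your approach is exactly the paper's: reduce geometricity of $\Phi\cdot\sigma_{\bar a,\bar B,\bar\omega}$ to $\sigma_{\bar a,\bar B,\bar\omega}$-stability of the sheaves $\whPhi(\OO_x)$, recognize these as $1$-dimensional Gieseker stable sheaves of Chern character $(0,f,0)$, and invoke Lemma~\ref{one dim real estimates} with the identical bound $\bar a_0=\max\{\tfrac{1}{2(2\bar b-e)},\,\tfrac{1}{2(2\bar b-e)}+\bar p\}$. The paper stops there, leaving the passage from semistability to stability implicit; your explicit treatment of that step is correct but in fact no enlargement of $\bar a_0$ is needed, since the proof of Lemma~\ref{one dim real estimates} already excludes positive-rank subobjects of equal phase past the same bound, while rank-zero ones are ruled out by the strict Gieseker stability of $\whPhi(\OO_x)$.
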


\begin{proof}
In order to conclude $\Phi \cdot \sigma_{\bar{a}, \bar{B}, \bar{\omega}}$ is a geometric stability condition, we need to show that all the skyscraper sheaves $\OO_x$ are $\Phi  \cdot \sigma_{\bar{a}, \bar{B}, \bar{\omega}}$-stable of the same phase.  In particular, $\OO_x$ being $\Phi  \cdot \sigma_{\bar{a}, \bar{B}, \bar{\omega}}$-stable is equivalent to $\Phi^{-1}(\OO_x)$ being $\sigma_{\bar{a}, \bar{B}, \bar{\omega}}$-stable.  Since $\Phi^{-1} \cong \whPhi [1]$, the object $\Phi^{-1}(\OO_x)$ is merely a shift of a  rank-one, torsion-free sheaf on a fiber of the elliptic fibration.  Therefore, it suffices  to find $\bar{a}_0$ such that every 1-dimensional Gieseker stable sheaf $\mathcal{E}$ on $X$ with $\ch(\mathcal{E})=(0,f,0)$ is $\sigma_{\bar{a},\bar{B},\bar{\omega}}$-stable for all $\bar{a}>\bar{a}_0$. By Lemma \ref{one dim real estimates}, it is enough to take
\[
\bar{a}_0=\max\left\{ \frac{1}{2(2\bar{b}-e)},\ \frac{1}{2(2\bar{b}-e)}+\bar{p}\right\}.
\]
\end{proof}

\begin{cor}\label{cor:stabeq-1}
Let $\bar{\omega}, \bar{B}, \bar{a}$ be as in Proposition \ref{prop:anaughtbound}.  Then for any $\bar{a}>\bar{a}_0$, there exist $\mathbb{R}$-divisors $\omega, B$ with $\omega$ ample, and $g \in \widetilde{\mathrm{GL}}^+\! (2,\mathbb{R})$ such that the equation
\begin{equation}\label{eq:stabeq30}
  \Phi  \cdot \sigma_{\bar{a}, \bar{B}, \bar{\omega}} = \sigma_{a, B, \omega } \cdot g
\end{equation}
holds.
\end{cor}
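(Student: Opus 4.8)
The plan is to derive this as a direct application of Proposition \ref{isoUpsilon} to the autoequivalence $\Upsilon = \Phi$ and the stability condition $\sigma = \sigma_{\bar a, \bar B, \bar \omega}$; the two substantive inputs — the explicit solution of the central-charge equation and the geometricity of $\Phi \cdot \sigma_{\bar a, \bar B, \bar\omega}$ — have already been prepared.

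First I would check that the hypotheses of Proposition \ref{isoUpsilon} are met. Write $\bar B = \bar p \Theta + \bar q f$ and $\bar\omega = \Theta + \bar b f$; ampleness of $\bar\omega$ gives $\bar b > e$, so the computation of Appendix \ref{computations} produces the explicit solution \eqref{usual:coordinates} of the central-charge equation $Z_{a,B,\omega}(\Phi(\_)) = T\, Z_{\bar a, \bar B, \bar\omega}(\_)$, i.e.\ exactly equation \eqref{eq:mainZaBwT}. The matrix $T$ of \eqref{usual:coordinates} satisfies $\det T = 1 > 0$, so $T \in \mathrm{GL}^+(2,\mathbb{R})$; and since $\bar a > 0$ and $\bar b \geq e$, the formula for $b$ gives $b > e$, so that $\omega = \Theta + b f$ is an ample $\mathbb{R}$-divisor. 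Thus, taking $(Z, \mathcal A) = \sigma_{\bar a, \bar B, \bar\omega}$ (a stability condition by Lemma \ref{lem:ZaBom}), we have the relation $Z_{a,B,\omega}(\Phi(\_)) = T\, Z(\_)$ demanded by Proposition \ref{isoUpsilon}. The remaining hypothesis — that $\Phi \cdot \sigma_{\bar a, \bar B, \bar\omega}$ be a geometric stability condition — is precisely Proposition \ref{prop:anaughtbound} for $\bar a > \bar a_0$. Proposition \ref{isoUpsilon} then yields $g \in \widetilde{\mathrm{GL}}^+(2,\mathbb{R})$ with $\Phi \cdot \sigma_{\bar a, \bar B, \bar\omega} = \sigma_{a,B,\omega}\cdot g$, which is the assertion.

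The one point that is not pure bookkeeping, and which I would regard as the (mild) main obstacle, is the positivity condition $a > 0$ in the hypothesis of Proposition \ref{isoUpsilon}. I see two ways to secure it. One is to read it off the identity \eqref{eq:U}, $UV = \bar U \bar V$, where $U = 2a+e$ and $V = 2b - e > 0$: the right-hand side is positive and grows with $\bar a$, so $a > 0$ once $\bar a$ is large, after enlarging $\bar a_0$ if necessary. The cleaner way, which does not require enlarging $\bar a_0$, is to note that $\Phi \cdot \sigma_{\bar a, \bar B, \bar\omega}$ is already a genuine geometric stability condition: choosing the lift of $T$ so that the skyscrapers $\OO_x$ acquire phase $1$ and applying Lemma \ref{lem:MSLem6-20}, its heart is identified with $\Coh^{B,\omega}(X)$, so $(Z_{a,B,\omega}, \Coh^{B,\omega}(X))$ is a bona fide stability condition — and this forces $a > 0$, since for $a \leq 0$ the pair fails the positivity axiom already on a shift of $\OO_X$ when $B$ is numerically trivial. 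Beyond this everything is routine, the substantive work having been done in Proposition \ref{prop:anaughtbound} and Appendix \ref{computations}.
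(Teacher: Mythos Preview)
Your approach is essentially correct, and the ``cleaner way'' you sketch in the final paragraph coincides with the paper's own proof: rather than invoking Proposition \ref{isoUpsilon} as a black box (which would require its hypothesis $a>0$), the paper argues directly by choosing a lift $g$ of $T$ so that each $\OO_x$ acquires phase in $(0,1]$ for $\Phi\cdot\sigma_{\bar a,\bar B,\bar\omega}\cdot g^{-1}$, and then applies Lemma \ref{lem:MSLem6-20} to identify the heart with $\Coh^{B,\omega}(X)$.

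One correction is in order. Your final clause---that positivity of $Z_{a,B,\omega}$ fails on a shift of $\OO_X$ ``when $B$ is numerically trivial''---does not apply here, since $B=p\Theta+qf$ is generally nontrivial. In fact the paper never verifies $a>0$, nor does it need to: the pair $(Z_{a,B,\omega},\Coh^{B,\omega}(X))$ is automatically a stability condition because it has been obtained from one via the group actions, and indeed in Section 7 the paper exhibits geometric stability conditions of exactly this form with $a<0$ (see Corollary \ref{TXafterFM} and Lemma \ref{CCwithnegativea}). So your concern about $a>0$, while natural in light of how Proposition \ref{isoUpsilon} is stated, becomes moot once you take the direct route---which you did.
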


\begin{proof}
Suppose $\bar{a} > \bar{a}_0$.  From the proof of Proposition \ref{prop:anaughtbound}, we know that for every skyscraper sheaf $\OO_x$, the object $\Phi^{-1}(\OO_x) \cong \whPhi \OO_x [1]$ is $\sigma_{\bar{a}, \bar{B}, \bar{\omega}}$-stable with phase in $(1,2]$.  Now choose a lift $g \in \widetilde{\mathrm{GL}}^+\!(2,\mathbb{R})$ of the matrix $T$ in \eqref{usual:coordinates} so that $\OO_x$ is $\Phi \cdot \sigma_{\bar{a}, \bar{B}, \bar{\omega}} \cdot g^{-1}$-stable of phase in $(0,1]$.  Since we know the central charge of $\Phi \cdot \sigma_{\bar{a}, \bar{B}, \bar{\omega}} \cdot g^{-1}$ is precisely 
\[
  T Z_{\bar{a}, \bar{B}, \bar{\omega}} (\Phi^{-1}(\_)) 
\]
which equals $Z_{a, B, \omega}$ from \eqref{eq:mainZaBwT}, using Lemma \ref{lem:MSLem6-20} we can conclude 
\[
\Phi \cdot \sigma_{\bar{a}, \bar{B}, \bar{\omega}} \cdot g^{-1} = \sigma_{a, B, \omega }
\]
which gives \eqref{eq:stabeq30}.
\end{proof}

\begin{rem}\label{U-estimate-geom}
Notice that in Proposition \ref{prop:anaughtbound}, the assumption $\bar{b}>e$ implies $\bar{V}=2\bar{b}-e>\bar{b}>e$.  Thus in volume coordinates, the condition $\bar{a}>\bar{a}_0$ reads
$$
\bar{U}>\max\left\{\frac{1}{\bar{V}}+e,\ \frac{1}{\bar{V}}+e+2\bar{p}\right\}.
$$
Also, if  $\bar{p}\leq 0$ then we can take $\bar{a}_0=1/2e$. 
\end{rem}

\begin{rem}
When the $B$-field is a multiple of $f$, an analogue of  Proposition \ref{prop:anaughtbound} was proved in \cite[Theorem 11.7]{Lo20}.  There, to ensure that the functor $\Phi [1]$ takes  a geometric stability condition to a geometric stability condition, an integrality condition was imposed on the coefficients of $\bar{\omega}$ and $\bar{B}$.  In this article,  we use  Lemma \ref{one dim real estimates}   instead of the integrality condition.
\end{rem}

\begin{rem}\label{DiscriminantBaA}
As noticed in Corollary \ref{GiesekerChamber}, given divisors $B, \omega \in \mathrm{NS}(X)_{\mathbb{Q}}$ with $\omega$ ample, the existence of a Gieseker chamber for the Chern character $v$ can be ensured by requiring $\ch_0(v) \neq 0, \omega \ch_1^B(v) \neq 0$ and  $\Delta(v)\geq 0$.  Thus, one way to ensure the existence of a Gieseker chamber after the autoequivalence $\Phi[1]$ is to require 
$$
\Delta(\Phi(v)[1])=\Delta(v)-ev_0^2+e(fv_1)^2\geq 0.
$$
For instance, this would follow if we impose the constraint $\Delta_e(v):=\Delta(v)- ev_0^2\geq 0$; this assumption would  ensure the existence of Gieseker chambers before and after the autoequivalence $\Phi[1]$. Notice that $\Delta_e$ is invariant under tensoring by line bundles and dualizing, and also it is $\Phi[1]$-invariant.
\end{rem}

One application of our solution to equation \eqref{eq:mainZaBwT} has to do with projectivity of  Bridgeland moduli spaces. The precise result is the following.

\begin{thm}\label{projectiveModuliSpace}
Let $v=(v_0,v_1,v_2)$ be a Chern character with $v_0>0$, $fv_1> 0$, and $\Delta_e(v)\geq 0$. Let
$$
\bar{B}=\frac{\mu_f(v)}{2}\Theta+\lambda f,\ \ \bar{U}=\left(\frac{\mu_f(v)}{2}\right)^2\bar{V}
$$
where $\lambda \in \mathbb{Q}_{>0}$.  Then the moduli space $\mathcal{M}_{\bar{a},\bar{B},\bar{\omega}}(v)$ is projective for $\bar{\omega}$ integral with volume $\bar{V}\gg 0$ (or equivalently $\bar{b}\gg 0$) and $\lambda\gg 0$.
\end{thm}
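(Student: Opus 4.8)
\emph{Strategy.} The plan is to use the fact that the calibrated choices $\bar{B} = \tfrac{\mu_f(v)}{2}\Theta + \lambda f$ and $\bar{U} = \big(\tfrac{\mu_f(v)}{2}\big)^2\bar{V}$ force the transformed data $(a,B,\omega)$ of \eqref{usual:coordinates} to satisfy $f\,\ch_1^B(\Phi v) = 0$; this lands us in the fibre-degree-zero-after-twist situation of Lemma~\ref{estimates_0fiberdegree}, where the Gieseker-chamber bound is essentially volume-independent, and, via the stability equation \eqref{eq:stabeq30} together with the duality of Proposition~\ref{duality}, it identifies $\mathcal{M}_{\bar{a},\bar{B},\bar{\omega}}(v)$ with a twisted Gieseker moduli space, hence with a projective scheme by Matsuki--Wentworth \cite{MW}. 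All divisors that appear are rational, so the relevant moduli spaces exist as proper algebraic spaces by the remark preceding Proposition~\ref{isoUpsilon}, and it suffices to exhibit an isomorphism with a projective scheme.

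\emph{Computing the transformed data.} First I would substitute $\bar{p} := \mu_f(v)/2 > 0$ and $\bar{U} := \bar{p}^2\bar{V}$ into \eqref{eq:V}--\eqref{eq:Ut1}: this gives $V = 2\bar{p}^2\bar{V}$, $p = -\tfrac{1}{2\bar{p}} = -v_0/(fv_1)$, $U = \bar{V}/2$ (equivalently $a = \bar{V}/4 - e/2$, which grows linearly in $\bar{b}$), $q = \lambda + \tfrac{e}{2}(p - \bar{p} - 1)$, and $b = \bar{p}^2\bar{V} + e/2 > e$, so $\omega = \Theta + bf$ is a rational ample class. Using the Chern-character formulas for $\Phi$ from Section~2 one checks $\mu_f(\Phi v) = -v_0/(fv_1) = fB$, hence $f\,\ch_1^B(\Phi v) = 0$ and $\omega\,\ch_1^B(\Phi v) = \Theta\,\ch_1^B(\Phi v)$; a direct computation shows $\Theta\,\ch_1^B(\Phi v)$ is an affine function of $\lambda$ with coefficient $-fv_1 < 0$ and with no dependence on $\bar{b}$, so $\Theta\,\ch_1^B(\Phi v) < 0$ once $\lambda\gg 0$. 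Moreover $\Delta(\Phi v[1]) = \Delta_e(v) + e(fv_1)^2 \ge 0$ by Remark~\ref{DiscriminantBaA} and the hypothesis $\Delta_e(v)\ge 0$, and one computes $\ch_0\big((\Phi v[1])^D\big) = fv_1 > 0$, $f\,\ch_1\big((\Phi v[1])^D\big) = v_0 > 0$, and $f\,\ch_1^{-B}\big((\Phi v[1])^D\big) = 0$.

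\emph{The chain of isomorphisms.} Now fix $\lambda\in\mathbb{Q}_{>0}$ large enough that $\Theta\,\ch_1^B(\Phi v) < 0$, and then take $\bar{\omega} = \Theta + \bar{b}f$ integral with $\bar{b}$ large (depending on $\lambda$); in particular the value $\bar{a}$ determined by $\bar{U} = \bar{p}^2\bar{V}$ satisfies $\bar{a} > 0$ and exceeds the geometricity bound $\bar{a}_0$ of Proposition~\ref{prop:anaughtbound}, which by Remark~\ref{U-estimate-geom} stays bounded as $\bar{b}\to\infty$ while $\bar{a}\to\infty$. Corollary~\ref{cor:stabeq-1} then gives $\Phi\cdot\sigma_{\bar{a},\bar{B},\bar{\omega}} = \sigma_{a,B,\omega}\cdot g$, and Proposition~\ref{isoUpsilon} gives $\mathcal{M}_{\bar{a},\bar{B},\bar{\omega}}(v)\cong\mathcal{M}_{a,B,\omega}\big((-1)^j\Phi(v)\big)$; since $\omega\,\ch_1^B(\Phi v) < 0$, the remark after Proposition~\ref{isoUpsilon} gives $(-1)^j = -1$, so the target is $\mathcal{M}_{a,B,\omega}(w)$ with $w := -\Phi(v)$ (the Chern character of $\Phi(v)[1]$), where $\ch_0(w) < 0$, $\omega\,\ch_1^B(w) > 0$ and $\Delta(w)\ge 0$. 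Proposition~\ref{duality} then yields $\mathcal{M}_{a,B,\omega}(w)\cong M_{-B,\omega}(w^D)$ once $a$ exceeds a threshold which --- because $f\,\ch_1^{-B}(w^D) = 0$ --- may be taken from Lemma~\ref{estimates_0fiberdegree} applied to $w^D$ (with $B$-field $-B$) rather than from Lemma~\ref{estimates a-mini-walls}. Since $\bar{\omega}$ is integral, $\omega^2 = 2\bar{p}^2\bar{V}$ and all $\omega$-pairings have denominators bounded independently of $\bar{b}$, so the constant $n$ of Lemma~\ref{estimates_0fiberdegree} can be fixed once and for all; the first term of that bound is then $O(1/\omega^2)\to 0$ and the remaining term depends only on $v$ and $\lambda$. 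Hence for $\bar{b}$ large enough the growing quantity $a = \bar{V}/4 - e/2$ exceeds the threshold, and chaining the isomorphisms shows $\mathcal{M}_{\bar{a},\bar{B},\bar{\omega}}(v)\cong M_{-B,\omega}(w^D)$ is projective.

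\emph{Main obstacle.} The step I expect to be the real obstacle is this last compatibility of the two limits: the Gieseker-chamber threshold on the image side depends a priori on both $\lambda$ (through $B$) and $\bar{b}$ (through $\omega$), while the only parameter we can drive to infinity there, namely $a$, is pinned to $\bar{V}/4$. The argument closes only because the calibration $\bar{p} = \mu_f(v)/2$ makes the relevant fibre degree vanish --- so that Lemma~\ref{estimates_0fiberdegree}, not Lemma~\ref{estimates a-mini-walls}, governs the threshold --- and the integrality of $\bar{\omega}$ keeps the denominators, hence $n$, under control; together these bound the threshold in $\bar{b}$ for each fixed large $\lambda$, so that $a\to\infty$ wins. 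One should also verify en route that $\bar{\omega}\,\ch_1^{\bar{B}}(v) > 0$, which holds once $\bar{b}$ is large relative to $\lambda$, so that $v$ genuinely occurs as the class of objects of $\Coh^{\bar{B},\bar{\omega}}(X)$ and the stated moduli problem is the expected one.
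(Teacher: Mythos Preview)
Your proposal is correct and follows essentially the same approach as the paper: you compute the transformed data from \eqref{eq:V}--\eqref{eq:Ut1}, verify that the calibration $\bar{p}=\mu_f(v)/2$ and $\bar{U}=\bar{p}^2\bar{V}$ forces $f\,\ch_1^{-B}(\Phi(v)^\vee)=0$, invoke geometricity via Proposition~\ref{prop:anaughtbound} and the stability equation of Corollary~\ref{cor:stabeq-1}, and then use Lemma~\ref{estimates_0fiberdegree} (rather than the coarser Lemma~\ref{estimates a-mini-walls}) together with Proposition~\ref{duality} to land in the Gieseker chamber on the image side, with $n$ fixed independently of $\bar{b}$ thanks to the integrality of $\bar{\omega}$. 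Your explicit remark that the threshold is bounded in $\bar{b}$ while $a=\bar{V}/4-e/2\to\infty$, and the final sanity check that $\bar{\omega}\ch_1^{\bar{B}}(v)>0$ for $\bar{b}\gg0$, match the paper's reasoning (the latter is left implicit there).
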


\begin{rem}\label{FriedmanChamber}
Let us  fix a Chern character $v=(\ch_0,\ch_1,\ch_2)$ where $\ch_0$ and $f\ch_1$ are coprime.  Then  an ample divisor of the form $\omega = \Theta  + bf$ is  suitable in the sense of \cite[Definition 2.2]{FriedmanSO3} for $b \gg 0$ (see the proof of \cite[Lemma 2.3]{FriedmanSO3}, and also  \cite[Lemma 2.3]{FriedmanRank2}).  Given such an ample divisor $\omega$, the corresponding notions of  $\mu_\omega$-stability and $\omega$-Gieseker semistability coincide \cite[Section 1.1 and Proposition 7.1]{FMTes}.  Furthermore, the moduli space $M_\omega (v)$ of $\omega$-Gieseker semistable sheaves of Chern character $v$ is independent of $b$ for $b \gg 0$ (see \cite[Corollary 3.4]{FriedmanSO3} and \cite[Lemma 1.2]{yoshioka_1999}). We refer to these polarizations as polarizations in the \emph{Friedman chamber} and denote the corresponding moduli space by $M_f(v)$. In particular, under the assumption that $g.c.d.(v_0,fv_1)=1$, the moduli space $\mathcal{M}_{\bar{a},\bar{B},\bar{\omega}}(v)$ in Theorem \ref{projectiveModuliSpace} is isomorphic to the Friedman moduli space $M_f(\Phi(v)^{\vee})$.
\end{rem}

\begin{rem}
The point of Theorem \ref{projectiveModuliSpace} can now be stated as follows: By moving the ample class $\bar{\omega} = \Theta + \bar{b}f$ in the definition of a Bridgeland stability condition into the Friedman chamber (along with choices of $\bar{a}, \bar{B}$), we can show that the Bridgeland moduli space is projective.  This is in contrast to moving $\bar{\omega}$ along a ray into the Giesker chamber, which is a common way of obtaining a projective Bridgeland moduli space.
\end{rem}

\begin{proof}[Proof of Theorem \ref{projectiveModuliSpace}]
Our strategy is to apply  Lemma \ref{estimates_0fiberdegree} to   the Bridgeland stability condition $\sigma_{a, -B, \omega}$ and the Chern character $\Phi (v)^\vee$.  First, note that 
\begin{align*}
    \ch_0 (\Phi (v)^\vee) &= \ch_0 (\Phi (v)) = fv_1 > 0 \\
    f\ch_1(\Phi (v)^\vee) &= -f\ch_1 (\Phi (v)) = v_0 > 0\\
    \Delta (\Phi (v)^\vee) &= \Delta (\Phi (v)) = \Delta_e (v) + e(fv_1)^2 > 0 \\
    \mu_f (\Phi (v)^\vee) &= \frac{f\ch_1 (\Phi (v)^\vee)}{\ch_0 (\Phi (v)^\vee)} = \frac{v_0}{fv_1} = \frac{1}{\mu_f(v)}.
\end{align*}
On the other hand, under our assumptions on $\bar{V}$, $\bar{B}$, and $\bar{U}$, the solutions to \eqref{eq:V}, \eqref{eq:p}, \eqref{eq:q}, and \eqref{eq:U} become
\begin{align}
    V&=\frac{\mu_f(v)^2}{2}\bar{V}\label{especial:V}\\
    p&=-\frac{1}{\mu_f(v)}\label{especial:p}\\
    q&= \lambda + \frac{e}{2}\left( -\frac{1}{\mu_f(v)} - \frac{\mu_f(v)}{2} - 1 \right) \\
    U&=\frac{\bar{V}}{2}.\label{especial:U}
\end{align}
We now have 
\[
  -B = -(p\Theta + qf) = \mu_f( \Phi (v)^\vee) \Theta - qf
\]
as well as 
\begin{align*}
    f\ch_1^{-B}(\Phi(v)^{\vee})&=f(-\ch_1(\Phi(v))+B\ch_0(\Phi(v)))\\
    &=f\ch_1(\Phi(v)[1])-p\ch_0(\Phi(v)[1])\\
    &=\ch_0(v)+pf\ch_1(v)\\
    &=0
\end{align*}
and so
\begin{align*}
    \omega \ch_1^{-B}(\Phi (v)^\vee) &= \Theta \ch_1^{-B}(\Phi (v)^\vee) \\
    &= \Theta\ch_1(\Phi(v)^{\vee})+B\Theta\ch_0(\Phi(v)^{\vee})\\
    &=\lambda f\ch_1(v)-\ch_2(v)-\frac{e}{2}\left(\ch_0(v)+\frac{(f\ch_1(v))^2}{2\ch_0(v)}\right).
\end{align*}
Thus, $\omega \ch_1^{-B}(\Phi (v)^\vee)>0$ for $\lambda\gg 0$ (only depending on $v$), and Lemma \ref{estimates_0fiberdegree} says that the stability condition $\sigma_{a,-B,\omega}$ will lie in the Gieseker chamber for the class $\Phi(v)^{\vee}$ as long as $a$ satisfies the inequality of Lemma \ref{estimates_0fiberdegree} and when $\lambda \gg 0$.  (As explained in Remark \ref{DiscriminantBaA}, the conditions on the Chern character $v$ are given to guarantee the existence of a Gieseker chamber after the autoequivalence $\Phi[1]^D$.)  

 Next, we show that we can choose an integer $n$ so that $n(\omega \ch_1 (C))\omega^2 \in\mathbb{Z}$ for all $C \in D^b(X)$, as needed in the proof of Lemma \ref{estimates_0fiberdegree}. Notice that since $\bar{V}$ is integral,  from \eqref{especial:V} we know  $2\ch_0(v)^2V$ is also integral, but 
\[
  2\ch_0(v)^2V = 4\ch_0(v)^2 b - 2e\ch_0(v)^2
\]
so $4\ch_0(v)^2b$ is also integral, i.e.\ $b \in \frac{1}{4\ch_0(v)^2}\mathbb{Z}$. Therefore, we can  choose an integer $n$ (depending only on $\ch_0(v)$) such that $n(\omega \ch_1 (C))\omega^2$ is an integer for all $C \in D^b(X)$.

Applying Lemma \ref{estimates_0fiberdegree} to $\sigma_{a,-B,\omega}$ and $\Phi (v)^\vee$ for the above choice of $n$, we obtain that for 
\small{
\begin{equation}\label{eq:temp1}
\frac{U-e}{2}>\max\left\{\frac{n(\Theta\ch_1^{-B}(\Phi(v)^{\vee}))^3}{2\omega^2 f\ch_1(v)},\frac{n(\Theta\ch_1^{-B}(\Phi(v)^{\vee}))^3}{2\omega^2 f\ch_1(v)}-2n\ch_2^{-B}(\Phi(v)^{\vee})\Theta\ch_1^{-B}(\Phi(v)^{\vee})\right\}
\end{equation}
}
\normalsize
the moduli space $\mathcal{M}_{a,-B,\omega}(\Phi(v)^\vee)$ coincides with the Gieseker moduli space $M_{-B,\omega}(\Phi(v)^{\vee})$.  Since $\omega^2 = V = \frac{\mu_f(v)^2\bar{V}}{2}$, if we assume that $\bar{\omega}$ is integral (so $\bar{V}$ is an integer), then  $2\omega^2/\mu_f(v)^2 = \bar{V} \geq 1$, and so 
\[
  \frac{1}{\omega^2} \leq \frac{2\ch_0(v)^2}{(f\ch_1(v))^2}.
\]
As a result, if we require the  condition 
\small{
\begin{equation}\label{eq:temp2}
U-e>\max\left\{\frac{2n(\Theta\ch_1^{-B}(\Phi(v)^{\vee}))^3\ch_0(v)^2}{(f\ch_1(v))^3},\frac{2n(\Theta\ch_1^{-B}(\Phi(v)^{\vee}))^3\ch_0(v)^2}{(f\ch_1(v))^3}-2n\ch_2^{-B}(\Phi(v)^{\vee})\Theta\ch_1^{-B}(\Phi(v)^{\vee})\right\}
\end{equation}
}
\normalsize 
which is stronger than \eqref{eq:temp1}, then the moduli space $\mathcal{M}_{a,-B,\omega}(\Phi(v)[1]^{D})$ coincides with the Gieseker moduli space $M_{-B,\omega}(\Phi(v)^{\vee})$. This bound can be attained by taking $\bar{V}$ large enough by virtue of \eqref{especial:U}.

Finally, by taking $\bar{V}$ large enough it is also possible to attain the bound of Remark \ref{U-estimate-geom}.  Then together with Corollary \ref{cor:stabeq-1}, we obtain the isomorphisms
\[
\mathcal{M}_{\bar{a},\bar{B},\bar{\omega}}(v)\cong \mathcal{M}_{a,-B,\omega}(\Phi(v)[1]^{D})\cong M_{-B,\omega}(\Phi(v)^{\vee}).
\]
\end{proof}

The following theorem gives a possible relation between the Friedman moduli spaces before and after the relative Fourier-Mukai transform.

\begin{thm}\label{newisobyPhi}
Let $v=(v_0, v_1, v_2)$ be a Chern character vector with $v_0>0$, $fv_1>0$, $g.c.d(v_0,fv_1)=1$, and satisfying the condition on the discriminant $\Delta_e(v)+e(fv_1)^2\geq 0$. Let $E$ be a sheaf with $\ch(E)=v$ and $\bar{B}=\lambda f$. Suppose that there exists a constant $b_0$ such that $E$ is $\bar{B}$-twisted Gieseker semistable with respect to $\bar{\omega}=\Theta+\bar{b}f$ for all $\bar{b}>b_0$, and that there is a constant $\bar{a}_0$, only depending on $E$ and $\bar{B}$, such that for any $\bar{b}>b_0$ and any $\bar{a}>\bar{a}_0$, $E$ is $\sigma_{\bar{a},\bar{B},\bar{\omega}}$-semistable.  Then
    $$
    \Phi(E)[1]=F^D
    $$
for some sheaf $F$ that is Gieseker semistable with respect to $\Theta+bf$ for every $b\gg 0$. Moreover, if the moduli space of $\sigma_{\bar{a}, \bar{B}, \bar{\omega}}$-semistable objects is constant for $\bar{a}>\bar{a}_0$ and $\bar{b}>b_0$, then  $\Phi[1]^D$ induces an isomorphism between the Friedman moduli spaces $M_f(v)$ and $M_f(\Phi(v)^{\vee})$.
\end{thm}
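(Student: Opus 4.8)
The strategy is to combine the equation of Bridgeland stability conditions of Corollary~\ref{cor:stabeq-1} with the duality results of Proposition~\ref{duality} and \cite[Theorem~4.1]{martinez2017duality}. Since $\bar B=\lambda f$ forces $\bar p=0$, the solution formulas \eqref{usual:coordinates} collapse to $p=0$, $B=(\lambda-\tfrac{e}{2})f$, $\omega=\Theta+(\bar a+e)f$, $a=\bar b-e$, and $T=\left(\begin{smallmatrix}0&1\\-1&0\end{smallmatrix}\right)$; in particular $B$ is again a multiple of the fiber, the polarization $\omega$ moves out along the ray $\Theta+bf$ as $\bar a\to\infty$, and $a\to\infty$ as $\bar b\to\infty$. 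Fix $\bar a>\bar a_0$ and $\bar b>b_0$. Since $E$ is $\sigma_{\bar a,\bar B,\bar\omega}$-semistable, Corollary~\ref{cor:stabeq-1} and the definition of the $\Aut$-action (the element $g$ only relabeling phases) show that $\Phi(E)$ is $\sigma_{a,B,\omega}$-semistable; hence $\Phi(E)\cong G[k]$ for some $Z_{a,B,\omega}$-semistable $G\in\Coh^{B,\omega}(X)$ and some $k\in\ZZ$. A direct computation with the Chern-character formulas for $\Phi$ of Section~2 gives $\omega\,\ch_1^{B}(\Phi E)=v_2-\bar a\,v_0-\lambda\,(fv_1)$, which is negative once $\bar a$ is large because $v_0>0$; since every object of $\Coh^{B,\omega}(X)$ satisfies $\omega\ch_1^{B}\ge 0$ and $\Phi$ sends a sheaf to a two-term complex, this forces $k=-1$. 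Thus $\Phi(E)[1]=G$ is $(Z_{a,B,\omega},\Coh^{B,\omega}(X))$-semistable, and $\ch(\Phi(E)[1])=-\Phi(v)$ has negative rank $-fv_1$, positive $\omega\ch_1^{B}$, and discriminant $\Delta(\Phi(v))=\Delta_e(v)+e(fv_1)^2\ge 0$.

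Keeping this large $\bar a$ fixed, I would then enlarge $\bar b$ so that $a=\bar b-e$ exceeds the explicit bound of Proposition~\ref{duality}/Corollary~\ref{GiesekerChamber} for the triple $(-B,\omega,\Phi(v)^\vee)$ — this bound depends on $\omega=\Theta+(\bar a+e)f$, which is why $\bar b$ must be chosen after $\bar a$. Then \cite[Theorem~4.1]{martinez2017duality} together with Remark~\ref{largevolumelimit} shows that $F:=(\Phi(E)[1])^{D}$ is a $(-B)$-twisted Gieseker semistable sheaf with respect to $\omega=\Theta+(\bar a+e)f$. In particular $F$ is a genuine sheaf and, since $(-)^{D}$ is an involution, $\Phi(E)[1]=F^{D}$. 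Running this for every sufficiently large $\bar a$ — the object $F$ does not depend on $\bar a$ — shows that $F$ is $(-B)$-twisted Gieseker semistable with respect to $\Theta+bf$ for all $b\gg 0$. Finally, $\ch(F)=\Phi(v)^\vee$ has rank $fv_1$ and fiber degree $v_0$, which are coprime by hypothesis, and $-B$ is a multiple of $f$; hence for $b\gg 0$ the polarization $\Theta+bf$ lies in the Friedman chamber for $\ch(F)$, where $\mu_{\Theta+bf}$-stability, $\mu_{\Theta+bf}$-semistability, $(-B)$-twisted Gieseker semistability and Gieseker semistability all coincide (the twisted notion being squeezed between $\mu$-stability and $\mu$-semistability). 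Therefore $F$ is Gieseker semistable with respect to $\Theta+bf$ for all $b\gg 0$, proving the first assertion.

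For the ``moreover'' part, assume $\mathcal M_{\bar a,\bar B,\bar\omega}(v)$ is independent of $(\bar a,\bar b)$ in the allowed range. Enlarging $\bar a_0$ and $b_0$ if necessary, for $\bar a$ sufficiently large $\sigma_{\bar a,\bar B,\bar\omega}$ lies in the Gieseker chamber for $v$ (Corollary~\ref{GiesekerChamber}), so by constancy $\mathcal M_{\bar a,\bar B,\bar\omega}(v)\cong M_{\bar B,\bar\omega}(v)$ for all $\bar a>\bar a_0$; since $\gcd(v_0,fv_1)=1$ and $\bar B$ is a multiple of $f$, the latter equals $M_f(v)$ for $\bar b\gg 0$ (Remark~\ref{FriedmanChamber}), hence for all $\bar b>b_0$ by constancy. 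In particular $\mathcal M_{\bar a,\bar B,\bar\omega}(v)=M_f(v)$ for every $(\bar a,\bar b)$ in the range, which decouples the chamber conditions on the two sides. Now Proposition~\ref{isoUpsilon} — whose hypotheses are supplied by Corollary~\ref{cor:stabeq-1}, with the integer $j$ there equal to $1$ by the sign computation above — gives an isomorphism of good moduli spaces $\mathcal M_{\bar a,\bar B,\bar\omega}(v)\cong\mathcal M_{a,B,\omega}(-\Phi(v))$ induced by $\Phi[1]$, and \cite[Theorem~4.1]{martinez2017duality} gives $\mathcal M_{a,B,\omega}(-\Phi(v))\cong\mathcal M_{a,-B,\omega}(\Phi(v)^\vee)$ induced by $(-)^{D}$. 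Choosing, as in the first paragraph, $\bar a\gg 0$ and then $\bar b\gg 0$ depending on $\bar a$ (so that $\omega=\Theta+(\bar a+e)f$ is in the Friedman chamber for $\Phi(v)^\vee$ and $a=\bar b-e$ is above the Gieseker bound for $\Phi(v)^\vee$ with respect to $(-B,\omega)$), the right-hand side becomes $M_{-B,\omega}(\Phi(v)^\vee)=M_f(\Phi(v)^\vee)$. Composing the identifications, $\Phi[1]^{D}$ induces an isomorphism $M_f(v)\cong M_f(\Phi(v)^\vee)$.

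The step I expect to cause the most trouble is the bookkeeping of quantifiers: the bound of Corollary~\ref{GiesekerChamber}/Proposition~\ref{duality} placing $\sigma_{a,B,\omega}$ in the Gieseker chamber for $\Phi(v)^\vee$ depends on $\omega=\Theta+(\bar a+e)f$, hence on $\bar a$, while the bounds making the shift equal $[1]$ and placing $\omega$ in the Friedman chamber for $\Phi(v)^\vee$ depend on $\bar a$ alone; one must therefore fix $\bar a$ large first and only then take $\bar b$ large depending on $\bar a$, and one must check that the ``decoupling'' in the previous paragraph — the only place the constancy hypothesis enters — indeed lets both source and target be identified with Friedman moduli spaces for a single admissible choice of $(\bar a,\bar b)$. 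The Chern-character computations, the two-term-complex property of $\Phi$, and the comparison of twisted with untwisted Gieseker stability in the Friedman chamber are routine.
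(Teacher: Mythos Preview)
Your proof is correct and follows essentially the same route as the paper: both exploit that $\bar p=0$ collapses the solution to $V=\bar U$, $U=\bar V$, $B=(\lambda-\tfrac{e}{2})f$, so that fixing $\bar a$ determines $\omega$ and then enlarging $\bar b$ drives $a=\bar b-e$ past the Gieseker-chamber bound for $\Phi(v)^\vee$ with respect to $(-B,\omega)$. The paper carries out the same quantifier bookkeeping (fix $\bar U$, then take $\bar V$ large) but invokes the explicit estimate of Lemma~\ref{estimates a-mini-walls} with $n=2$ rather than the packaged Corollary~\ref{GiesekerChamber}/Proposition~\ref{duality}, and for that reason insists on $\bar a,\bar b\in\ZZ$ so that $n(\omega\ch_1)\omega^2$ is integral; you should likewise restrict to rational $\bar a$ so that $\omega$ is a $\QQ$-divisor when you appeal to Corollary~\ref{GiesekerChamber}. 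Your explicit determination of the shift via the sign of $\omega\ch_1^B(\Phi E)$ and the two-term property of $\Phi$ is a nice addition that the paper leaves implicit.
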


The difference between the first and the second claim in Theorem \ref{newisobyPhi} is, that a priori, the bound $\bar{a}_0$ depends on the particular sheaf $E$.

\begin{proof}
As before, by Remark \ref{DiscriminantBaA}, the discriminant condition ensures that the Gieseker chambers of Corollary \ref{GiesekerChamber} exist for both the Chern characters $v$ and $\Phi (v)[1]$. Also, by Remark \ref{FriedmanChamber} we know that the Friedman moduli spaces before and after the relative Fourier-Mukai transform exist since 
$$
g.c.d(\ch_0(\Phi(v)^{\vee}),f\ch_1(\Phi(v)^{\vee}))=g.c.d(fv_1,v_0)=1.
$$
Now, the solutions \eqref{eq:V}-\eqref{eq:Ut1} are given by
\begin{equation}\label{sol_pbar=2}
V=\bar{U},\ U=\bar{V},\ B=\left(\lambda-\frac{e}{2}\right)f.
\end{equation}
Then by our hypothesis we can choose:
\begin{enumerate}
    \item $\bar{a}\in\mathbb{Z}$ so that 
    $\displaystyle \bar{a}>\max\left\{1/2e,\ \bar{a}_0\right\}$. By Proposition \ref{prop:anaughtbound}, this will guarantee that $\Phi[1]\cdot \sigma_{\bar{a},\bar{B},\bar{\omega}}$ is geometric. Moreover, we can take $\bar{a}$ large enough so that $\omega$ is in the Friedman chamber for $\Phi(v)^{\vee}$.
\item $\bar{b}\in\mathbb{Z}$ with $\bar{b}>b_0$ such that $\bar{\omega}$ is in the Friedman chamber for $v$.
\end{enumerate}
Now, for any $C \in D^b(X)$ notice that
\begin{align*}
2(\omega\ch_1(C))V&=2(\Theta\ch_1(C)+bf\ch_1(C))V\ \in\mathbb{Z},
\end{align*}
because $V=\bar{U}\in \mathbb{Z}$ and  $2b=V+e\in \mathbb{Z}$.

We will now apply Lemma \ref{estimates a-mini-walls} to $\omega, -B$ and $\Phi(v)^\vee$ with $n=2$.  Note that $\ch_0(\Phi(v)^\vee)=fv_1>0$, while
\begin{align*}
    \omega \ch_1^{-B}(\Phi(v)^\vee) &= \omega (\ch_1(\Phi(v)^\vee) + B\ch_0(\Phi(v)^\vee)) \\
    &= \Theta \ch_1(\Phi(v)^\vee) + bf\ch_1(\Phi(v)^\vee) + \left( \lambda -\frac{e}{2}\right)\ch_0(\Phi(v)^\vee)\\
    &=  \Theta \ch_1(\Phi(v)^\vee) + bv_0 + \left( \lambda -\frac{e}{2}\right) (fv_1).
\end{align*}
Therefore, by increasing $\bar{a}_0$ if necessary and noting $b=\bar{a}+e>\bar{a}_0+e$, we can assume $\omega \ch_1^{-B}(\Phi (v)^\vee)>0$.  Now, if we can choose $\bar{b}$ (which equals $a+e$) large enough so that 
\begin{equation}\label{condition3}
2a>2\mu_{-B,\omega}(\Phi(v)^{\vee})\Delta_{-B,\omega}(\Phi(v)^{\vee}),
\end{equation}
then  Lemma  \ref{estimates a-mini-walls} and (1) together would imply $\Phi(E)[1]^{D}\in M_f(\Phi(v)^{\vee})$ (recall our convention that  $(\_)^D=R\mathcal{H}om(\_,\mathcal{O})[1]$).

Writing $w=\Phi(v)[1]$, we have
\begin{align*}
\mu_{-B,\omega}(w^D)&=\frac{\omega \ch_1^{B}(w)}{-\ch_0(w)}\\
&= \frac{\Theta\ch_1(\Phi(v)[1]) +bf\ch_1(\Phi(v)[1])-\ch_0(\Phi(v)[1])(\lambda-e/2)}{fv_1}\\
&=\frac{-v_2+(b-e)v_0}{fv_1}+\lambda\\
&=\frac{-v_2+(\bar{U}-e)v_0/2}{fv_1}+\lambda,\ \text{ since $2b-e=V=\bar{U}$},\ \text{and}\\
\Delta_{-B,\omega}(w^D)&=(\omega\ch_1^B(w))^2-2\left(\ch_2(w)-\left(\lambda-\frac{e}{2}\right)f\ch_1(w)\right)\ch_0(w)V
\\&=\left(-v_2+(\bar{U}-e)\frac{v_0}{2}+\lambda fv_1\right)^2+2(\Theta v_1+efv_1-\lambda v_0)(fv_1)\bar{U}.
\end{align*}
Thus the right hand side of condition \eqref{condition3} only depends on $\bar{U}$ and $\bar{B}$. Therefore, in order to achieve \eqref{condition3}, we can fix $\bar{U}$ and take $\bar{V}$ (which equals $2\bar{b}-e$) to be large enough  so that 
$$
\bar{V}=U>2\mu_{-B,\omega}(w^D)\Delta_{-B,\omega}(w^D)+e
$$
which is equivalent to  \eqref{condition3}.  This completes the proof of the first claim.

For the second claim, note that under the extra hypothesis, 
 for $\bar{a}\gg 0$ the moduli space of $\sigma_{\bar{a},\bar{B},\bar{\omega}}$-semistable objects of Chern character $v$ is precisely the Friedman moduli space $M_f(v)$. 
\end{proof}

\begin{cor}\label{linebundles}
Let $X$ be a Weiersta\ss\ elliptic surface with Picard number $\rho(X)=2$ and $L$ be a line bundle with $L\cdot f>0$. Then there exists $b_0>0$ such that $\Phi(L)[1]^{D}$ is Gieseker stable with respect to $\omega=\Theta+bf$ for all $b> b_0$. 
\end{cor}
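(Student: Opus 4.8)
This is an application of Theorem \ref{newisobyPhi}. The plan is to run that theorem with $v=\ch(L)$, $E=L$, and $\bar B=\lambda f$ for a suitably small rational $\lambda>0$; everything is formal once one verifies the hypothesis about uniform Bridgeland semistability of $L$, and that verification is where the assumption $\rho(X)=2$ enters.

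First I would check the numerical input. Since $\rho(X)=2$ we may write $c_1(L)=d\Theta+\beta f$ with $d=L\cdot f>0$, and because $L$ is a line bundle $\Delta(v)=c_1(L)^2-2\ch_2(L)=0$; hence $v_0=1$, $fv_1=d>0$, $\gcd(v_0,fv_1)=1$, and $\Delta_e(v)+e(fv_1)^2=e(d^2-1)\ge 0$ using $e=-\Theta^2\ge 0$. Choosing $\lambda\in\QQ$ with $0<\lambda<\beta$ and $\bar B=\lambda f$ is where $\rho(X)=2$ is used, as it puts $c_1(L)$ in the span of $\Theta,f$. Next, since $L$ has rank one and is torsion-free, any proper subsheaf differs from $L$ by an effective class in $\ch_1$, so has no larger slope with respect to any ample polarization, and a subsheaf of equal slope must be $L\otimes I_W$ with $W$ zero-dimensional and hence of strictly smaller twisted Euler characteristic; thus $L$ is $\bar B$-twisted Gieseker (in fact stable) with respect to $\bar\omega=\Theta+\bar bf$ for \emph{every} $\bar b$, so the first stability hypothesis of Theorem \ref{newisobyPhi} holds with any $b_0$.

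The main obstacle is the remaining hypothesis: producing $\bar a_0>0$ depending only on $L$ and $\bar B$, and some $b_0>0$, such that $L$ is $\sigma_{\bar a,\bar B,\bar\omega}$-semistable whenever $\bar b>b_0$ and $\bar a>\bar a_0$. Note that, unlike in Lemma \ref{estimates_0fiberdegree}, the $B$-field is \emph{not} chosen so that $f\ch_1^{\bar B}(v)=0$, and the bound coming from Lemma \ref{estimates a-mini-walls} grows with $\bar b$, so a sharper argument exploiting $\Delta(L)=0$ is needed. For $\bar b$ large we have $\omega\ch_1^{\bar B}(L)=d(\bar b-e)+(\beta-\lambda)>0$, so $L\in\Coh^{\bar B,\bar\omega}(X)$. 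Given a destabilizing subobject $A\hookrightarrow L$ in $\Coh^{\bar B,\bar\omega}(X)$, the long exact cohomology sequence forces $\mathcal{H}^{-1}(A)=0$, so $A$ is a sheaf; writing $A^{\vee\vee}=L(-N)$ with $N$ effective, and combining the Bogomolov--Gieseker inequality for $A$ (Remark \ref{rem:BGineq}) with $\Delta(L)=0$, one bounds the largest $\bar a$ at which $A$ can destabilize $L$ by an explicit expression in $N$, $\omega\ch_1^{\bar B}(L)$, $(\ch_1^{\bar B}(L))^2$ and $\operatorname{length}(A^{\vee\vee}/A)$. Here $\rho(X)=2$ is decisive: writing $N=s\Theta+tf$, membership of $A$ in the heart with $0<\omega\ch_1^{\bar B}(A)<\omega\ch_1^{\bar B}(L)$ confines $s$ to the finitely many values $0\le s\le d$ once $\bar b$ is large, and a direct computation (letting $\bar b\to\infty$, and observing the bound decreases in $t$ and in the length) shows these walls stay below a constant depending only on $L$ and $\bar B$; the same mechanism, with $\omega\ch_1^{\bar B}(A)\le\omega\ch_1^{\bar B}(L)$ replaced by its analogue for destabilizers of rank $\ge 2$, handles the remaining case. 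Taking $\bar a_0$ above this constant and $b_0$ large enough gives the hypothesis. Without $\rho(X)=2$ the numerical types of $N$ form an infinite set and this uniform control is lost, so this is the step I expect to need the most care.

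Finally, Theorem \ref{newisobyPhi} yields $\Phi(L)[1]=F^D$ for a sheaf $F$ that is Gieseker semistable with respect to $\Theta+bf$ for all $b\gg 0$. Since $(\_)^D$ is an involution on $D^b(X)$, this gives $\Phi(L)[1]^D=F$. As $\ch_0(F)=d$ and $f\ch_1(F)=1$ are coprime, Gieseker semistability coincides with stability in the Friedman chamber, so $\Phi(L)[1]^D=F$ is Gieseker stable with respect to $\Theta+bf$ for every $b>b_0$, where $b_0$ is the Friedman threshold for $\ch(F)$.
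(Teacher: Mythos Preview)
Your overall strategy—apply Theorem \ref{newisobyPhi} with $v=\ch(L)$ and reduce everything to finding a bound $\bar a_0$ independent of $\bar b$—is exactly what the paper does, but the crucial step is handled very differently. The paper does not attempt a direct wall analysis. Instead it takes $\bar B=0$ and invokes a result of Arcara--Miles \cite{ArcaraMilesLB}: on a surface whose only curve of negative self-intersection is $\Theta$ (which $\rho(X)=2$ guarantees on a Weierstra{\ss} elliptic surface), the \emph{only} subobject in $\Coh^{0,\bar\omega}(X)$ that can destabilize a line bundle $L$ is $L(-\Theta)$. With a single candidate one computes its wall explicitly as
\[
\bar a=\frac{L^2}{2}-\frac{L\Theta+\bar b\,Lf}{\bar b-e}\Bigl(L\Theta+\frac{e}{2}\Bigr),
\]
observes that the right-hand side converges as $\bar b\to\infty$, and takes $\bar a_0$ above the limit. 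This is the real role of $\rho(X)=2$ in the paper's argument, not merely writing $c_1(L)$ in the span of $\Theta,f$.

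Your direct analysis is in the right spirit but has gaps. For rank-one subobjects the outline (confine $s=N\cdot f$ to finitely many values, then control $t$) is plausible, though the monotonicity in $t$ and in the length is asserted rather than shown. The higher-rank case is more serious: a subobject $A$ of rank $\ge 2$ in the tilted heart does \emph{not} satisfy $A^{\vee\vee}\cong L(-N)$, so your setup does not apply there, and the generic estimate of Lemma \ref{estimates a-mini-walls} produces a bound that grows with $\bar b$; saying ``the same mechanism'' handles this case is not a proof. It is precisely this uniform-in-$\bar b$ control over \emph{all} destabilizers that the Arcara--Miles citation delivers in one stroke. A minor point: your requirement $0<\lambda<\beta$ is unnecessary and fails when $\beta\le 0$; the paper simply takes $\lambda=0$.
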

\begin{proof}
First, the condition $L\cdot f>0$ ensures that $\bar{\omega}\ch_1(L)>0$ for $\bar{b}\gg 0$. Also,
$$
\Delta_e(L)+e(L\cdot f)^2=e((L\cdot f)^2-1)\geq 0,
$$
and therefore there is a Gieseker chamber for the Chern character $\ch(\Phi(L)[1])$.

The conclusion will follow from Theorem \ref{newisobyPhi} once we prove that there is a bound independent of $\bar{b}$ for the walls destabilizing $L$ as long as $\bar{b}$ is large enough.

Thanks to the results of \cite{ArcaraMilesLB} on stability for line bundles on surfaces with a unique curve of negative self intersection, we know that if $\rho(X)=2$ then the only object possibly destabilizing $L$ is $L\otimes\mathcal{O}(-\Theta)$. 

Observe that
\[
(L-\Theta)\cdot\bar{\omega}=L\Theta+\bar{b}(Lf-1)+e
\]
and so we either have  $(L-\Theta)\cdot\bar{\omega}>0$ for $\bar{b} \gg 0$ or $(L-\Theta)\cdot\bar{\omega}\leq 0$ for $\bar{b} \gg 0$.  In the latter case, the sheaf $L(-\Theta)$ does not lie in the category $\Coh^{0,\bar{\omega}}(X)$ and so has no chance of destabilizing $L$, meaning $L$ is always  $\sigma_{\bar{a},0,\bar{\omega}}$-stable.

If, indeed, $L\otimes\mathcal{O}(-\Theta)$ produces a wall, then the value of $\bar{a}$ at which $L\otimes\mathcal{O}(-\Theta)$ $\sigma_{\bar{a},0,\bar{\omega}}$-destabilizes $L$ can be easily computed to be
$$
\bar{a}=\frac{L^2}{2}-\frac{L\Theta+\bar{b}Lf}{\bar{b}-e}\left(L\Theta+\frac{e}{2}\right).
$$
Now, notice that $L\Theta+\bar{b}Lf>0$ for $\bar{b}>-L\Theta/Lf$, and since
$$
\lim_{\bar{b}\rightarrow\infty}-\frac{L\Theta+\bar{b}Lf}{\bar{b}-e}\left(L\Theta+\frac{e}{2}\right)=-Lf\left(L\Theta+\frac{e}{2}\right),
$$
then there exists $\bar{b}_0>-L\Theta/Lf$ such that for all $\bar{b}>\bar{b}_0$
$$
-\frac{L\Theta+\bar{b}Lf}{\bar{b}-e}\left(L\Theta+\frac{e}{2}\right)<-Lf\left(L\Theta+\frac{e}{2}\right)+1.
$$
Thus, for all $\bar{b}>\bar{b}_0$ we have
$$
\bar{a}<\max\left\{\frac{L^2}{2},\frac{L^2}{2}-Lf\left(L\Theta+\frac{e}{2}\right)+1\right\}.
$$
\end{proof}
\begin{rem}
The results in Theorem \ref{newisobyPhi} are consistent with the results obtained in \cite[Theorem 1.1]{LLM}, where transforms of 1-dimensional sheaves were also studied. Moreover, we have the following analogous result.
\end{rem}
\begin{thm}\label{zerofiberdegree}
Let $v$ be a Chern character vector with $v_0>0$, $fv_1=0$ and $\Delta(v)\geq 0$, and let $\bar{B}=\lambda f$ be a $\mathbb{Q}$-divisor such that $\ch^{\bar{B}}_1(v)\cdot \Theta>0$. Then the set of $\bar{B}$-twisted Gieseker semistable sheaves of Chern character $v$ with respect to $\bar{\omega}=\Theta+\bar{b}f$ is independent of $\bar{b}$ for every integer $\bar{b}\gg 0$. Moreover, there exists $b_0>0$ depending only on $v$ and $\bar{B}$ such that  for every integer $\bar{b}\gg0$ the moduli space $M_{\bar{B},\Theta+\bar{b}f}(v)$ is isomorphic to a moduli space of 1-dimensional (twisted) Gieseker semistable sheaves with respect to $\Theta+b_0f$. 
\end{thm}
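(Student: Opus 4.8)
The plan is to run the four-step strategy described in the introduction, exploiting the crucial simplification that when $fv_1 = 0$ the relative Fourier--Mukai transform $\Phi$ sends $v$ to a \emph{rank-zero} Chern character. Consequently the role of the ``Gieseker chamber after $\Phi$'' from Steps~3--4 is taken over by the large-volume region for $1$-dimensional sheaves, where the relevant Bridgeland moduli space is simply a fixed Matsuki--Wentworth moduli space of pure $1$-dimensional twisted Gieseker semistable sheaves.

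First I would record the numerology and produce a Gieseker chamber before $\Phi$ that is uniform in $\bar{\omega}$. Because $\bar{B} = \lambda f$ we have $\bar{p} = 0$, $\bar{q} = \lambda$, and since $f^2 = 0$ and $fv_1 = 0$ we get $f\ch_1^{\bar{B}}(v) = 0$ and $\Theta\ch_1^{\bar{B}}(v) = \Theta v_1 - \lambda v_0 > 0$ by hypothesis; in particular $\mu_f(v) = 0$, so the divisor $B = \mu_f(v)\Theta + \lambda f$ appearing in Lemma~\ref{estimates_0fiberdegree} equals $\bar{B}$, and $\bar{\omega}\ch_1^{\bar{B}}(v) = \Theta\ch_1^{\bar{B}}(v) > 0$ for every $\bar{b}$. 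Applying Lemma~\ref{estimates_0fiberdegree} to $v$ with $B = \bar{B}$, and using that $\bar{\omega} = \Theta + \bar{b}f$ is integral (so one may take $n = 1$) with $\bar{\omega}^2 = 2\bar{b} - e \geq 1$ for $\bar{b} \gg 0$, the $1/\bar{\omega}^2$ term in the stated bound is harmless, so there is a constant $\bar{a}_0$ depending only on $v$ and $\bar{B}$ (and the fixed surface $X$) with $\mathcal{M}_{\bar{a}, \bar{B}, \bar{\omega}}(v) = M_{\bar{B}, \bar{\omega}}(v)$ for all integers $\bar{b} \gg 0$ and all $\bar{a} > \bar{a}_0$. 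I would then fix one integer $\bar{a}_\ast > \bar{a}_0$ (to be enlarged finitely many further times, always in terms of $v, \bar{B}, X$). Setting $\bar{p} = 0$ in \eqref{eq:V}--\eqref{eq:Ut1} solves the central-charge equation \eqref{eq:mainZaBwT} with $\omega^2 = V = 2\bar{a}_\ast + e$, $p = 0$, $B = (\lambda - \tfrac{e}{2})f$, and $2a + e = U = \bar{V} = 2\bar{b} - e$; that is, $\omega = \Theta + (\bar{a}_\ast + e)f$ and $a = \bar{b} - e$. Since the geometricity bound $\tfrac{1}{2(2\bar{b} - e)}$ of Proposition~\ref{prop:anaughtbound} (its value when $\bar{p} = 0$) tends to $0$, for $\bar{b} \gg 0$ the hypotheses of Corollary~\ref{cor:stabeq-1} hold and $\Phi \cdot \sigma_{\bar{a}_\ast, \bar{B}, \bar{\omega}} = \sigma_{a, B, \omega}\cdot g$ for some $g$; note that here $\omega$ and $B$ are \emph{fixed} while $a = \bar{b} - e \to \infty$ as $\bar{b} \to \infty$.

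Next I would transport the moduli space and take the large-volume limit. Write $w := \Phi(v)[1]$; the Chern-character formulas of the preliminaries give $\ch_0(w) = -fv_1 = 0$, $f\ch_1(w) = v_0 > 0$, $\Theta\ch_1(w) = -v_2 - ev_0$, hence $\omega\ch_1^B(w) = \omega\ch_1(w) = \bar{a}_\ast v_0 - v_2$, which is positive after enlarging $\bar{a}_\ast$ past $v_2/v_0$; this both confirms that $j = 1$ is the correct shift in Proposition~\ref{isoUpsilon} and shows that $w$ has non-negative rank and positive $\sigma_{a,B,\omega}$-degree. Proposition~\ref{isoUpsilon} then yields, for every integer $\bar{b} \gg 0$, an isomorphism $\mathcal{M}_{\bar{a}_\ast, \bar{B}, \bar{\omega}}(v) \cong \mathcal{M}_{a, B, \omega}(w)$ induced by $\Phi[1]$, with $\omega, B$ fixed and $a = \bar{b} - e$. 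By Remark~\ref{largevolumelimit} together with Lemma~\ref{one dim real estimates} applied to the $1$-dimensional class $w$, there is a bound depending only on $\ch(w)$, $\omega$ and $B$ — hence now fixed — above which every $\sigma_{a, B, \omega}$-semistable object of class $w$ is a pure $1$-dimensional $B$-twisted $\omega$-Gieseker semistable sheaf; for such $a$ we get $\mathcal{M}_{a, B, \omega}(w) \cong M_{B, \omega}(w)$, a moduli space that does not depend on $a$. Setting $b_0 := \bar{a}_\ast + e$ (a constant depending only on $v, \bar{B}, X$), we obtain for every integer $\bar{b} \gg 0$
\[
M_{\bar{B},\, \Theta + \bar{b} f}(v) = \mathcal{M}_{\bar{a}_\ast, \bar{B}, \Theta + \bar{b} f}(v) \cong \mathcal{M}_{\bar{b} - e,\, B,\, \Theta + b_0 f}(w) = M_{B,\, \Theta + b_0 f}(w),
\]
which is a moduli space of $1$-dimensional (twisted) Gieseker semistable sheaves with respect to $\Theta + b_0 f$ and is manifestly independent of $\bar{b}$ — this is the second assertion. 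Since $\Phi[1]$ is a fixed equivalence of categories sending the $\bar{B}$-twisted $\bar{\omega}$-Gieseker semistable sheaves of Chern character $v$ bijectively onto the (fixed) set of $1$-dimensional $B$-twisted $\omega$-Gieseker semistable sheaves of class $w$, the former set is independent of $\bar{b}$ for $\bar{b} \gg 0$, which is the first assertion.

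The only genuine obstacle I anticipate is the bookkeeping of which constants may depend on which parameters. The decisive point is that the Gieseker-chamber bound \emph{before} $\Phi$ can be chosen uniformly in $\bar{\omega}$: this is exactly what the hypothesis $fv_1 = 0$ provides, since it forces $f\ch_1^{\bar{B}}(v) = 0$ and therefore permits the $\bar{\omega}$-independent estimate of Lemma~\ref{estimates_0fiberdegree} in place of the coarser bound of Lemma~\ref{estimates a-mini-walls}. The second point requiring care is that, after $\Phi$, the ``large parameter'' has switched from $\bar{a}$ to $a = \bar{b} - e$: letting $\bar{b} \to \infty$ must simultaneously keep $\sigma_{\bar{a}_\ast, \bar{B}, \bar{\omega}}$ inside the Gieseker chamber of $v$ (true because that chamber bound is uniform in $\bar{b}$) and drive $\sigma_{a, B, \omega}$ into the $1$-dimensional large-volume region for the \emph{fixed} polarization $\omega = \Theta + b_0 f$ (which is what makes the limiting $1$-dimensional moduli space constant). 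It remains only to check the routine compatibility that the finitely many lower bounds imposed on $\bar{a}_\ast$ (the Gieseker-chamber bound, and $\bar{a}_\ast > v_2/v_0$) and on $\bar{b}$ (ampleness $\bar{\omega}^2 \geq 1$, geometricity, and the $1$-dimensional large-volume bound) can all be met at once by fixing $\bar{a}_\ast$ large and then taking $\bar{b} \gg 0$.
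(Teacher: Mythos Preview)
Your proposal is correct and follows essentially the same approach as the paper's own proof: both use Lemma~\ref{estimates_0fiberdegree} to obtain a Gieseker-chamber bound before $\Phi$ that is uniform in $\bar{\omega}$, fix $\bar{a}$ (your $\bar{a}_\ast$, the paper's $\max\{a_0,1/2e\}$), solve \eqref{eq:V}--\eqref{eq:Ut1} with $\bar{p}=0$ to obtain $V=\bar{U}$, $U=\bar{V}$, $B=(\lambda-\tfrac{e}{2})f$, invoke Proposition~\ref{prop:anaughtbound}/Corollary~\ref{cor:stabeq-1} for geometricity, and then use Lemma~\ref{one dim real estimates} on the rank-zero class $w=\Phi(v)[1]$ to identify the resulting Bridgeland moduli with a fixed $1$-dimensional twisted Gieseker moduli space. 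The only differences are presentational (you emphasize fixing $\bar{a}_\ast$ and letting $\bar{b}\to\infty$, while the paper writes out the explicit bound for $\bar{b}$ coming from Lemma~\ref{one dim real estimates}).
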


The main  idea behind the proof of Theorem \ref{zerofiberdegree} is as follows:   under the constraints  \eqref{eq:V}-\eqref{eq:Ut1}, the moduli of $\sigma_{a, B, \omega}$-semistable objects is constant for fixed $b$ and $a \gg 0$, while the isomorphic moduli space of $\sigma_{\bar{a}, \bar{B}, \bar{\omega}}$-semistable objects is identified with the moduli of twisted Gieseker semistable sheaves  when $\bar{b}$ (which corresponds to $a$) is integral.

\begin{proof}
This follows the same general steps as in the proof of Theorem \ref{newisobyPhi} with a few modifications.

 Let $E$ be a $\bar{B}$-twisted Gieseker semistable sheaf with respect to $\bar{\omega}$ of Chern character $v$, then by Lemma \ref{estimates_0fiberdegree}, since $\bar{\omega}=\Theta+\bar{b}f$ is integral, we know that for 
$$
\bar{a}>\max\left\{\frac{(\Theta\ch_1^{\bar{B}}(v))^3}{2\ch_0(v)\bar{\omega}^2}, \frac{(\Theta\ch_1^{\bar{B}}(v))^3}{2\ch_0(v)\bar{\omega}^2}-\ch_2^{\bar{B}}(v)\Theta\ch_1^{\bar{B}}(v)\right\},
$$
the sheaf $E$ is also $\sigma_{\bar{a},\bar{B},\bar{\omega}}$-semistable. Now, if we choose $\bar{b}_1>0$ so that $2(\Theta+\bar{b}_1f)^2>1$, then for every $\bar{b}\geq \bar{b}_1$ we have $1/(2\bar{\omega}^2)<1$, and so  there are no Bridgeland walls for $\bar{a}\geq a_0$ for every integer $\bar{b}\geq \bar{b}_1$, where
$$
a_0= \max\left\{\frac{(\Theta\ch^{\bar{B}}_1(v))^3}{\ch_0(v)},\ \frac{(\Theta\ch^{\bar{B}}_1(v))^3}{\ch_0(v)}-\ch_2^{\bar{B}}(v)(\Theta\ch^{\bar{B}}_1(v))\right\}.
$$
Next we fix $\bar{U}=2\bar{a}+e$ where $\bar{a}=\max\{a_0,1/2e\}$. This choice guarantees that the stability condition $\Phi[1]\cdot \sigma_{\bar{a},\bar{B},\bar{\omega}}$ is geometric and so it belongs to the same $\widetilde{\mathrm{GL}}^+(2,\mathbb{R})$-orbit as $\sigma_{a,B,\omega}$, where $a$, $B$, and $\omega$ are given by the solutions  \eqref{eq:V}-\eqref{eq:Ut1}. In volume and volume-like coordinates we have
\begin{equation}\label{sol_Bbar=f}
V=\bar{U},\ U=\bar{V},\ B=\left(\lambda-\frac{e}{2}\right)f.
\end{equation} 
This already proves the first part of the statement as for fixed $V$ and $U\gg 0$ (or equivalently, for fixed $\bar{a}$ and $\bar{b} \gg 0$) the moduli space of $\sigma_{a,B,\omega}$-semistable objects is constant and coincides with the moduli space of $B$-twisted Gieseker semistable sheaves of Chern character $w=\ch(\Phi(v)[1])$ with respect to $\omega$. To find a precise bound for $U=\bar{V}$ and so for $\bar{b}$, notice that
\begin{align*}
    \omega\ch_1(w) &= -\ch_2(v)-\frac{e}{2}\ch_0(v)+\frac{V}{2}\ch_0(v)=-\ch_2(v)-\frac{e}{2}\ch_0(v)+\frac{\bar{U}}{2}\ch_0(v),\ \text{and}\\
    \ch^{B}_2(w)&=\Theta\ch_1(v)-\lambda\ch_0(v),
\end{align*}
and so Lemma \ref{one dim real estimates} implies that for
$$
\bar{b}>\max\left\{\frac{\left(-\ch_2(v)-\frac{e}{2}\ch_0(v)+\frac{\bar{U}}{2}\ch_0(v)\right)^2}{2\bar{U}}+e,\frac{\left(-\ch_2(v)-\frac{e}{2}\ch_0(v)+\frac{\bar{U}}{2}\ch_0(v)\right)^2}{2\bar{U}}-\Theta\ch_1(v)+\lambda\ch_0(v)+e\right\}
$$
the Bridgeland moduli space $\mathcal{M}_{a,B,\omega}(w)$ coincides with $M_{B,\omega}(w)$.
\end{proof}

\section{The Euclidean algorithm}

We start this section with the first consequence of the well-behaved Bridgeland wall-crossing.
\begin{prop}\label{inducedrationalmap}
Let $v=(v_0,v_1,v_2)$ be a Chern character with $v_0>0$, $fv_1>0$ and $\Delta_e(v)\geq 0$. Let $\bar{B}=\bar{p}\Theta+\bar{q}f$ be a rational class such that $\bar{\omega}\ch_1^{\bar{B}}(v)>0$ for $\bar{\omega}=\Theta+\bar{b}f$. If $a,B$ and $\omega$ are as in  \eqref{usual:coordinates}, then the autoequivalence $\Phi[1]^{D}$ induces a rational map $f_{\Phi[1]^D}\colon M_{\bar{B},\bar{\omega}}(v)\dashrightarrow M_{-B,\omega}(\Phi(v)[1]^D)$, 
which for $\bar{a}\gg 0$ can be factored as
\small{
$$
\xymatrix{
M_{\bar{B},\bar{\omega}}(v) \ar_{\cong}[d]& \\
\mathcal{M}_{\bar{a},\bar{B},\bar{\omega}}(v) \ar_{\Phi[1]}^{\cong}[d]& \\ 
\mathcal{M}_{a,B,\omega}(\Phi(v)[1])\ar@{-->}[dd] \ar[dr] & \\
& \mathcal{M}_{a_{01},B,\omega}(\Phi(v)[1]) \\
\mathcal{M}_{a_1,B,\omega}(\Phi(v)[1])\ar[ur]\ar@{..}[dd] & \\
& \\
\mathcal{M}_{a_{k-1},B,\omega}(\Phi(v)[1])\ar@{-->}[dd]\ar[dr] & \\
& \mathcal{M}_{a_{0k},B,\omega}(\Phi(v)[1]) \\
\mathcal{M}_{a_k,B,\omega}(\Phi(v)[1])\ar[ur]\ar[d]_{(\_)^D}^{\cong} & \\
M_{-B,\omega}(\Phi(v)[1]^D) &
}
$$
}
\normalsize
where $a<a_{01}<a_1<a_{02}<a_2\cdots<a_{0k}<a_k$ and the values $a_{01},a_{02},\cdots,a_{0k}$ are precisely the walls for the Chern character $\Phi(v)[1]$ along the path $\{\gamma(t)=\sigma_{a+t,B,\omega}\}_{t>0}\subset \Stab^{\Gamma}(X)$. If the wall-crossing contractions are birational so is the map $f_{\Phi[1]^D}$, e.g., if $X$ is an elliptic K3 surface and there are no properly $\bar{B}$-twisted $\bar{\omega}$-semistable sheaves of Chern character $v$ \cite[Theorem 1.1]{BayerMacriMMP}.
\end{prop}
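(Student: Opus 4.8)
The plan is to realize $f_{\Phi[1]^D}$ as a composite of three pieces: an identification of the source twisted Gieseker moduli space with a Bridgeland moduli space, the isomorphism induced by $\Phi[1]$ coming from Corollary~\ref{cor:stabeq-1}, and a finite chain of wall-crossing maps along a ray in $\Stab^\Gamma(X)$ terminating in the Gieseker chamber for $\Phi(v)[1]$.

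First I would fix a rational $\bar{a}$ large enough that the following all hold. Since $v_0>0$, $\bar{\omega}\ch_1^{\bar{B}}(v)>0$ and $\Delta(v)=\Delta_e(v)+ev_0^2\geq 0$, Lemma~\ref{estimates a-mini-walls} and Remark~\ref{largevolumelimit} give $\mathcal{M}_{\bar{a},\bar{B},\bar{\omega}}(v)\cong M_{\bar{B},\bar{\omega}}(v)$; by Corollary~\ref{cor:stabeq-1} one has $\Phi\cdot\sigma_{\bar{a},\bar{B},\bar{\omega}}=\sigma_{a,B,\omega}\cdot g$ with $a,B,\omega,g$ as in \eqref{usual:coordinates}, and choosing $\bar{a}$ rational makes $B,\omega$ rational divisors, so that the proper good moduli spaces of \cite{Alper2018ExistenceOM} are available throughout; and from \eqref{eq:mainZaBwT} together with \eqref{usual:coordinates} one computes $\omega\ch_1^{B}(\Phi(v))=\ch_2^{\bar{B}}(v)-\bar{a}v_0+\bar{p}\,\bar{\omega}\ch_1^{\bar{B}}(v)<0$ for $\bar{a}\gg 0$, so $\Phi(E)[1]$ is the shift landing in $\Coh^{B,\omega}(X)$ and $\omega\ch_1^{B}(\Phi(v)[1])>0$. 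Proposition~\ref{isoUpsilon} applied to $\Upsilon=\Phi$ then yields an isomorphism $\mathcal{M}_{\bar{a},\bar{B},\bar{\omega}}(v)\cong\mathcal{M}_{a,B,\omega}(\Phi(v)[1])$, which with the previous step identifies $M_{\bar{B},\bar{\omega}}(v)$ with $\mathcal{M}_{a,B,\omega}(\Phi(v)[1])$.

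The core of the argument is the target side. By Remark~\ref{DiscriminantBaA} the hypothesis $\Delta_e(v)\geq 0$ forces $\Delta(\Phi(v)[1])=\Delta_e(v)+e(fv_1)^2\geq 0$, so Corollary~\ref{GiesekerChamber} provides a Gieseker chamber for $\Phi(v)[1]$, reached along $\{\gamma(t)=\sigma_{a+t,B,\omega}\}_{t>0}$ once $t$ is large. Local finiteness of walls for the fixed Chern character $\Phi(v)[1]$ (a consequence of the support property) gives only finitely many walls on the relevant compact segment of the ray, say $a<a_{01}<\dots<a_{0k}$; choose $a_i$ with $a_{0i}<a_i<a_{0,i+1}$ and $a_k$ past $a_{0k}$ and past the Gieseker bound of Lemma~\ref{estimates a-mini-walls}. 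On each side of a wall $a_{0i}$, every $\sigma_{a_{i-1},B,\omega}$- (resp.\ $\sigma_{a_i,B,\omega}$-) semistable object of Chern character $\Phi(v)[1]$ stays $\sigma_{a_{0i},B,\omega}$-semistable, so by the universal property of good moduli spaces we obtain a roof $\mathcal{M}_{a_{i-1},B,\omega}(\Phi(v)[1])\rightarrow\mathcal{M}_{a_{0i},B,\omega}(\Phi(v)[1])\leftarrow\mathcal{M}_{a_i,B,\omega}(\Phi(v)[1])$ whose two legs restrict to isomorphisms over the open locus of objects that remain stable at $a_{0i}$; composing over $i$ produces the rational map $\mathcal{M}_{a,B,\omega}(\Phi(v)[1])\dashrightarrow\mathcal{M}_{a_k,B,\omega}(\Phi(v)[1])$. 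Finally, as $\ch_0(\Phi(v)[1])=-fv_1<0$, $\omega\ch_1^{B}(\Phi(v)[1])>0$ and $\Delta(\Phi(v)[1])\geq 0$, the negative-rank Proposition~\ref{duality} identifies $\mathcal{M}_{a_k,B,\omega}(\Phi(v)[1])\cong M_{-B,\omega}(\Phi(v)[1]^{D})$ for $a_k\gg 0$. Concatenating these steps gives the displayed diagram and realizes $f_{\Phi[1]^D}$ as the functor $E\mapsto\Phi(E)[1]^{D}$ restricted to the dense open locus where the output is a $(-B)$-twisted $\omega$-Gieseker semistable sheaf.

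The final assertion is then nearly formal: a composite of isomorphisms with birational maps is birational, so if every leg $\mathcal{M}_{a_{i-1},B,\omega}(\Phi(v)[1])\dashrightarrow\mathcal{M}_{a_i,B,\omega}(\Phi(v)[1])$ is birational then so is $f_{\Phi[1]^D}$. For the stated example, if $X$ is an elliptic K3 surface with no properly $\bar{B}$-twisted $\bar{\omega}$-semistable sheaves of Chern character $v$, then the Mukai vector is primitive and $(\bar{B},\bar{\omega})$ is generic; since $\Phi$ preserves S-equivalence the same genericity holds for $\Phi(v)[1]$ at the chamber endpoints, and the Bayer--Macr\`i analysis \cite{BayerMacriMMP} of wall-crossing for primitive Mukai vectors on a K3 surface shows each contraction is birational (indeed a composition of flops of holomorphic symplectic manifolds). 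I expect the main obstacle to be the bookkeeping forced by the fact---flagged already in Step~4 of the introduction---that the Gieseker chambers before and after $\Phi$ do \emph{not} correspond, so one is obliged to cross the walls for $\Phi(v)[1]$: the delicate points are confirming finiteness of walls and the existence of proper good moduli spaces along the whole ray, setting up the roof at each wall with dense wall-stable locus, and keeping track of the $[1]$-shift and the signs of $\omega\ch_1^{B}$ needed to invoke Propositions~\ref{isoUpsilon} and~\ref{duality}.
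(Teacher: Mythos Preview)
Your proposal is correct and follows precisely the approach the paper encodes in the commutative diagram of the proposition itself; the paper presents this proposition as an immediate ``consequence of the well-behaved Bridgeland wall-crossing'' and gives no separate proof beyond the diagram, so you have supplied exactly the details the authors leave implicit. Your verification that $\omega\ch_1^{B}(\Phi(v))=\ch_2^{\bar{B}}(v)-\bar{a}v_0+\bar{p}\,\bar{\omega}\ch_1^{\bar{B}}(v)<0$ for $\bar{a}\gg 0$ (pinning down the shift $[1]$), your use of Remark~\ref{DiscriminantBaA} to guarantee a Gieseker chamber for $\Phi(v)[1]$, and your invocation of Proposition~\ref{duality} on the negative-rank side are all in line with how the paper assembles its machinery elsewhere.
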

\begin{rem}\label{rem:chamberremark}
By Remark \ref{genericity},  the following statements are equivalent for a fixed Chern character $v$ and fixed $\bar{B}, \bar{\omega}$: 
\begin{itemize}
    \item There are no properly $\bar{B}$-twisted $\bar{\omega}$-semistable sheaves of Chern character $v$.
    \item Whenever the moduli of $\sigma_{\bar{a},\bar{B},\bar{\omega}}$-semistable objects of Chern character $v$ coincides with  the moduli of $\bar{B}$-twisted $\bar{\omega}$-Gieseker semistable sheaves of the same Chern character, the stability condition $\sigma_{\bar{a},\bar{B},\bar{\omega}}$ lies in a Bridgeland chamber for $v$. Moreover, if for some $\bar{a}_0$ the stability condition $\sigma_{\bar{a}_0,\bar{B},\bar{\omega}}$ lies on a Bridgeland chamber then $\sigma_{\bar{a},\bar{B},\bar{\omega}}$ lies on a Bridgeland chamber for all $\bar{a}\gg 0$.
\end{itemize}
The second statement is equivalent to saying that  $\sigma_{\bar{a},\bar{B},\bar{\omega}}$ is generic with respect to $v$ in the sense of \cite[Theorem 1.1]{BayerMacriMMP}   for $\bar{a}\gg 0$, which allows us to claim the birational wall-crossing maps.
\end{rem}

Now, let $E\in D^b(X)$ be an object with Chern character table given by
$$
v(E)=\begin{array}{|c|c|}\hline n & d\\ \hline c & s\\ \hline \end{array}
$$
where $n=\ch_0(E), d=f\ch_1(E), c=\Theta \ch_1(E)$ and $s=\ch_2(E)$.

Below, we define  two autoequivalences depending on $\ch(E)$.  We do not consider the case $n=d$ since we are only concerned with applications where $n, d$ are coprime:
\begin{itemize}
    \item If $n>d$ we can write $n=md+r$ for some integers $0\leq r<d$ and $m>0$. Consider the composition of autoequivalences $\Psi_{m}(E)= \Phi(\Phi(E)^{\vee}\otimes\mathcal{O}(-m\Theta))^{\vee}$, which numerically becomes:
\small{
$$
\begin{diagram}
\node{\begin{array}{|c|c|}\hline n & d\\ \hline c & s\\ \hline \end{array}}\arrow{s,l}{\Phi[1]}\arrow[2]{e,t}{\Psi_m}\node{}\node{\begin{array}{|c|c|}\hline r & d\\ \hline c-m(s+er)-\frac{e}{2}dm^2 & s\\ \hline \end{array}}\\
\node{\begin{array}{|c|c|}\hline -d & n\\ \hline -s+\frac{e}{2}d-en & c+ed-\frac{e}{2}n\\ \hline \end{array}}\arrow{s,l}{(\_)^D}\node{}\node{\begin{array}{|c|c|}\hline -r & d\\ \hline c-m(s+er)-\frac{e}{2}dm^2 & -s\\ \hline \end{array}}\arrow{n,r}{(\_)^D}\\
\node{\begin{array}{|c|c|}\hline d & n\\ \hline -s+\frac{e}{2}d-en & -c-ed+\frac{e}{2}n\\ \hline \end{array}}\arrow[2]{e,b}{\otimes\mathcal{O}(-m\Theta)}\node{}\node{\begin{array}{|c|c|}\hline d & r\\ \hline -s+\frac{e}{2}d-er & ms-c-ed\left(1-\frac{m^2}{2}\right)+er\left(\frac{1}{2}+m\right)\\ \hline \end{array}}\arrow{n,r}{\Phi[1]}
\end{diagram}
$$
}
\normalsize 
\item If $n<d$ we can write $d=kn+r$ for some integers $0\leq r<n$ and $k>0$. Consider the autoequivalence $\Upsilon_{k}(E)=E\otimes\mathcal{O}(-k\Theta)$:
\Large{
$$
\begin{diagram}
\node{\begin{array}{|c|c|}\hline n & d\\ \hline c & s\\ \hline \end{array}}\arrow{e,t}{\Upsilon_{k}}\node{\begin{array}{|c|c|}\hline n & r\\ \hline c +ekn& s-kc-\frac{e}{2}k^2n\\ \hline \end{array}}
\end{diagram}
$$
}
\normalsize
\end{itemize} 

\begin{thm}\label{ecuclidean}
Let $X$ be a Weierstra\ss\ elliptic K3 surface. Suppose that $v=(v_0,v_1,v_2)$ is a Chern character satisfying $v_0>0$, $fv_1>0$, $g.c.d.(v_0,fv_1)=1$ and $\Delta_e(v)\geq 0$. If $B=p\Theta+qf$, ${\omega}=\Theta+{b}f$, and there are no strictly $B$-twisted $\omega$-semistable sheaves of Chern character $v$, then there is a birational map
$$
M_{{B},{\omega}}(v)\dashrightarrow \Pic^0(X)\times\Hilb^{K}(X),
$$
for some $K\geq 0$.
\end{thm}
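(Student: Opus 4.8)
The plan is to run a Euclidean algorithm on the pair $(v_0,fv_1)$ using the autoequivalences $\Psi_m$ and $\Upsilon_k$ introduced above, at each stage replacing $M_{B,\omega}(v)$ by a birational moduli space attached to a Chern character with strictly smaller $\min(\ch_0,f\ch_1)$, until the rank is $1$ and the fiber degree is $0$; the moduli space at that endpoint is a Hilbert scheme of points, and since $\Pic^0(X)=0$ on a K3 surface this is exactly $\Pic^0(X)\times\Hilb^K(X)$. Note first that $\gcd(v_0,fv_1)=1$ makes $v$ a primitive Mukai vector, with $\Delta_e(v)$ equal to the Mukai self-pairing $\langle v,v\rangle$ (as $e=2$ for an elliptic K3), so by \cite{BayerMacriMMP} all the twisted Gieseker moduli spaces $M_{B',\omega'}(v)$ attached to $v$ and to polarizations of the form $\Theta+b'f$ are birational to one another. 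Hence I am free to choose the polarization conveniently at each stage, in particular so that the hypotheses of Proposition \ref{inducedrationalmap} hold and, by Remark \ref{rem:chamberremark}, no strictly twisted-semistable sheaves occur and the relevant wall-crossings are honestly birational.

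The heart of the argument is that a single Euclidean step is birational. Suppose the current Chern character $w$ has $\ch_0(w)>0$, $fw_1>0$, and say $\ch_0(w)>fw_1$; write $\ch_0(w)=m\cdot fw_1+r$ with $0\le r<fw_1$. If $r>0$ then $\Psi_m=\Phi[1]^D\circ(\otimes\mathcal O(-m\Theta))\circ\Phi[1]^D$ induces a birational map $M_{B',\omega'}(w)\dashrightarrow M_{B'',\omega''}(\Psi_m(w))$: each of the two occurrences of $\Phi[1]^D$ gives a birational map by Proposition \ref{inducedrationalmap}, while tensoring by a line bundle and applying the derived dual induce isomorphisms of the corresponding twisted Gieseker moduli spaces after the evident shifts of the $B$-field. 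Here $\Psi_m(w)$ has rank $r<fw_1$ and fiber degree $fw_1$, so $\min(\ch_0,f\ch_1)$ strictly drops, while $\Delta_e$ and the coprimality of $(\ch_0,f\ch_1)$ are preserved by all three operations (Remark \ref{DiscriminantBaA}), so the hypotheses persist. If instead $r=0$ then coprimality forces $fw_1=1$, and a single $\Phi[1]^D$ followed by a twist by $\mathcal O(-\ch_0(w)\Theta)$ already reaches a Chern character of rank $1$ and fiber degree $0$. The case $fw_1>\ch_0(w)$ is symmetric: $\Upsilon_k=\otimes\mathcal O(-k\Theta)$ with $fw_1=k\ch_0(w)+r$ is an isomorphism of twisted Gieseker moduli and lowers the fiber degree to $r<\ch_0(w)$; and once $\ch_0(w)=1$ one twists by $\mathcal O(-(fw_1)\Theta)$ to reach fiber degree $0$. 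Because $\gcd(v_0,fv_1)=1$, finitely many such steps terminate at a Chern character $v'$ with $\ch_0(v')=1$ and $f\ch_1(v')=0$.

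It then remains to identify the terminal moduli space. A sheaf of Chern character $v'=(1,v_1',v_2')$ is a rank-one torsion-free sheaf, hence of the form $L\otimes\mathcal I_Z$ where $L$ is the unique line bundle with $c_1(L)=v_1'$ (uniqueness because $\Pic^0(X)=0$) and $Z\subset X$ is a zero-dimensional subscheme of length $K=\tfrac{(v_1')^2}{2}-v_2'=\tfrac12\Delta(v')=\tfrac12(\Delta_e(v)+2)\ge 0$, using that $\Delta_e$ is constant along the algorithm, that $\ch_0(v')=1$, and that $e=2$. Every such sheaf is twisted Gieseker stable, so the terminal moduli space equals $\Hilb^K(X)=\Pic^0(X)\times\Hilb^K(X)$. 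Composing the finitely many birational maps produced along the way yields the asserted birational equivalence $M_{B,\omega}(v)\dashrightarrow\Pic^0(X)\times\Hilb^K(X)$, with $K=\tfrac12\Delta_e(v)+1$.

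The step I expect to require the most care is the bookkeeping in the second paragraph: Proposition \ref{inducedrationalmap} does not permit one to choose the target polarization freely --- it is determined by the source data through the relations \eqref{usual:coordinates} --- so at every stage one must check that the polarization can be taken inside a Bridgeland chamber for the current Chern character, equivalently that no strictly twisted-semistable sheaves occur, so that each $\Phi[1]^D$-step is birational rather than merely a dominant rational map. This is precisely what the primitivity of the Mukai vector, preserved by the coprimality hypothesis, buys us via Remark \ref{rem:chamberremark} and the K3 wall-crossing results of \cite{BayerMacriMMP}; tracking how $B,\omega$ evolve under each $\Psi_m$ and $\Upsilon_k$ is then routine but lengthy, and this is essentially the elliptic-K3 analogue of the division algorithm of Bernardara--Hein.
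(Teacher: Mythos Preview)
Your proof follows essentially the same approach as the paper's: both run the Euclidean algorithm on $(v_0,fv_1)$ via the autoequivalences $\Psi_m$ and $\Upsilon_k$, use Proposition \ref{inducedrationalmap} together with Remark \ref{rem:chamberremark} and the Bayer--Macr\`i K3 wall-crossing for primitive vectors to obtain birationality at each $\Phi[1]^D$-step, and identify the rank-$1$ endpoint with a Hilbert scheme. The only differences are cosmetic---the paper reaches rank $1$ and then tensors by $\mathcal O(-\Lambda)$ rather than first reducing the fiber degree to $0$, and does not compute $K$ explicitly---and your final paragraph correctly flags the same bookkeeping subtlety the paper handles by pointing to Remark \ref{rem:chamberremark}.
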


\begin{proof}
 First, we have a couple of observations about our hypotheses:
 \begin{itemize}
     \item As mentioned in Remark \ref{DiscriminantBaA}, $\Delta_e$ is invariant under tensoring by line bundles, dualizing, and also $\Phi[1]$-invariant. Thus, our assumption guarantees that $\Delta_e$ remains nonnegative after applying any of the autoequivalences $\Psi_m$ or $\Upsilon_k$ described above.
     \item Since $g.c.d.(v_0,fv_1)=1$ then the vector $v$ is primitive. The same remains true after applying any of the autoequivalences $\Psi_m$ or $\Upsilon_k$ described above.
 \end{itemize}

The division algorithm allows us to find nonnegative integers $m_0,\dots,m_t$, and $k_0,\dots,k_t$ such that if $\widetilde{\Psi}_v$ denotes the composition of autoequivalences
$$
\widetilde{\Psi}_v:=\Psi_{m_t}\circ\Upsilon_{k_t}\circ\cdots\circ\Psi_{m_1}\circ\Upsilon_{k_1}\circ\Psi_{m_0}\circ\Upsilon_{k_0},
$$
then either $\ch_0(\widetilde{\Psi}_v(v))=1$ or $f\ch_1(\widetilde{\Psi}_v(v))=k_{t+1}\ch_0(\widetilde{\Psi}_v(v))+1$ for some integer $k_{t+1}$. Consider the autoequivalence 
$$\Psi_v:=\begin{cases}\widetilde{\Psi}_v&\text{if}\ \ \ch_0(\widetilde{\Psi}_v(v))=1,\\ \Phi[1]^D\circ\Upsilon_{k_{t+1}}\circ\widetilde{\Psi}_v&\text{otherwise}.\end{cases}
$$
Then since the autoequivalences $\Upsilon_k$ induce isomorphisms between the twisted Gieseker moduli spaces
$$
g_k\colon M_{B',\omega'}(u)\xrightarrow[]{\cong} M_{B'-k\Theta,\omega'}(\Upsilon_k(u))
$$
for every $B'$, $\omega'$ and Chern character $u$ satisfying $\Delta(u)\geq 0$, then by Proposition \ref{inducedrationalmap} the autoequivalences
$\Psi_m$ induce birational maps
$$
f_m:=f_{\Phi[1]^D}\circ g_m\circ f_{\Phi[1]^D}\colon M_{B',\omega'}(u)\dashrightarrow M_{B'',\omega''}(\Psi_m(u))
$$
for some $B'', \omega''$ (see Remark \ref{rem:chamberremark}). Thus, the autoequivalence $\Psi_v$ induces a birational map between $M_{B,\omega}(v)$ and $M_{f}(1,\Lambda,\delta)$ for some $\Lambda\in \mathrm{NS}(X)$ and $\delta\in\frac{1}{2}\mathbb{Z}$. Now, since $\Delta(\Psi_v (E))\geq 0$, then 
$$
\delta=\frac{\Lambda^2}{2}-K
$$
for some integer $K\geq 0$. Thus, tensoring by $\mathcal{O}(-\Lambda)$ we obtain the desired birational map
$$
M_{B,\omega}(v)\dashrightarrow \Pic^0(X)\times \Hilb^K(X).
$$
\end{proof}


\begin{rem}
A version of the Euclidean algorithm of Theorem \ref{ecuclidean} was proven by Bernardara and Hein in \cite{bernardara:hal-00958218} for polarizations in the Friedman chamber, obtaining an isomorphism in that case. This should be thought as a limiting case of our result, specifically when  both $U$ and $V$ approach infinity. But even in the limiting case, the novelty of our approach is the decomposition of the birational map as a composition of Bridgeland wall-crossings. 
\end{rem}

\begin{rem}
In \cite[Corollary 7.7]{FMTes}, an isomorphism to a moduli space of 1-dimensional torsion-free sheaves was obtained on a component of the moduli space $M_f(v)$. On an arbitrary elliptic surface, the birationality of $M_f(v)$ with a moduli space of rank 1 torsion-free sheaves was first obtained by Bridgeland \cite[Theorem 1.1]{FMTes}, and by Yoshioka \cite[Theorem 1.14]{yoshioka_1999}, where irreducibility was also shown. In both cases, no assumption on the discriminant was required. 
\end{rem}

\begin{rem}
Even though only a birational morphism between $M_f(v)$ and $\mathrm{Pic}^0(X)\times \mathrm{Hilb}^K(X)$ was known, in \cite[Theorem 5.13]{Yosh2020} Yoshioka proved the equality of Euler numbers
$$
e(M_f(v))=e(\mathrm{Pic}^0(X)\times \mathrm{Hilb}^K(X)).
$$
Besides that, in the case where all multiple fibers have multiplicity 2, he estimates the codimension of the locus where the birational morphism is not an isomorphism (see \cite[Theorem 4.13]{Yosh2020}).
\end{rem}

\section{Elliptic K3 and Tramel-Xia's construction}
 The following lemma characterizes the effective curves that make the divisor $\Theta+ef$ strictly nef.

\begin{lem}\label{lem:TXprep1}
Let $p \colon X \to \mathbb{P}^1$ be a Weierstra{\ss} elliptic  surface and let $\Theta \subset X$ be any section where $\Theta^2=-e<0$. Then $\Theta + ef$ is a nef divisor and the only irreducible (effective) curve $W$ satisfying $W.(\Theta + ef)=0$ is $W=\Theta$.
\end{lem}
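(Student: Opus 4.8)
The plan is to verify nefness of $\Theta+ef$ directly by pairing it against an arbitrary irreducible curve, and to read off the equality case in the same computation. First I would record the basic intersection numbers on $X$: from $\Theta\cdot f=1$, $f^2=0$, and $\Theta^2=-e$ one gets
\[
(\Theta+ef)\cdot\Theta=-e+e=0,\qquad (\Theta+ef)\cdot f=1,\qquad (\Theta+ef)^2=e>0 .
\]
In particular $\Theta$ is already a curve with $(\Theta+ef)\cdot\Theta=0$, so it remains to show that every \emph{other} irreducible curve pairs strictly positively with $\Theta+ef$; this simultaneously yields nefness, since a divisor is nef as soon as it is non-negative on all irreducible curves.

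Next I would split into cases according to whether an irreducible curve $W\subset X$ is vertical or horizontal for $p$. If $W$ is vertical, then $W$ is contained in a fiber of $p$; since the fibers of a Weierstra\ss\ fibration are geometrically integral by definition, this fiber is reduced and irreducible, so $W$ \emph{is} the whole fiber and hence numerically equivalent to $f$, giving $(\Theta+ef)\cdot W=1>0$. If $W$ is horizontal, set $n:=W\cdot f$, the degree of $W$ over $\mathbb{P}^1$, so $n\geq 1$. If $W=\Theta$ we are in the case already computed. If $W\neq\Theta$, then $W$ and $\Theta$ are distinct irreducible curves on the smooth surface $X$, so $\Theta\cdot W\geq 0$, and therefore
\[
(\Theta+ef)\cdot W=\Theta\cdot W+e\,(f\cdot W)=\Theta\cdot W+en\geq en\geq e>0 .
\]

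Putting these together, $(\Theta+ef)\cdot W\geq 0$ for every irreducible curve $W$, so $\Theta+ef$ is nef; moreover $(\Theta+ef)\cdot W=0$ forces $W=\Theta$, because every $W\neq\Theta$ gives a strictly positive value while $(\Theta+ef)\cdot\Theta=0$. I do not anticipate a real obstacle here; the only points needing care are (i) invoking the irreducibility and reducedness of the fibers of a Weierstra\ss\ fibration, so that a vertical curve is an entire fiber and thus numerically $f$, and (ii) using $e>0$ together with $\Theta\cdot W\ge 0$ for distinct irreducible curves in the final strict inequality — both of which are hypotheses or standard surface facts.
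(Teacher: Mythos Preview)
Your argument is correct and follows essentially the same case split as the paper's proof (vertical versus horizontal curves, with $W=\Theta$ handled separately). If anything, you are more explicit: you spell out that a vertical irreducible curve must equal an entire fiber by the integrality hypothesis on Weierstra{\ss} fibers, and you quantify the horizontal case via $n=W\cdot f\geq 1$ rather than citing an external lemma for $W\cdot f>0$.
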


\begin{proof}
Take any irreducible curve $W \subset X$.  If $W\subseteq \Theta$, then $W=\Theta$ and $W.(\Theta + ef)=0$.  If $W \nsubseteq \Theta$, then we have two cases:
\begin{itemize}
\item $W$ is a vertical divisor, in which case $W.(\Theta + ef)>0$.
\item $W$ is not a vertical divisor, in which case the intersection of $W$ and $\Theta$ is transversal if nonempty.  That is, $W.\Theta \geq 0$ while $W.f>0$ \cite[Lemma 3.15]{Lo11}, giving  us  $W.(\Theta + ef) > 0$.
\end{itemize}
This completes the proof of the lemma.
\end{proof}

For the rest of the section,  we will  consider  Weierstra{\ss} elliptic fibrations of the form $p\colon X \to Y=\mathbb{P}^1$ where $X$ is a K3 surface. In particular, we have  $e=2$ since $K_X\equiv (2g_Y-2+e)f$. For surfaces that contain rational curves of negative self-intersection, Tramel-Xia constructed in \cite{tramel2017bridgeland} a family of stability conditions using nef divisors instead of the ample divisors $\omega$ from the previous sections. These stability conditions sit on the boundary of the geometry chamber, and we will study their behaviour under the action of  the relative Fourier-Mukai transform. 

Using Lemma \ref{lem:TXprep1}, we can now specialize  Tramel-Xia's main results to our setting, giving us:

\begin{thm}\label{TramelXiaSC} (\cite[Theorems 5.4 and 6.11, Lemma 7.2]{tramel2017bridgeland}) Let $p\colon X\rightarrow \mathbb{P}^1$ be a Weierstra\ss\ elliptic K3 surface with a section $\Theta$. Let $\bar{B}=\lambda\Theta$, $\bar{H}=\Theta+2f$, $k\in \mathbb{Z}$ be an integer such that $k+1<-2\lambda<k+2$, and $z\in\mathbb{R}$ such that $z>\lambda^2$. Then there is a heart $\mathcal{B}_{\bar{H},k}\subset D^b(X)$ of a bounded t-structure such that the pair $\sigma^k_{z,\bar{B},\bar{H}}=(Z_{z,\bar{B},H},\mathcal{B}_{\bar{H},k})$ is a stability condition, where
$$
Z_{z,\bar{B},\bar{H}}(E)=-\ch_2^{\bar{B}}(E)+(z-\lambda^2)\ch_0(E)+i\bar{H}\ch_1^{\bar{B}}(E).
$$
In addition, the skyscraper sheaf $\mathcal{O}_x$ is $\sigma^k_{z,\bar{B},\bar{H}}$-stable when $x \notin \Theta$, and $\sigma^k_{z,\bar{B},\bar{H}}$-strictly semistable if $x \in \Theta$ with Jordan-Hölder filtration 
$$
0\to \mathcal{O}_{\Theta}(k+1)\to \mathcal{O}_x\to \mathcal{O}_{\Theta}(k)[1]\to 0.
$$
\end{thm}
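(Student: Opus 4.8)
The plan is to deduce the theorem from the construction of Tramel and Xia in \cite{tramel2017bridgeland}, so the work consists of checking that our geometric situation meets their hypotheses and then identifying the objects in the statement.

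First I would pin down the numerics. Since $p\colon X\to\mathbb{P}^1$ is a Weierstra\ss\ elliptic K3 surface, the relation $K_X\equiv(2g_Y-2+e)f$ together with $K_X\equiv 0$ and $g_Y=0$ forces $e=2$, so that $\bar{H}=\Theta+2f$ is exactly the divisor $\Theta+ef$ of Lemma \ref{lem:TXprep1}. By that lemma, $\bar{H}$ is nef but not ample, with $\bar{H}^2=\Theta^2+4=2>0$ and $\bar{H}\cdot\Theta=0$, and $\Theta$ is the unique irreducible curve $W$ with $W\cdot\bar{H}=0$; moreover $\Theta^2=-2<0$. This is precisely the input required by \cite{tramel2017bridgeland}: a surface carrying a nef, non-ample class whose orthogonal complement meets the effective cone in a single irreducible negative curve. (The vanishing $K_X\equiv 0$ additionally simplifies the Bogomolov-type estimate underlying their support property.)

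Next I would match the remaining parameters. Tramel--Xia build the heart $\mathcal{B}_{\bar{H},k}$ by tilting $\Coh(X)$ in a manner depending on the integer $k$, whose role is to fix the position, relative to the heart, of the $\mu_{\bar{B},\bar{H}}$-slope-zero sheaves supported on $\Theta$, namely the line bundles $\mathcal{O}_\Theta(m)$. One computes $\ch(\mathcal{O}_\Theta(m))=(0,\Theta,m+1)$, hence $\bar{H}\,\ch_1^{\bar{B}}(\mathcal{O}_\Theta(m))=\bar{H}\cdot\Theta=0$ and $\ch_2^{\bar{B}}(\mathcal{O}_\Theta(m))=(m+1)-\lambda\,\Theta^2=m+1+2\lambda$, so that
\[
Z_{z,\bar{B},\bar{H}}(\mathcal{O}_\Theta(m))=-(m+1+2\lambda)\in\mathbb{R}.
\]
Thus $\mathcal{O}_\Theta(m)\in\mathcal{B}_{\bar{H},k}$ with phase $1$ exactly when $m+1+2\lambda>0$, while $\mathcal{O}_\Theta(m)[1]\in\mathcal{B}_{\bar{H},k}$ exactly when $m+1+2\lambda<0$. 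The hypothesis $k+1<-2\lambda<k+2$ says precisely that the threshold $-1-2\lambda$ falls strictly between the consecutive integers $k$ and $k+1$: it rules out the degenerate case where some $\mathcal{O}_\Theta(m)$ has vanishing central charge (which would destroy the support property), and it places $\mathcal{O}_\Theta(k+1)$ in $\mathcal{B}_{\bar{H},k}$ as a sheaf and $\mathcal{O}_\Theta(k)[1]$ in $\mathcal{B}_{\bar{H},k}$; similarly, positivity of $Z_{z,\bar{B},\bar{H}}$ on the tilted heart forces $z>\lambda^2$, the remaining hypothesis. Granting these identifications, the claim that $(Z_{z,\bar{B},\bar{H}},\mathcal{B}_{\bar{H},k})$ has the Harder--Narasimhan and support properties, and hence is a Bridgeland stability condition, is exactly the content of \cite[Theorems 5.4 and 6.11]{tramel2017bridgeland}.

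Finally I would treat the skyscraper sheaves. One has $\mathcal{O}_x\in\mathcal{B}_{\bar{H},k}$ with $Z_{z,\bar{B},\bar{H}}(\mathcal{O}_x)=-1$, so $\mathcal{O}_x$ has phase $1$. For $x\notin\Theta$ the support of $\mathcal{O}_x$ is disjoint from the negative curve, and $\sigma^k_{z,\bar{B},\bar{H}}$-stability of $\mathcal{O}_x$ is \cite[Lemma 7.2]{tramel2017bridgeland}. For $x\in\Theta$, twisting $0\to\mathcal{O}_{\mathbb{P}^1}\to\mathcal{O}_{\mathbb{P}^1}(1)\to\mathcal{O}_x\to 0$ by $\mathcal{O}_{\mathbb{P}^1}(k)$ and pushing forward along $\Theta\hookrightarrow X$ gives the short exact sequence $0\to\mathcal{O}_\Theta(k)\to\mathcal{O}_\Theta(k+1)\to\mathcal{O}_x\to 0$ in $\Coh(X)$; rotating the associated triangle in $D^b(X)$ produces
\[
0\to\mathcal{O}_\Theta(k+1)\to\mathcal{O}_x\to\mathcal{O}_\Theta(k)[1]\to 0
\]
in $\mathcal{B}_{\bar{H},k}$. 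Since $Z_{z,\bar{B},\bar{H}}(\mathcal{O}_\Theta(k+1))=-(k+2+2\lambda)<0$ and $Z_{z,\bar{B},\bar{H}}(\mathcal{O}_\Theta(k)[1])=k+1+2\lambda<0$ by the hypothesis on $\lambda$, both subquotients have phase $1$, so $\mathcal{O}_x$ is strictly semistable with this as a Jordan--H\"{o}lder filtration. I expect the only genuine obstacle to be bookkeeping: reconciling the normalization of Tramel--Xia's central charge and the precise shape of their genericity condition on $\bar{B}$ with the clean inequalities $k+1<-2\lambda<k+2$ and $z>\lambda^2$, and verifying that \cite[Lemma 7.2]{tramel2017bridgeland} applies to $\bar{H}$ without change.
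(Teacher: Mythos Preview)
The paper does not give its own proof of this theorem: it is stated as a direct specialization of \cite[Theorems 5.4 and 6.11, Lemma 7.2]{tramel2017bridgeland}, and the paper only recalls the construction of the heart $\mathcal{B}_{\bar{H},k}$ after the statement. Your proposal is correct and in fact more detailed than anything the paper provides---you verify via Lemma \ref{lem:TXprep1} and the computation $e=2$ that the Tramel--Xia hypotheses hold, compute $Z_{z,\bar{B},\bar{H}}(\mathcal{O}_\Theta(m))=-(m+1+2\lambda)$ to explain why the inequality $k+1<-2\lambda<k+2$ places the threshold between $k$ and $k+1$, and exhibit the filtration of $\mathcal{O}_x$ for $x\in\Theta$ explicitly.

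One small gap worth flagging: for the displayed sequence to be a Jordan--H\"older filtration you need $\mathcal{O}_\Theta(k+1)$ and $\mathcal{O}_\Theta(k)[1]$ to be $\sigma^k_{z,\bar{B},\bar{H}}$-\emph{stable}, not merely of phase $1$; you establish the phase but not the stability. This is part of what \cite[Lemma 7.2]{tramel2017bridgeland} asserts, so it is covered by the citation, but your write-up does not make that dependence explicit.
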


We quickly recall the construction of the heart $\mathcal{B}_{\bar{H},k}$ in \cite{tramel2017bridgeland}. Consider the Mumford slope of a sheaf with respect to the nef class $\bar{H}$:
$$
\mu_{\bar{H}}(E)=\begin{cases}\frac{\bar{H}\ch_1(E)}{\ch_0(E)}&\text{if}\ \ \ch_0(E)\neq 0,\\
+\infty&\text{otherwise}.\end{cases}
$$
Then, as usual, the full subcategories
\begin{align*}
 \mathcal{T}_{\bar{H}}&=\{E\in\Coh(X)\colon \mu_{\bar{H}}(Q)>0\ \text{for all quotient sheaves}\ E\twoheadrightarrow Q\}, \\
 \mathcal{F}_{\bar{H}}&=\{E\in\Coh(X)\colon \mu_{\bar{H}}(F)\leq 0\ \text{for all subsheaves}\ F\hookrightarrow E\}
\end{align*}
form a torsion pair in $\Coh(X)$, allowing us to  form the tilted heart $\Coh^{\bar{H}}(X)=\langle\mathcal{F}_{\bar{H}}[1],\mathcal{T}_{\bar{H}}\rangle$. Now, the full subcategories
\begin{align*}
 \mathcal{T}_{\bar{H},k}&=\{E\in\Coh^{\bar{H}}(X)\colon \Hom(E,\mathcal{O}_{\Theta}(i))=0\ \text{for all}\ i\leq k\}, \\
\mathcal{F}_{\bar{H},k}&=\langle \mathcal{O}_{\Theta}(i)\colon i\leq k\rangle
\end{align*}
form a torsion pair in $\Coh^{\bar{H}}(X)$ by \cite[Lemma 3.2]{tramel2017bridgeland}, allowing us to tilt  $\Coh^{\bar{H}}(X)$ to obtain the heart
$$
\mathcal{B}_{\bar{H},k}=\langle\mathcal{F}_{\bar{H},k}[1],\mathcal{T}_{\bar{H},k}\rangle.
$$
Notice that objects in $\mathcal{B}_{\bar{H},k}$ are still 2-term complexes sitting at degrees $-1$ and $0$.

\begin{prop}\label{prop:Oxtransfstable}
Let $\sigma^k_{z,\bar{B},\bar{H}} = (Z_{z,\bar{B},\bar{H}}, \mathcal{B}_{\bar{H},k})$ be as in Theorem \ref{TramelXiaSC}.  Then for any $x \in X$, the skyscraper sheaf $\OO_x$ is a $(\Phi \cdot \sigma^k_{z,\bar{B},\bar{H}})$-stable object.
\end{prop}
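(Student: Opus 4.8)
The plan is to use the $\Aut(D^b(X))$-action on $\Stab(X)$ to reduce the assertion to a stability statement inside the heart $\mathcal{B}_{\bar{H},k}$. Since $\Phi^{-1}\cong\whPhi[1]$ and the autoequivalence action preserves (semi)stability, $\OO_x$ is $(\Phi\cdot\sigma^k_{z,\bar{B},\bar{H}})$-stable if and only if $\Phi^{-1}(\OO_x)=\whPhi(\OO_x)[1]$ is $\sigma^k_{z,\bar{B},\bar{H}}$-stable. By the construction of $\whPhi$, the object $\mathcal{E}:=\whPhi(\OO_x)$ is a rank-one torsion-free sheaf on the fiber $f_x$ of $\pi$ through $x$, with $\ch(\mathcal{E})=(0,f,0)$; in particular $f_x$ is an integral curve distinct from $\Theta$. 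Writing $Z:=Z_{z,\bar{B},\bar{H}}$, by Remark \ref{remRealphase} it then suffices to show that $\mathcal{E}\in\mathcal{B}_{\bar{H},k}$ and that $\mathcal{E}$ is $Z$-stable there.

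Two preliminary observations. First, $\mathcal{E}\in\mathcal{B}_{\bar{H},k}$: being torsion it lies in $\mathcal{T}_{\bar{H}}\subseteq\Coh^{\bar{H}}(X)$, and since $\mathcal{E}$ is torsion-free on $f_x$ while each $\OO_\Theta(i)$ is torsion-free on $\Theta\neq f_x$, every morphism $\mathcal{E}\to\OO_\Theta(i)$ has image supported on the finite scheme $f_x\cap\Theta$ and hence vanishes; thus $\mathcal{E}\in\mathcal{T}_{\bar{H},k}\subseteq\mathcal{B}_{\bar{H},k}$. Second, since $\bar{H}\cdot\Theta=0$ we have $\Im Z=\bar{H}\ch_1$, which takes values in $\ZZ_{\geq 0}$ on objects of $\mathcal{B}_{\bar{H},k}$; substituting $\ch(\mathcal{E})=(0,f,0)$ gives $Z(\mathcal{E})=\lambda+i$, so $\Im Z(\mathcal{E})=1$ is the minimal positive value of $\Im Z$ on the heart.

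The core of the argument is to understand proper nonzero subobjects $A\hookrightarrow\mathcal{E}$ in $\mathcal{B}_{\bar{H},k}$, with quotient $C$. Passing to the long exact sequence of $\Coh^{\bar{H}}(X)$-cohomology of $0\to A\to\mathcal{E}\to C\to 0$ and using that $\mathcal{E}$ sits in degree $0$ and lies in $\mathcal{T}_{\bar{H},k}$, one gets $\mathcal{H}^{-1}_{\bar{H}}(A)=0$, so that $A\in\mathcal{T}_{\bar{H},k}$ is an honest sheaf, together with a four-term exact sequence $0\to F\to A\to\mathcal{E}\to\mathcal{H}^0_{\bar{H}}(C)\to 0$ in $\Coh(X)$ where $F=\mathcal{H}^{-1}_{\bar{H}}(C)\in\mathcal{F}_{\bar{H},k}=\langle\OO_\Theta(i):i\leq k\rangle$ is supported on $\Theta$. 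The image $I:=A/F\hookrightarrow\mathcal{E}$ cannot be zero, for otherwise $A\to\mathcal{E}$ would be the zero morphism between sheaves and $A$ would vanish; and since $\mathcal{E}$ is torsion-free on the integral curve $f_x$, $I$ is pure one-dimensional with $c_1(I)=f$. As $\bar{H}\ch_1(F)=0$ (since $F$ is supported on $\Theta$) and $\bar{H}\cdot f=1$, this forces $\Im Z(A)=\bar{H}\ch_1(A)=1=\Im Z(\mathcal{E})$.

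Granting this, for a proper $A$ the quotient $C=\mathcal{E}/A$ is a nonzero heart object with $\Im Z(C)=0$, so by positivity it has phase $1$ and $Z(C)\in\RR_{<0}$; hence $\Re Z(A)=\lambda-Z(C)>\lambda=\Re Z(\mathcal{E})$, and since $A$ and $\mathcal{E}$ have the same positive imaginary part, $\phi(A)<\phi(\mathcal{E})$. This proves $\mathcal{E}$ is $Z$-stable, hence the proposition. I expect the main obstacle to be the third step: carefully tracking cohomology through the two successive tilts defining $\mathcal{B}_{\bar{H},k}$ to determine the shape of an arbitrary subobject of $\mathcal{E}$, and in particular verifying that the only new feature introduced by the tilt along $\mathcal{F}_{\bar{H},k}$ is the $\Theta$-supported subsheaf $F$, which does not affect $\Im Z$.
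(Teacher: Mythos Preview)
Your proof is correct and follows essentially the same strategy as the paper: reduce via the $\Aut(D^b(X))$-action to showing $\whPhi\OO_x$ is stable in $\mathcal{B}_{\bar H,k}$, then exploit that $\Im Z=\bar H\ch_1$ is integral and that $\bar H\cdot\Theta=0$, so the $\Theta$-supported pieces arising from the second tilt do not affect $\Im Z$. The only organizational difference is that the paper case-splits on $\bar H\ch_1(M)\in\{0,1\}$ and rules out the zero case by a $\Hom$-vanishing argument, whereas you show directly that every nonzero subobject $A$ has $\bar H\ch_1(A)=1$ by exhibiting $A$ as an extension of a nonzero subsheaf $I\hookrightarrow\mathcal{E}$ by a $\Theta$-supported sheaf $F$; this is a minor streamlining, not a different method.

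One small point of presentation: the inference ``$\mathcal{H}^{-1}_{\bar H}(A)=0$, so $A\in\mathcal{T}_{\bar H,k}$ is an honest sheaf'' and the claim that the four-term sequence lives in $\Coh(X)$ are correct but deserve a word of justification. A priori $\mathcal{H}^{-1}_{\bar H}(A)=0$ only gives $A\in\Coh^{\bar H}(X)$. What makes everything a sheaf is that $F\in\mathcal{F}_{\bar H,k}$ is already a sheaf on $\Theta$, and any $\Coh^{\bar H}(X)$-subobject $I$ of the torsion sheaf $\mathcal{E}$ is itself a sheaf with $I\hookrightarrow\mathcal{E}$ a genuine sheaf monomorphism (the $\Coh(X)$-kernel is a torsion sheaf mapping to $I$ with zero composite to $\mathcal{E}$, hence zero since $I\hookrightarrow\mathcal{E}$ is mono in $\Coh^{\bar H}(X)$); then $A$, as an extension of two sheaves, is a sheaf. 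With this said, your argument goes through exactly as written.
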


\begin{proof}
From the definition of the action of $\Aut (D^b(X))$ on $\mathrm{Stab}(X)$, and noting $\Phi^{-1} \cong \whPhi[1]$, it suffices to show that all the sheaves $\whPhi \OO_x$ are $\sigma^k_{z,\bar{B},\bar{H}}$-stable of the same phase as $x$ runs through all the closed points of $X$.

Fix any $x\in X$, we first check that $\whPhi \OO_x \in \mathcal{B}_{\bar{H},k}$.  To begin with, since $\whPhi \OO_x$ is a torsion sheaf (in fact, it is supported on the fiber of $p$ containing $x$), it lies in the torsion class $\Tc_{\bar{H}}$.  Hence $\whPhi \OO_x$ lies in the first tilt $\Coh^{\bar{H}}(X)$.  Next, every $\OO_{\Theta} (i)$ is a pure 1-dimensional sheaf supported on $\Theta$, which is a section, and so $\Hom (\whPhi \OO_x, \OO_{\Theta} (i))=0$ for every $i \in \mathbb{Z}$.  This implies $\whPhi \OO_x \in \Tc_{\bar{H},k} \subset \Bc_{\bar{H},k}$.

Now take any $\Bc_{\bar{H},k}$-short exact sequence
\[
0 \to M \to \whPhi \OO_x \to N \to 0
\]
where $M, N \neq 0$.  Since $\Im Z_{z,\bar{B},\bar{H}} = \bar{H}\ch_1$ is nonnegative on  $\Bc_{\bar{H},k}$, we have
$$
0\leq \bar{H}\ch_1(M)\leq \bar{H}\ch_1(\whPhi\mathcal{O}_x)=\bar{H}f=1.
$$
In the case where $\bar{H}\ch_1(M)=1$, we have $\phi_{Z_{z,\bar{B},\bar{H}}}(N)=1>\phi_{Z_{z,\bar{B},\bar{H}}}(\whPhi \OO_x)$, so let us assume $\bar{H}\ch_1(M)=0$ from now on.

Consider the $\Bc_{\bar{H},k}$-short exact sequence
\[
0 \to \Hc^{-1}_{0}(M)[1] \to M \to \Hc^0_{0}(M) \to 0,
\]
where we write $\mathcal{H}^{i}_0$ for the cohomologies of an object with respect to the t-structure generated by the heart $\Coh^{\bar{H}}(X)$.  Again, since $\Im Z_{z,\bar{B},\bar{H}} = \bar{H}\ch_1$ is nonnegative on  $\Bc_{\bar{H},k}$, it follows that $\bar{H}\ch_1(\mathcal{H}^{-1}_0(M))=0=\bar{H}\ch_1(\Hc^0_{0}(M))$.  Next, write $T =  \Hc^0_{0}(M)$ and consider the short exact sequence in $\Coh^{\bar{H}}(X)$
$$
0 \to \mathcal{H}^{-1}(T)[1] \to T \to \mathcal{H}^0(T)\to 0
$$
where $\mathcal{H}^{-1}(T), \mathcal{H}^0(T)$ are coherent sheaves.

Since $\bar{H}\ch_1$ is nonnegative on $\Coh^{\bar{H}}(X)$, that $\bar{H}\ch_1(T)=0$ implies $\bar{H}\ch_1(\mathcal{H}^0(T))=0$, which in turn implies that $\mathcal{H}^0(T)$ is a torsion sheaf supported on $\Theta$ (here we use Lemma \ref{lem:TXprep1}).  Also, since $\Theta$ is a section and $\whPhi \OO_x$ is a pure 1-dimensional fiber sheaf \cite[Lemma 6.22]{FMNT}, we have $\Hom (\mathcal{H}^0(T),\whPhi \OO_x)=0$.  Hence
\begin{align*}
 \Hom_{D^b(X)} (M,\whPhi \OO_x) &\cong \Hom_{D^b(X)} (\mathcal{H}^0_0(M), \whPhi \OO_x) \text{\quad since $\mathcal{H}^{-1}_0(M)[1] \in D^{\leq -1}_{\Coh (X)}(X)$} \\
 &= \Hom_{D^b(X)}(T,\whPhi \OO_x) \\
  &\cong \Hom_{D^b(X)} (\mathcal{H}^0(T),\whPhi \OO_x) \text{\quad since $\mathcal{H}^{-1}(T)[1] \in D^{\leq -1}_{\Coh (X)}(X)$}\\
  &= 0
\end{align*}
contradicting the assumption that $M$ is a nonzero subobject of $\whPhi \OO_x$ in the heart $\Bc_{\bar{H},k}$.  Hence $\whPhi \OO_x$ must be $\sigma^k_{z,\bar{B},\bar{H}}$-stable.
\end{proof}

\begin{cor}\label{TXafterFM}
With $k, z, \bar{B}, \bar{H}$ as in Theorem \ref{TramelXiaSC}, there exist $a\in \mathbb{R}, g\in \widetilde{\mathrm{GL}}^{+}\!(2,\mathbb{R})$, and $\mathbb{R}$-divisors of the form  $B=p\Theta+qf$, $\omega=\Theta+bf$ with $b>2$ such that the stability condition $\sigma_{a, B, \omega}$ satisfies
\begin{equation}\label{eq:TXtoGeometric}
\Phi\cdot \sigma^k_{z,\bar{B},\bar{H}}=\sigma_{a,B,\omega}\cdot g.
\end{equation}
\end{cor}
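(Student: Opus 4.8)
The plan is to follow the template of the proof of Corollary \ref{cor:stabeq-1}, replacing Proposition \ref{prop:anaughtbound} by Proposition \ref{prop:Oxtransfstable}. First I would record that, in the notation of \eqref{geometriccentralcharge}, the Tramel--Xia central charge is $Z_{z,\bar B,\bar H}=Z_{\bar a,\bar B,\bar H}$ with $\bar a:=z-\lambda^{2}$, $\bar B=\bar p\Theta+\bar q f$ where $\bar p=\lambda$, $\bar q=0$, and $\bar\omega=\bar H=\Theta+2f$, i.e.\ $\bar b=2=e$. Since $z>\lambda^{2}$ we have $\bar a>0$, and $\bar b\ge e$ holds (with equality), so even though $\bar H$ is only nef the purely formal solution \eqref{usual:coordinates} of the central-charge equation $Z_{a,B,\omega}(\Phi(\_))=T\,Z_{\bar a,\bar B,\bar H}(\_)$ still applies. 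Specializing \eqref{usual:coordinates} yields
\[
T=\begin{pmatrix}-p & 1+p\bar p\\ -1 & \bar p\end{pmatrix},\qquad b=z+2,\qquad p=\frac{-\lambda}{z+1},\qquad q=\frac{-\lambda}{z+1}-\lambda-1,\qquad a=\frac{-\lambda^{2}}{z+1},
\]
with $\det T=1>0$; in particular $b=z+2>2=e$, so $\omega=\Theta+bf$ is ample. Note that $a$ is generally non-positive, but this causes no problem: $\sigma_{a,B,\omega}$ will be obtained below as the image of a genuine stability condition, rather than through Lemma \ref{lem:ZaBom}. Equivalently, the central charge of $\Phi\cdot\sigma^{k}_{z,\bar B,\bar H}$ is $T^{-1}Z_{a,B,\omega}$.

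The key input is then Proposition \ref{prop:Oxtransfstable}, which gives that every skyscraper $\OO_{x}$ is $(\Phi\cdot\sigma^{k}_{z,\bar B,\bar H})$-stable. To upgrade this to a common phase, I would recall from the proof of Proposition \ref{prop:Oxtransfstable} that $\whPhi\OO_{x}$ (which satisfies $\Phi^{-1}\OO_{x}\cong\whPhi\OO_{x}[1]$) is a sheaf lying in the heart $\mathcal{B}_{\bar H,k}$ and is $\sigma^{k}_{z,\bar B,\bar H}$-stable. Its Chern character is the $x$-independent class $(0,f,0)$, so $Z_{z,\bar B,\bar H}(\whPhi\OO_{x})=\lambda+i$ has positive imaginary part, forcing $\whPhi\OO_{x}$ to have phase $\phi_{0}:=\tfrac1\pi\arg(\lambda+i)\in(0,1)$ independent of $x$. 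Hence $\whPhi\OO_{x}[1]\cong\Phi^{-1}\OO_{x}$ has phase $\phi_{0}+1$ for all $x$, so all the $\OO_{x}$ are $(\Phi\cdot\sigma^{k}_{z,\bar B,\bar H})$-stable of the single phase $\phi_{0}+1$; thus $\Phi\cdot\sigma^{k}_{z,\bar B,\bar H}$ is geometric.

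Finally, exactly as in Corollary \ref{cor:stabeq-1}, I would choose a lift $g=(T,f)\in\widetilde{\mathrm{GL}}^{+}\!(2,\mathbb{R})$ of $T$ for which the $\OO_{x}$ are $(\Phi\cdot\sigma^{k}_{z,\bar B,\bar H}\cdot g^{-1})$-stable of phase $1$; this is consistent with the circle map of $T$ because $Z_{a,B,\omega}(\OO_{x})=-1$. The stability condition $\Phi\cdot\sigma^{k}_{z,\bar B,\bar H}\cdot g^{-1}$ then has central charge $Z_{a,B,\omega}$ (by the first paragraph together with the definition of the $\widetilde{\mathrm{GL}}^{+}\!(2,\mathbb{R})$-action), it is geometric (the action preserves geometricity), its skyscraper sheaves have phase $1$, and $\Im Z_{a,B,\omega}=\omega\ch_{1}^{B}$ with $\omega$ ample. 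Lemma \ref{lem:MSLem6-20} then identifies its heart with $\Coh^{B,\omega}(X)$, giving $\Phi\cdot\sigma^{k}_{z,\bar B,\bar H}\cdot g^{-1}=(Z_{a,B,\omega},\Coh^{B,\omega}(X))=\sigma_{a,B,\omega}$, which rearranges to \eqref{eq:TXtoGeometric}. The only substantive step is Proposition \ref{prop:Oxtransfstable}, already established; of the remaining bookkeeping, the one point to be careful about is that the output polarization $\omega=\Theta+(z+2)f$ stays ample (since $z+2>e$) even though the input divisor $\bar H=\Theta+2f$ is only nef, and that the (possibly negative) value of $a$ is harmless.
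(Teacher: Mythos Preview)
Your proof is correct and follows essentially the same approach as the paper: identify $Z_{z,\bar B,\bar H}$ with $Z_{\bar a,\bar B,\bar\omega}$ for $\bar a=z-\lambda^2$, $\bar\omega=\bar H$, solve the central-charge equation via \eqref{usual:coordinates}, and then combine geometricity (Proposition~\ref{prop:Oxtransfstable}) with Lemma~\ref{lem:MSLem6-20} to identify the heart. Your treatment is in fact slightly more careful than the paper's: the paper cites Proposition~\ref{isoUpsilon}, whose statement assumes $a>0$, whereas you correctly observe that $a=-\lambda^2/(z+1)\le 0$ and bypass this by defining $\sigma_{a,B,\omega}$ as the pair $(Z_{a,B,\omega},\Coh^{B,\omega}(X))$ and deducing it is a stability condition \emph{a posteriori} from the equation $\Phi\cdot\sigma^k_{z,\bar B,\bar H}\cdot g^{-1}=\sigma_{a,B,\omega}$.
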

\begin{proof}
We know $\Phi\cdot \sigma^k_{z,\bar{B},\bar{H}}$ is a geometric stability condition from Proposition \ref{prop:Oxtransfstable}.  Therefore,  by Proposition \ref{isoUpsilon} it is enough to show that there is a solution to the central charge equation associated to \eqref{eq:TXtoGeometric}, i.e.\ there exists a matrix $T \in \mathrm{GL}^+\!(2,\mathbb{R})$ such that 
$$
Z_{a,B,\omega}(\Phi(E))=TZ_{z,\bar{B},\bar{H}}(E)  \text{\quad for all $E \in D^b(X)$}.
$$
Notice that the central charge $Z_{z,\bar{B},\bar{H}}$ can be written in the form $Z_{\bar{a}, \bar{B}, \bar{\omega}}$ with $\bar{a}=z-\lambda^2$, $\bar{B}=\lambda \Theta$, and  $\bar{\omega}=\bar{H}=\Theta + 2f$. Setting 
$$
    \bar{U}=2(z-\lambda^2)+e=2(z-\lambda^2+1)\ \ \text{and}\ \ \bar{V}=\bar{H}^2=2,
$$
the solutions \eqref{eq:V}, \eqref{eq:p}, \eqref{eq:q}, and \eqref{eq:U} read
\begin{align*}
    V &=2z+2\\
    p&=-\frac{\lambda}{z+1}\\
    q&=-\lambda\left(\frac{1}{z+1}+1\right)-1\\
    U&=2\left(1-\frac{\lambda^2}{z+1}\right).
\end{align*}
Thus we obtain
$$
\omega=\Theta+(z+2)f,\ \ a=-\frac{\lambda^2}{z+1},\ \text{and}\ B=-\frac{\lambda}{z+1}(\Theta+f)-(\lambda+1)f,
$$
and from \eqref{usual:coordinates}
$$
T=\begin{pmatrix}\frac{\lambda}{z+1} & 1-\frac{\lambda^2}{z+1}\\ -1 & \lambda \end{pmatrix}.
$$
Finally, notice that $\omega$ is ample since  $z>0$ by assumption.
\end{proof}

\begin{rem}
Notice that the geometric stability condition $\sigma_{a,B,\omega}$ obtained in Corollary \ref{TXafterFM} satisfies
$$
-1+\frac{1}{z+1}<a<0,
$$
which is very special for our type of K3 surfaces and deviates from the usual assumption of $a>0$ in Bridgeland stability conditions. In fact, the positivity property of the central charge $Z_{a,B,\omega}$ on the heart $\mathrm{Coh}^{B,\omega}$ does not follow from the usual Bogomolov inequality for $a<0$.  On a K3 surface, however, we do have a stronger version of the Bogomolov-Gieseker inequaity, which says that for a slope stable torsion-free sheaf $A$, we have \cite[Section 6]{ABL}
\[
  \ch_2(A) \leq \frac{\ch_1(A)^2}{2\ch_0(A)} - \ch_0(A) + \frac{1}{\ch_0(A)}
\]
or equivalently
\[
  \Delta (A) \geq 2\ch_0(A)^2-2.
\]
Since $\Delta (A)$ is invariant under twising the Chern character by a 
$B$-field, we also have
\begin{equation}\label{eq:strongBGtwist}
 \ch_2^B(A) \leq \frac{\ch_1^B(A)^2}{2\ch_0(A)} - \ch_0(A) + \frac{1}{\ch_0(A)}.
\end{equation}
Under certain conditions on $B$ and $\omega$, we can use this stronger Bogomolov-Gieseker inequality to produce Bridgeland stability conditions $\sigma_{a, B, \omega}$ with $a<0$ as in the lemma below.

\end{rem}
\begin{lem}\label{CCwithnegativea}
Let $X$ be a K3 surface and $B,\omega\in\mathrm{NS}(X)\otimes\mathbb{Q}$ with $\omega$ integral and ample, and such that $B\omega\notin \mathbb{Z}$. Then 
$$
Z_{a,B,\omega}=-\ch_2^B+a\ch_0+i\omega\ch_1^B
$$
satisfies the positivity property for a central charge on $\Coh^{B,\omega}(X)$ for every $a>-3/4$.  If we also have $a \in \mathbb{Q}$ and $B \in \mathrm{NS}(X)\otimes \mathbb{Q}$, then $Z_{a, B, \omega}$ is a stability function on $\Coh^{B,\omega}$ with the HN property.
\end{lem}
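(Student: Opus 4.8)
The plan is to verify the positivity axiom directly and then obtain the Harder--Narasimhan property from a standard discreteness criterion. Since $\Im Z_{a,B,\omega}=\omega\ch_1^B$, the inequality $\Im Z_{a,B,\omega}\geq 0$ on $\Coh^{B,\omega}(X)$ holds for the usual reason: every object of $\Coh^{B,\omega}(X)$ is an extension of a sheaf in $\Tc_{B,\omega}$ (where $\omega\ch_1^B\geq 0$) by a shift $F[1]$ with $F\in\Fc_{B,\omega}$ (where $\omega\ch_1^B(F[1])=-\omega\ch_1^B(F)\geq 0$ because $\mu_{B,\omega}(F)\leq 0$). So the real content is: if $0\neq E\in\Coh^{B,\omega}(X)$ has $\omega\ch_1^B(E)=0$, then $\Re Z_{a,B,\omega}(E)=-\ch_2^B(E)+a\ch_0(E)<0$.

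First I would reduce this to ``simple'' objects with vanishing imaginary part. From the triangle $\Hc^{-1}(E)[1]\to E\to\Hc^0(E)$, with $\Hc^{-1}(E)\in\Fc_{B,\omega}$ and $\Hc^0(E)\in\Tc_{B,\omega}$, the vanishing $\omega\ch_1^B(E)=0$ forces $\omega\ch_1^B(\Hc^0(E))=0=\omega\ch_1^B(\Hc^{-1}(E))$; since $\omega$ is ample this means $\Hc^0(E)$ is a zero-dimensional torsion sheaf and $\Hc^{-1}(E)$ is a $\mu_{B,\omega}$-semistable torsion-free sheaf of slope $0$. Hence $E$ is a finite iterated extension of skyscraper sheaves $\OO_x$ and of shifts $G[1]$ with $G$ a $\mu_{B,\omega}$-stable torsion-free sheaf of slope $0$ (a Jordan--Hölder factor of $\Hc^{-1}(E)$). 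Since $\Re Z_{a,B,\omega}$ is additive on short exact sequences, it suffices to prove $\Re Z_{a,B,\omega}<0$ on these two types of object; and $\Re Z_{a,B,\omega}(\OO_x)=-1<0$ needs nothing.

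The crux is the case $E=G[1]$ with $G$ slope-stable torsion-free and $\omega\ch_1^B(G)=0$; here $\Re Z_{a,B,\omega}(G[1])=\ch_2^B(G)-a\,\ch_0(G)$, so I must show $\ch_2^B(G)<a\,\ch_0(G)$. First I would exclude $\ch_0(G)=1$: then $\ch_1^B(G)=\ch_1(G)-B$ with $\ch_1(G)\in\mathrm{NS}(X)$, so $\omega\ch_1^B(G)=\omega\ch_1(G)-\omega B\notin\mathbb{Z}$ (as $\omega\ch_1(G)\in\mathbb{Z}$ but $\omega B\notin\mathbb{Z}$), hence it cannot vanish; thus $r:=\ch_0(G)\geq 2$. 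Since $\omega$ is ample and $\omega\ch_1^B(G)=0$, the Hodge index theorem gives $(\ch_1^B(G))^2\leq 0$, and the strengthened Bogomolov--Gieseker inequality \eqref{eq:strongBGtwist} for slope-stable sheaves on the K3 surface $X$ then yields
$$
\ch_2^B(G)\ \leq\ \frac{(\ch_1^B(G))^2}{2r}-r+\frac1r\ \leq\ -r+\frac1r\ \leq\ -\tfrac34 r\ <\ a r ,
$$
where the third inequality uses $r\geq 2$ (with equality at $r=2$) and the last uses $a>-\tfrac34$ --- this is exactly where the bound $-\tfrac34$ enters. Summing this strict inequality over the slope-stable Jordan--Hölder factors of a general $\mu_{B,\omega}$-semistable $G$ of slope $0$ (each of rank $\geq 2$ by the $\ch_0(G)=1$ discussion) completes the proof of positivity.

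For the second assertion only the HN property remains. I would invoke the standard criterion that a stability function on a Noetherian abelian category whose image is a discrete subgroup of $\mathbb{C}$ has the HN property \cite{StabTC}; the heart $\Coh^{B,\omega}(X)$ is Noetherian (a well-known feature of these tilted hearts on surfaces, see \cite{MSlec}), and under the hypotheses $a\in\mathbb{Q}$, $B\in\mathrm{NS}(X)\otimes\mathbb{Q}$, $\omega$ integral, both $\Im Z_{a,B,\omega}=\omega\ch_1^B$ and $\Re Z_{a,B,\omega}=-\ch_2^B+a\ch_0$ take values in fixed subgroups $\tfrac1N\mathbb{Z}$, $\tfrac1M\mathbb{Z}$ of $\mathbb{R}$ (with $N,M$ depending only on $a,B,\omega$), so the image of $Z_{a,B,\omega}$ is discrete and the criterion applies. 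The main obstacle is precisely the slope-zero, rank-$\geq 2$ case above: one must notice that $\ch_0(G)=1$ is ruled out by $\omega B\notin\mathbb{Z}$, and that for $\ch_0(G)\geq 2$ the arithmetic of the strong Bogomolov--Gieseker inequality lines up exactly with $a>-\tfrac34$; everything else is routine.
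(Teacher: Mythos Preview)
Your argument is correct and follows essentially the same route as the paper: reduce to $\mu_{B,\omega}$-stable torsion-free sheaves $G$ with $\omega\ch_1^B(G)=0$, use $B\omega\notin\mathbb{Z}$ to exclude rank one, then combine the Hodge index theorem with the strengthened Bogomolov--Gieseker inequality \eqref{eq:strongBGtwist} to conclude for rank $\geq 2$; the Harder--Narasimhan property is deduced from noetherianity of the tilted heart together with discreteness of the image of $Z_{a,B,\omega}$. The only difference is cosmetic: you spell out the reduction to simple pieces (skyscrapers and stable $G[1]$) explicitly, whereas the paper invokes it as ``the standard argument'', and for HN the paper cites \cite[Proposition 3.4]{LZ2} while you appeal directly to the discreteness criterion.
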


\begin{proof}
Following the standard argument, it suffices  to check that if $E$ is a slope stable torsion-free  sheaf with $\omega\ch_1^B(E)=0$, then $\Re Z_{a,B,\omega}(E[1])<0$.  Notice that we can assume $\ch_0(E)>1$, for otherwise the assumption $\omega \ch_1^B(E)=0$ gives
$$
\omega\ch_1(E)=B\omega
$$
where the left-hand side is an integer, contradicting our assumption that $B\omega$ is not an integer.  We now have 
\begin{align*}
    \Re Z_{a,B,\omega}(E[1]) &= -\ch_2^B(E[1]) + a\ch_0(E[1]) \\
    &= \ch_2^B(E) - a\ch_0(E) \\
    &\leq \frac{(\ch_1(E)^B)^2}{2\ch_0(E)} - \ch_0(E) + \frac{1}{\ch_0(E)} -a\ch_0(E) \text{\quad by \eqref{eq:strongBGtwist}} \\
    &\leq -\ch_0(E) + \frac{1}{\ch_0(E)} -a\ch_0(E) \text{\quad by the Hodge Index Theorem} \\
    &= -(a+1)\ch_0(E) +  \frac{1}{\ch_0(E)}.
\end{align*}
Now it is easy to see that when $a>-3/4$, we have 
\[
  -(a+1)\ch_0(E) + \frac{1}{\ch_0(E)} < 0
\]
whenever $\ch_0(E) \geq 2$, giving us $\Re Z_{a,B,\omega}(E[1]) < 0$.

To see the second claim, note that when $a, B$ are both over $\mathbb{Q}$, the usual Bridgeland stability condition $\sigma_{B,\omega}$ is algebraic in the sense of \cite[Definition 2.5]{TODA20082736}, and so the heart $\Coh^{B,\omega}$ is noetherian.  Since the image of $Z_{a, B, \omega}$ is now contained in $\mathbb{Q} \oplus \mathbb{Q}i$, its HN property as a stability function on $\Coh^{B,\omega}$ follows from \cite[Proposition 3.4]{LZ2}.
\end{proof}

\begin{rem}
The central charge $Z_{a,B,\omega}$ obtained in Corollary \ref{TXafterFM} satisfies the hypothesis of Lemma \ref{CCwithnegativea} when $z\in\mathbb{Z}$ and $2\lambda\notin \mathbb{Z}$ since
$$
B\omega=-2\lambda-1.
$$
This suggests that  for general values of $\lambda$ and $z$, the positivity of $Z_{a,B,\omega}$  in Corollary \ref{TXafterFM} is rather a consequence of our solution to \eqref{eq:TXtoGeometric} using the relative Fourier-Mukai transform.  This demonstrates the idea that autoequivalences can be used to establish the positivity of the central charge in a Bridgeland stability condition where available Bogomolov-Gieseker inequalities are not sufficient.
\end{rem}

\begin{cor}\label{projectivityTXcomponent}
Let $v$ be a fixed Chern character, and  suppose  $\sigma^k_{z,\bar{B},\bar{H}}$ is a stability condition from Theorem \ref{TramelXiaSC} that   is contained in a chamber for $v$. Then there is a projective coarse moduli space parametrizing families of S-equivalence classes of $\sigma^k_{z,\bar{B},\bar{H}}$-semistable objects of Chern character $v$.
\end{cor}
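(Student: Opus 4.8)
The plan is to transport the question across the relative Fourier--Mukai transform $\Phi$ and then reduce it to the known good behaviour of Bridgeland moduli on K3 surfaces.

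First I would apply Corollary~\ref{TXafterFM}: it gives $\Phi\cdot\sigma^k_{z,\bar{B},\bar{H}}=\sigma_{a,B,\omega}\cdot g$ for the explicit data there (with $\omega=\Theta+(z+2)f$ ample, $a=-\lambda^2/(z+1)\leq 0$, $B=-\tfrac{\lambda}{z+1}(\Theta+f)-(\lambda+1)f$), and its proof exhibits a matrix $T\in\mathrm{GL}^+\!(2,\mathbb{R})$ with $Z_{a,B,\omega}(\Phi(\_))=TZ_{z,\bar{B},\bar{H}}(\_)$. Since $\Phi\cdot\sigma^k_{z,\bar{B},\bar{H}}$ is geometric by Proposition~\ref{prop:Oxtransfstable}, Proposition~\ref{isoUpsilon} (with $\Upsilon=\Phi$) applies and yields an integer $j$ together with an isomorphism of good moduli spaces
\[
  \mathcal{M}_{\sigma^k_{z,\bar{B},\bar{H}}}(v)\;\cong\;\mathcal{M}_{a,B,\omega}(w),\qquad w:=(-1)^j\Phi^{\ch}(v).
\]
Because $\sigma^k_{z,\bar{B},\bar{H}}$ lies in a chamber for $v$ and $\Phi$ is an autoequivalence (carrying walls to walls, with the $\widetilde{\mathrm{GL}}^+\!(2,\mathbb{R})$-action and shifts not affecting genericity), $\sigma_{a,B,\omega}$ lies in a chamber for $w$. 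Perturbing $(\lambda,z)$ to a nearby rational point --- harmless, since the Tramel--Xia constraints, the ampleness of $\omega$, and membership in the chamber for $v$ are all open conditions --- we may assume $a,B,\omega$ are rational, so that $\mathcal{M}_{a,B,\omega}(w)$ is a proper algebraic space by the Remark following Proposition~\ref{isoUpsilon}. It now suffices to prove $\mathcal{M}_{a,B,\omega}(w)$ is projective.

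The next step is to place $\sigma_{a,B,\omega}$ in the distinguished component $\Stab^\dagger(X)$. I claim the ray $\{\sigma_{a'',B,\omega}\}_{a''\geq a}$ consists of genuine Bridgeland stability conditions, all with heart $\Coh^{B,\omega}(X)$. For $a''>0$ this is Lemma~\ref{lem:ZaBom}; for $a''\in[a,0]$ only positivity of the central charge is in question, and it passes upward from $a$: the only heart objects that could violate positivity are shifts $F[1]$ of $\mu_\omega$-semistable torsion-free sheaves $F$ with $\omega\ch_1^B(F)=0$, for which positivity of $Z_{a'',B,\omega}$ reads $\ch_2^B(F)/\ch_0(F)<a''$, and this inequality holds at $a$ (because $\sigma_{a,B,\omega}=\Phi\cdot\sigma^k_{z,\bar{B},\bar{H}}\cdot g^{-1}$ is a stability condition), hence a fortiori for all $a''\geq a$. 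The Harder--Narasimhan property holds along the ray by noetherianity of $\Coh^{B,\omega}(X)$ for rational $B,\omega$, as in the proof of Lemma~\ref{CCwithnegativea}, and the support property propagates with a uniform discriminant-type quadratic form as in Remark~\ref{rem:BGineq}. The ray thus connects $\sigma_{a,B,\omega}$ to $\sigma_{a'',B,\omega}$ with $a''\gg 0$, which by Lemma~\ref{lem:ZaBom} is $\widetilde{\mathrm{GL}}^+\!(2,\mathbb{R})$-equivalent to a stability condition of the form $\sigma_{B,t\omega}$ in Bridgeland's distinguished region \cite{SCK3}; hence $\sigma_{a,B,\omega}\in\Stab^\dagger(X)$, and it is $w$-generic.

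Finally, for projectivity: travelling along the ray to $a''\gg 0$ we cross only finitely many walls for $w$ (by local finiteness, there being none beyond the Gieseker bound of Remark~\ref{largevolumelimit}), and at $a''\gg 0$ the moduli space $\mathcal{M}_{a'',B,\omega}(w)$ coincides with a (twisted) Gieseker moduli space --- of torsion-free sheaves if $\ch_0(w)\neq 0$ (Remark~\ref{largevolumelimit}, using Proposition~\ref{duality} when $\ch_0(w)<0$) and of lower-dimensional sheaves otherwise (Lemma~\ref{one dim real estimates}) --- which is a projective scheme by Matsuki--Wentworth~\cite{MW}. Since $X$ is a K3 surface and each $\sigma_{a'',B,\omega}$ on the ray, away from the finitely many walls, is a $w$-generic stability condition in $\Stab^\dagger(X)$, the positivity results of \cite[Theorem~1.1]{BayerMacriMMP} make all the $\mathcal{M}_{a'',B,\omega}(w)$ projective and related by birational wall-crossing maps; anchoring the chain at the projective Gieseker endpoint, backward induction gives that $\mathcal{M}_{a,B,\omega}(w)$, hence $\mathcal{M}_{\sigma^k_{z,\bar{B},\bar{H}}}(v)$, is projective. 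The main obstacle is the second step: controlling $\sigma_{a,B,\omega}$ despite $a\leq 0$ placing it outside the standard chamber, and certifying it lies in $\Stab^\dagger(X)$. The monotonicity trick above sidesteps a direct appeal to the strong Bogomolov--Gieseker inequality of Lemma~\ref{CCwithnegativea}, but that inequality is the phenomenon underlying it; and accommodating a possibly non-primitive $w$ (genuine S-equivalence) is what forces the detour through the Gieseker/Matsuki--Wentworth comparison rather than a bare appeal to \cite{BayerMacriMMP}.
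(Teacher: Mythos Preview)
Your overall strategy---transport the Tramel--Xia stability condition across $\Phi$ to a geometric $\sigma_{a,B,\omega}$, check that genericity is preserved, and then invoke Bayer--Macr\`i's K3 projectivity machinery---is exactly the paper's. The paper, however, is far more direct: after Corollary~\ref{TXafterFM} it simply cites \cite[Theorem~1.3]{bayer2014projectivity}, which says that for any $v$-generic $\sigma\in\Stab^\dagger(X)$ on a K3 surface the coarse moduli $M_\sigma(v)$ exists as a normal irreducible projective variety with $\mathbb{Q}$-factorial singularities. No rational perturbation, no explicit ray into $\Stab^\dagger(X)$, no wall-crossing.

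Two comments on your detours. First, once $\sigma_{a,B,\omega}$ is known to be geometric (Corollary~\ref{TXafterFM}), Bridgeland's classification of geometric stability conditions on K3 surfaces \cite{SCK3} already places it, up to the $\widetilde{\mathrm{GL}}^+\!(2,\mathbb{R})$-action, inside $U(X)\subset\Stab^\dagger(X)$; your ray argument is a pleasant explicit check but not logically required. Second, and more seriously, your closing paragraph has a gap as written: you invoke \cite[Theorem~1.1]{BayerMacriMMP}, which supplies \emph{birational} wall-crossing maps, and then try to pull projectivity back by ``backward induction'' from the Gieseker endpoint---but being birational to a projective variety does not imply projectivity. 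The correct reference is \cite[Theorem~1.3]{bayer2014projectivity} (the positivity/nef-divisor paper, not the MMP paper), which gives projectivity outright for each generic $\sigma\in\Stab^\dagger(X)$; with that in hand the wall-crossing chain and the Gieseker anchor become redundant.
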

\begin{proof}
By \cite[Theorem 1.3]{bayer2014projectivity} we know that if $\sigma_{a,B,\omega}$ is generic with respect to the Chern character $\Phi(v)$, then the coarse moduli space $\mathcal{M}_{a,B,\omega}(\Phi(v))$ exists as a normal irreducible projective variety with $\mathbb{Q}$-factorial singularities, in which case the same would be true for the moduli space of $\sigma^k_{z,\bar{B},\bar{H}}$-semistable objects by Corollary \ref{TXafterFM}. The genericity of $\sigma_{a,B,\omega}$ with respect to $\Phi(v)$ is equivalent to the genericity of $\sigma^k_{z,\bar{B},\bar{H}}$ with respect to $v$, since the action of $\Phi$ sends short exact sequences to short exact sequences and preserves inequalities of phases.
\end{proof}


\appendix

\section{Computations}\label{computations}

Our goal is to find conditions on the parameters $a, B, \omega$ and $\bar{a}, \bar{B}, \bar{\omega}$, along with   a matrix $W$ such that the relation
\begin{equation}\label{eq:cenchareq}
  Z_{a,B,\omega}(\Phi E) = W Z_{\bar{a},\bar{B},\bar{\omega}}(E)
\end{equation}
holds for all $E \in D^b(X)$.  The computation can be slightly simplified if we set
\[
  Z_{a,B,\omega}' (E) = \begin{pmatrix} 1 & -fB \\ 0 & 1 \end{pmatrix} Z_{a,B,\omega}(E)
\]
for any $a, B, \omega$ and consider the equivalent relation
\begin{equation}\label{eq:rel1}
    Z_{a,B,\omega}'(\Phi E) = W' Z_{\bar{a},\bar{B},\bar{\omega}}'(E)
\end{equation}
for some $W'\in GL(2,\mathbb{R})$.  Recall our notation
\begin{gather*}
  \bar{\omega}= \Theta + \bar{b}f, \text{\quad} \bar{B} = \bar{p} \Theta + \bar{q} f, \\
  \omega = \Theta + bf, \text{\quad} B = p\Theta + qf.
\end{gather*}
From the computation in \cite[8.5]{Lo20} (where the notation $Z_{a,b,B}, Z_{a,b,B}'$ correspond to our notation $Z_{a,B,\omega}, Z_{a,B,\omega}'$, respectively), we know that the relation \eqref{eq:rel1} holds for all $E \in D^b(X)$ if and only if $W' = \begin{pmatrix} 0 & 1 \\ -1 & 0 \end{pmatrix}$ and the following equations are satisfied:
\begin{align}
 - \bar{b}\bar{p} +  \bar{q}  &= \tfrac{e}{2} +  (b-e)p + q  \label{eq:cons1} \\
  \bar{a} + (\bar{b} - \tfrac{e}{2})\bar{p}^2  &= b - e  \label{eq:cons2} \\
  \bar{b}-e &=  a + (b- \tfrac{e}{2})  p^2   \label{eq:cons3} \\
   (\bar{b}-e) \bar{p}+\bar{q}  &= \tfrac{e}{2}  - bp+ q \label{eq:cons4}.
\end{align}
Note the symmetry between \eqref{eq:cons2} and \eqref{eq:cons3}, and the almost-symmetry between \eqref{eq:cons1} and \eqref{eq:cons4}.  Also, with the above choice of $W'$ and under these constraints, we obtain a solution to \eqref{eq:cenchareq} with
\[
  W = \begin{pmatrix} 1 & fB \\ 0 & 1 \end{pmatrix} \begin{pmatrix} 0 & 1 \\ -1 & 0 \end{pmatrix} \begin{pmatrix} 1 & -f\bar{B} \\ 0 & 1 \end{pmatrix} = \begin{pmatrix} -p & 1+p\bar{p} \\ -1 & \bar{p} \end{pmatrix}.
\]

Now let us fix $\bar{a}, \bar{b}, \bar{p}, \bar{q}$ as input, and solve for $a, b, p, q$  in \eqref{eq:cons1}-\eqref{eq:cons4} as output.  Let us also assume  $\bar{a}>0$ and $\bar{b} \geq e$.  We can immediately solve for $b$ from \eqref{eq:cons2}:
\begin{equation}\label{eq:cons5}
  b = \bar{a} + (\bar{b} - \tfrac{e}{2})\bar{p}^2 + e.
\end{equation}
From this we see  $b>e$.  Next, \eqref{eq:cons1} and \eqref{eq:cons4} together can be written as the matrix equation
\[
  \begin{pmatrix} b-e & 1 \\ -b & 1 \end{pmatrix}\begin{pmatrix} p \\ q \end{pmatrix} = \begin{pmatrix}  - \bar{p}\bar{b} + \bar{q} - \tfrac{e}{2} \\
  \bar{p}(\bar{b} - e)  + \bar{q} -\tfrac{e}{2} \end{pmatrix}.
\]
where $\det{\begin{pmatrix} b-e & 1 \\ -b & 1 \end{pmatrix}} = 2b -e >0$. Hence we can solve for $p, q$ as 
\begin{align}
\begin{pmatrix} p \\ q \end{pmatrix} &=\begin{pmatrix} b-e & 1 \\ -b & 1 \end{pmatrix}^{-1} \begin{pmatrix}  - \bar{p}\bar{b} + \bar{q} - \tfrac{e}{2} \\
  \bar{p}(\bar{b} - e)  + \bar{q} -\tfrac{e}{2} \end{pmatrix} \notag \\
  &= \tfrac{1}{2b -e} \begin{pmatrix} 1 & -1 \\ b & b-e \end{pmatrix} \begin{pmatrix}  - \bar{p}\bar{b} + \bar{q} - \tfrac{e}{2} \\
  \bar{p}(\bar{b} - e)  + \bar{q} -\tfrac{e}{2} \end{pmatrix}\notag\\
  &= \tfrac{1}{2b -e} \begin{pmatrix} -\bar{p}(2\bar{b}-e) \\
  (2b-e)(\bar{q}-\tfrac{e}{2}) + e\bar{p}(-b-\bar{b} + e)  \label{eq:pq} \end{pmatrix}.
\end{align}
Lastly, from \eqref{eq:cons3} we have
\begin{equation}\label{eq:asol1}
 a = \bar{b} - e - (b-\tfrac{e}{2})p^2.
\end{equation}
We have now shown that   $a, b, p, q$ can all be solved for in terms of $\bar{a}, \bar{b}, \bar{p}, \bar{q}$.  That is, we can think of $\bar{a}, \bar{b}, \bar{p}, \bar{q}$ (the `barred' parameters) as inputs and $a, b, p, q$ (the `unbarred' parameters) as outputs when solving an equation of the form \eqref{eq:cenchareq}.  

\begin{rem}\label{rem:A1}
If $\bar{a},\bar{b}, \bar{p}, \bar{q} \in \mathbb{Q}$, then we also have $a, b, p, q \in \mathbb{Q}$; this is needed in Theorem  \ref{newisobyPhi}, for instance.
\end{rem}

Nonetheless, it can be useful to mix the barred and unbarred parameters to exploit the symmetry.  For instance, we have
\begin{equation}\label{eq:passoc2}
    p(2b-e)=-\bar{p}(2\bar{b}-e)
\end{equation}
and
\begin{align}
    q&=\bar{q}-\frac{e}{2}-e\bar{p}\frac{b-\frac{e}{2}}{2b-e}-e\bar{p}\frac{\bar{b}-\frac{e}{2}}{2b-e}\\
    &=\bar{q}+\frac{e}{2}(p-\bar{p}-1) \notag
\end{align}
where the last equality holds because we have $\bar{p}(\bar{b}-\tfrac{e}{2})=-\tfrac{1}{2}p(2b-e)$ from \eqref{eq:passoc2}.  We can also rewrite  \eqref{eq:asol1} as
\begin{align}
a &= \bar{b} - e - \frac{1}{2} \frac{(2\bar{b}-e)^2}{(2b-e)}\bar{p}^2   \text{\qquad by rewriting $p$ using  \eqref{eq:pq}} \notag\\
&= \bar{b}-e - \frac{(2\bar{b}-e)^2 \bar{p}^2}{2(2\bar{a}+(2\bar{b}-e)\bar{p}^2+e)} \text{\qquad by rewriting $b$ using \eqref{eq:cons5}}. \label{eq:cons6}
\end{align}

\begin{rem}
When $\bar{b}$ and $\bar{p}$ are fixed and as $\bar{a}\to \infty$, we see from \eqref{eq:cons6} that  $a \to (\bar{b}-e)$.  In particular, when $\bar{b}>e$, for $\bar{a}\gg 0$ we have $a>0$. In terms of the volume and volume-like coordinates introduced in Section \ref{preservation:gieseker}, this says that as $\bar{U}\rightarrow \infty$, we have  $U\rightarrow \bar{V}$.
\end{rem}

\bibliographystyle{hep}
\bibliography{refs-perpetual.bib}

\end{document}